\newcommand{\bC}{\mathbb{C}}
\newcommand{\bP}{\mathbb{P}}
\newcommand{\bQ}{\mathbb{Q}}
\newcommand{\bR}{\mathbb{R}}
\newcommand{\bZ}{\mathbb{Z}}
\newcommand{\cD}{{\mathcal{D}}}
\newcommand{\cL}{{\mathcal{L}}}
\newcommand{\cO}{{\mathcal{O}}}
\newcommand{\et}{\quad\mbox{and}\quad}
\newcommand{\GL}{\mathrm{GL}}
\newcommand{\lambdahat}{\hat{\lambda}}
\newcommand{\norm}[1]{\|\hspace*{2pt}#1\hspace*{1pt}\|}
\newcommand{\Qmult}{\bQ^{\times}}
\newcommand{\Qspan}[1]{\langle\,#1\rangle_\bQ}
\newcommand{\tE}{{\tilde{E}}}
\newcommand{\tq}{\tilde{q}}
\newcommand{\teta}{{\tilde{\eta}}}
\newcommand{\tueta}{{\tilde{\ueta}}}
\newcommand{\tux}{{\tilde{\ux}}}
\newcommand{\tuy}{{\tilde{\uy}}}
\newcommand{\txi}{{\tilde{\xi}}}
\newcommand{\tZ}{\tilde{Z}}
\newcommand{\tZli}{\tilde{Z}^{\mathrm{li}}}
\newcommand{\ue}{\mathbf{e}}
\newcommand{\uu}{\mathbf{u}}
\newcommand{\uv}{\mathbf{v}}
\newcommand{\uw}{\mathbf{w}}
\newcommand{\ux}{\mathbf{x}}
\newcommand{\uy}{\mathbf{y}}
\newcommand{\uz}{\mathbf{z}}
\newcommand{\ueta}{{\boldsymbol{\eta}}}
\newcommand{\uxi}{{\boldsymbol{\xi}}}
\newcommand{\disp}{\displaystyle}
\newcommand{\Zli}{Z^{\mathrm{li}}}
\DeclareMathOperator{\dist}{dist}
\newtheorem{theorem}{Theorem}[section]
\newtheorem{lemma}[theorem]{Lemma}
\newtheorem{proposition}[theorem]{Proposition}
\newtheorem{cor}[theorem]{Corollary}
\newtheorem{lemme}[theorem]{Lemme}
\theoremstyle{remark}
\numberwithin{equation}{section}
\begin{document}

\title[Rational approximation on quadratic hypersurfaces]
{Rational approximation to real points\\ on quadratic hypersurfaces}
\author{Anthony Po\"els}
\address{
   D\'epartement de Math\'ematiques\\
   Universit\'e d'Ottawa\\
   150 Louis Pasteur\\
   Ottawa, Ontario K1N 6N5, Canada}
\email{anthony.poels@uottawa.ca}
\author{Damien Roy}
\address{
   D\'epartement de Math\'ematiques\\
   Universit\'e d'Ottawa\\
   150 Louis Pasteur\\
   Ottawa, Ontario K1N 6N5, Canada}
\email{droy@uottawa.ca}
\subjclass[2010]{Primary 11J13; Secondary 11J82}
\thanks{Work of both authors partially supported by NSERC}

\keywords{Diophantine approximation, exponents of Diophantine
approximation, extremal points, Marnat-Moshchevitin transference principle,
quadratic hypersurfaces, rational points, simultaneous approximation.}

\begin{abstract}
Let $Z$ be a quadratic hypersurface of $\bP^n(\bR)$ defined over $\bQ$ containing
points whose coordinates are linearly independent over $\bQ$.  We show that,
among these points, the largest exponent of uniform rational approximation
is the inverse $1/\rho$ of an explicit Pisot number $\rho<2$ depending only
on $n$ if the Witt index (over $\bQ$) of the quadratic form $q$ defining $Z$ is at most $1$,
and that it is equal to $1$ otherwise. Furthermore there are points of $Z$
which realize this maximum. They constitute a countably infinite set in the
first case, and an uncountable set in the second case.  The proof for the upper
bound $1/\rho$ uses a recent transference inequality of Marnat and Moshchevitin.
In the case $n=3$, we recover results of the second author while for $n>3$,
this completes recent work of Kleinbock and Moshchevitin.
\end{abstract}

\maketitle

\baselineskip=16.7pt 

\section{Introduction}

Nowadays, we have a good knowledge on how well points in projective
$n$-space $\bP^n(\bR)$ can be approximated by rational points
i.e.~by points of $\bP^n(\bQ)$.  Thanks to recent advances in
parametric geometry of numbers and in metrical theory
\cite{DFSU,R2015,SS2013a}, we essentially know all possible ways
in which a given point in $\bP^n(\bR)$ behaves with respect to
rational approximation and, in good cases, we also know the Hausdorff
dimension of the set of exceptional points having a given pattern
of approximation.  However, the situation changes drastically if
we restrict to points from a proper algebraic subset $Z$
of $\bP^n(\bR)$ defined over $\bQ$.  In particular, for the type
of problem that we have in mind, very little is known about
algebraic curves of $\bP^n(\bR)$ defined over $\bQ$ of degree
$d\ge 2$ besides the case $d=2$ treated in \cite{Rconic} which
reduces to studying conics in $\bP^2(\bR)$. In this paper,
we extend the results of \cite{Rconic} to quadratic hypersurfaces
of $\bP^n(\bR)$ defined over $\bQ$, thus completing recent work
of Kleinbock and Moshchevitin \cite{KM2019}.  We adopt here the
projective setting as in \cite{L2009, Rconic} because it is
more conceptual and brings simplifications with respect to
the traditional but equivalent affine point of view.
The connection is explained in Section \ref{sec:NPR}.

To each point $\xi$ of $\bP^n(\bR)$, one attaches two numbers
which measure how well it is approximated by rational points.
Following the convention of Bugeaud and Laurent in \cite{BL2005},
they are the exponent of \emph{uniform} rational approximation
$\lambdahat(\xi)$ and the exponent of \emph{best} rational
approximation $\lambda(\xi)$.  We recall their precise definition
in the next section.  In this study, we may assume that each
representative of $\xi$ in $\bR^{n+1}$ has linearly independent
coordinates over $\bQ$.  Then these exponents satisfy
\[
 \frac{1}{n}\le \lambdahat(\xi)\le 1
 \et
 \lambdahat(\xi)\le \lambda(\xi)\le \infty.
\]
There is also a third inequality relating $\lambdahat(\xi)$
and $\lambda(\xi)$.  It was conjectured by Schmidt and Summerer
at the end of Section 3 in \cite{SS2013b}, and was recently
proved by Marnat and Moshchevitin in \cite{MM2019}.  It plays
a crucial role in \cite{KM2019} and here also but through a sharper
form, in terms of measures of approximation, from joint work
with Van Nguyen \cite{NPR}.
These results are recalled in Section \ref{sec:NPR}.

For each algebraic
subset $Z$ of $\bP^n(\bR)$, we denote by $\Zli$ the set of points
of $Z$ whose representatives in $\bR^{n+1}$ have $\bQ$-linearly
independent coordinates and, provided that this set is not empty,
we are interested in the following important invariant
\[
 \lambdahat(Z):=\sup\{\lambdahat(\xi)\,;\,\xi\in \Zli\} \in [1/n,1].
\]

A quadratic form on $\bQ^{n+1}$ is a map $q\colon \bQ^{n+1}\to\bQ$
given by a non-zero homogeneous polynomial of $\bQ[t_0,\dots,t_n]$
of degree $2$.  Its Witt index is the largest integer $m\ge 0$ such
that $\bQ^{n+1}$ contains an orthogonal sum of $m$ hyperbolic planes
for $q$ (see Section \ref{sec:results}).  For each quadratic form $q$
on $\bQ^{n+1}$, we denote by $q_\bR\colon\bR^{n+1}\to\bR$ its
extension to $\bR^{n+1}$ given by the same polynomial, and by $Z(q_\bR)$
the set of zeros of $q_\bR$ in $\bP^n(\bR)$.  By a quadratic
hypersurface of $\bP^n(\bR)$ defined over $\bQ$ we mean any non-empty
subset of $\bP^n(\bR)$ of this form.  In terms of exponents of
approximation, our main result reads as follows.

\begin{theorem}
\label{intro:thm}
Let $n\ge 2$ be an integer, let $Z$ be a quadratic
hypersurface of $\bP^n(\bR)$ defined over $\bQ$, and
let $m$ be the Witt index of the quadratic form
on $\bQ^{n+1}$ defining $Z$. If $\Zli$ is not empty, then
\[
 \lambdahat(Z)
 =\begin{cases}
   1/\rho_n &\text{if $m\le 1$,}\\
   1 &\text{else}
  \end{cases}
\]
where $\rho_n\in(1,2)$ denotes the unique positive root
of the polynomial $x^n-(x^{n-1}+\cdots+x+1)$.
Moreover, the set $\{\xi\in\Zli\,;\,\lambdahat(\xi)=\lambdahat(Z)\}$
is countably infinite if $m\le 1$ and uncountable otherwise.
\end{theorem}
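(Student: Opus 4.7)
My plan is to split into two cases according to the Witt index $m$ of $q$.

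\textbf{Case $m \ge 2$.} The Witt decomposition gives a rational $4$-dimensional subspace $V \subset \bQ^{n+1}$ on which $q$ is equivalent over $\bQ$ to the sum of two hyperbolic planes, so $Z \cap \bP(V)$ is the Segre image of $\bP^1_\bQ \times \bP^1_\bQ$ in $\bP(V)$. When $n = 3$ a generic real point on this Segre image already lies in $\Zli$, and applying Dirichlet's theorem separately to each $\bP^1$-factor produces rational approximations on $Z$ with height $\asymp H^2$ and error $\asymp H^{-2}$, yielding $\lambdahat(\xi)=1$. For $n > 3$ one exploits the additional $\bQ$-rational structure coming from $m \ge 2$: a rational isotropic line on $Z$ induces, by projection, a $\bP^1$-bundle structure on $Z$ whose fibers recover the Segre approximation, while the base $\bP^{n-2}$-parameters can be chosen generically so that the resulting $\xi$ has coordinates that are $\bQ$-linearly independent in $\bR^{n+1}$. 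Varying the real parameters produces uncountably many $\xi \in \Zli$ with $\lambdahat(\xi)=1$, so $\lambdahat(Z)=1$.

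\textbf{Case $m \le 1$, upper bound.} Fix $\xi \in \Zli$ and consider a sequence $(p_k)\subset\bZ^{n+1}$ of best approximations to $\xi$. A Taylor expansion of $q$ and its associated bilinear form $B$ around $\bR\xi$, using $q(\xi)=0$, yields $|q(p_k)| \ll \|p_k\|\cdot\dist(p_k,\bR\xi)$ and $|B(p_k,p_{k+1})| \ll \|p_k\|\cdot\dist(p_{k+1},\bR\xi)+\|p_{k+1}\|\cdot\dist(p_k,\bR\xi)$, so these integers are of controlled size and, sufficiently far into the sequence, must vanish. The key geometric input is that, since $m\le 1$, totally $q$-isotropic rational subspaces of $\bQ^{n+1}$ have dimension at most $1$: simultaneous vanishing of $q(p_k)$, $q(p_{k+1})$, and $B(p_k,p_{k+1})$ forces $p_k$ and $p_{k+1}$ to be proportional, which cannot persist for $\xi \in \Zli$. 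Feeding this rigidity into the refined Marnat--Moshchevitin transference inequality of \cite{NPR}, which governs the full collection of successive minima attached to $\xi$ and not only the exponents, yields $\lambdahat(\xi)\le 1/\rho_n$. The Pisot number $\rho_n$ arises as the dominant root of the characteristic polynomial of the length-$n$ extremal recursion $x_k=x_{k-1}+\cdots+x_{k-n}$ governing such constrained approximation sequences.

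\textbf{Case $m \le 1$, lower bound and countability.} Matching extremal points are produced by a Fibonacci-type iteration generalizing the construction of \cite{Rconic} for conics. Choose a companion matrix $A\in\GL_{n+1}(\bZ)$ whose characteristic polynomial is $x^n-x^{n-1}-\cdots-x-1$, together with an initial vector $p_0\in\bZ^{n+1}$ chosen so that $A^kp_0\in Z(\bZ)$ for every $k\ge 0$; the projective limit of $A^kp_0$ then defines a point $\xi\in\Zli$ satisfying $\lambdahat(\xi)\ge 1/\rho_n$, and equality follows from the upper bound. The construction is determined by countably many discrete choices (a matrix from a countable catalogue together with a compatible $p_0$), so the extremal set is at most countable; it is infinite because of the freedom in the initial data. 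The principal obstacle is the upper bound: extracting precisely the exponent $1/\rho_n$ from the $m\le 1$ hypothesis requires combining the Witt-theoretic rigidity of the $p_k$ with the sharpened measure-theoretic form of the Marnat--Moshchevitin inequality from \cite{NPR}, where the critical threshold for compatibility between these two constraints is exactly $1/\rho_n$.
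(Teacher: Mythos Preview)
Your outline has the right architecture---split on the Witt index, invoke the transference result of \cite{NPR}, and build extremal points by a Fibonacci-type recursion---but several of the concrete mechanisms you propose do not work.

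\textbf{Upper bound, $m\le 1$.} The claim that $q(p_k)$, $q(p_{k+1})$ and $B(p_k,p_{k+1})$ ``must vanish'' does not follow from your estimates: $|q(p_k)|\ll\|p_k\|D_\xi(p_k)$ only forces vanishing when $D_\xi(p_k)=o(\|p_k\|^{-1})$, and since $\lambdahat(\xi)\le 1$ always, you cannot assume this a priori. The paper runs the inequality the other way: if $q(\ux)\neq 0$ then $|q(\ux)|\ge 1$, hence $D_\xi(\ux)\gg\|\ux\|^{-1}$ for every integer point \emph{off} $Z$, and this lower bound with $\beta=1$ is exactly what feeds into \cite{NPR} to produce $1/\rho_n$. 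Separately one must show that when $m\le 1$ the best approximations eventually lie off $Z$; this is done via a height inequality (Lemma~\ref{lemma:xyz} and Proposition~\ref{proofiv:prop}) rather than by forcing $p_k,p_{k+1}$ to be proportional. Note also that for degenerate $q$ the maximal totally isotropic subspaces have dimension $m+\dim\ker(q)$, so your ``dimension at most $1$'' claim fails in general.

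\textbf{Lower bound, $m\le 1$.} This is the most serious gap. A companion matrix for $x^n-x^{n-1}-\cdots-1$ has size $n\times n$, not $(n+1)\times(n+1)$. More importantly, the approximating points cannot be taken on $Z$: when $m=0$ there are essentially no rational points on $Z$, and when $m\le 1$ Proposition~\ref{proofiv:prop} shows they approximate $\xi$ too poorly. The paper instead builds a sequence with $q(\ux_i)=1$ (off $Z$) via the \emph{nonlinear} recursion $\ux_{i+n+1}=\psi(\ux_{i+n},\ux_i)=b(\ux_{i+n},\ux_i)\ux_{i+n}-\ux_i$; the growth $\|\ux_{i+1}\|\asymp\|\ux_i\|^{\rho_n}$ is obtained by a delicate asymptotic analysis (Proposition~\ref{construction:prop:asymp}), not by iterating a fixed linear map. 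Your countability argument is also circular: exhibiting a countable family of extremal points does not bound the full extremal set; the paper proves countability by showing (Proposition~\ref{outside:prop2}) that \emph{every} extremal point is the limit of a sequence determined by finitely many initial terms.

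\textbf{Case $m\ge 2$.} The Segre idea is sound when $n=3$, but for $n>3$ a point of $\bP(V_\bR)$ with $V$ a rational $4$-plane automatically satisfies a rational linear relation and so lies outside $\Zli$; your ``$\bP^1$-bundle over $\bP^{n-2}$'' fix is too vague to repair this. The paper's construction (Section~\ref{sec:proofv}) instead builds sequences of isotropic integer points whose consecutive pairs span totally isotropic planes while any $n+1$ consecutive terms span $\bQ^{n+1}$, which is what guarantees the limit lies in $\Zli$.
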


In \cite[Theorem 1a]{KM2019} (resp.\ \cite[Theorem 2a]{KM2019}),
Kleinbock and Moshchevitin prove the upper bound
$\lambdahat(Z)\le 1/\rho_n$ for all quadratic hypersurfaces $Z$
of $\bP^n(\bR)$ defined by quadratic forms $q$ on $\bQ^{n+1}$
of the type
\begin{equation}
 \label{intro:eq:KM}
 q(t_0,t_1,\dots,t_n)=t_0^2-f(t_1,\dots,t_n)
 \quad
 \big(\text{resp.}\ q(t_0,t_1,\dots,t_n)=t_0t_n-f(t_1,\dots,t_{n-1})\big)
\end{equation}
where $f$ is a quadratic form on $\bQ^n$ (resp.\ $\bQ^{n-1}$) with no
nontrivial zero.  As we will see in Section \ref{sec:equiv},
the general case of a non-degenerate quadratic form $q$ of Witt
index $m\le 1$ can be reduced to these two special cases. Thus their
results yield $\lambdahat(Z)\le 1/\rho_n$ in the non-degenerate case
when $m\le 1$. Theorem \ref{intro:thm} shows that this extends with
an equality to all quadratic forms of Witt index $m\le 1$.  For example,
it applies to the degenerate quadratic forms $q(t_0,\dots,t_N)$
given by the same formulas \eqref{intro:eq:KM}
for each integer $N>n$.  In particular, for each $n\ge 2$, it yields
$\lambdahat(Z)=1/\rho_n$ for the hypersurface $Z$ of $\bP^n(\bR)$
of equation $t_0^2-2t_1^2=0$ (resp.\ $t_0t_2-t_1^2=0$).

When $n=2$, the number $\rho_n=\rho_2$ is the golden ratio and
we automatically have $m\le 1$.  Then, the above theorem reduces
to \cite[Theorem 1.2]{Rconic}.  In general, $\rho_n$ is a
Pisot number for each $n\ge 2$ (see Section \ref{sec:construction}).

In the next section we present a sharper version of the theorem
dealing with measures of approximation to points of $Z$ instead
of the coarse estimation provided by exponents of approximation.
In the case where $m\le 1$, it leads to a notion of extremal
points on $Z$ generalizing the notion of extremal numbers
from \cite{Rnote, RcubicI}.

%
%

\section{Main result and notation}
 \label{sec:results}

Fix an integer $n\ge 1$.  In this paper we endow $\bR^{n+1}$ with
the standard structure of Euclidean space for which the canonical
basis $(\ue_0,\dots,\ue_n)$ is orthonormal. More generally, for
each $k=1,\dots,n+1$, we endow its $k$-th exterior power
$\bigwedge^k\bR^{n+1}$ with the Euclidean space structure for
which the products $\ue_{i_1}\wedge\cdots\wedge\ue_{i_k}$
with $0\le i_1<\cdots<i_k\le n$ form an orthonormal basis.
In all cases we use the same symbol $\|\ \|$ to denote the
associated norm.

We denote by $(x_0:x_1:\cdots:x_n)$ or simply by $[\ux]$ the class
in $\bP^n(\bR)$ of a non-zero point $\ux=(x_0,x_1,\dots,x_n)$
of $\bR^{n+1}$.  Given non-zero points $\ux,\uy\in\bR^{n+1}$, we
define
\[
 \dist([\ux],[\uy])
  :=\dist(\ux,\uy)
  :=\frac{\norm{\ux\wedge\uy}}{\norm{\ux}\,\norm{\uy}}
\]
and call this ratio the \emph{projective distance} between $[\ux]$
and $[\uy]$ as it depends only on the classes of the points $\ux$
and $\uy$ in $\bP^n(\bR)$.  Geometrically, this is the sinus of the
acute angle between the lines $\bR\ux$ and $\bR\uy$ spanned by $\ux$
and $\uy$ in $\bR^{n+1}$.  It is well-known that this yields a metric
on $\bP^n(\bR)$ as it satisfies the triangle inequality
\[
 \dist([\ux],[\uz])\le \dist([\ux],[\uy])+\dist([\uy],[\uz])
\]
for any $\ux,\uy,\uz\in\bR^{n+1}\setminus\{0\}$.

Let $\xi\in\bP^n(\bR)$ and let $\uxi\in\bR^{n+1}$ be a representative
of $\xi$ so that $\xi=[\uxi]$. For each non-zero $\ux\in\bZ^{n+1}$, we
set
\[
 D_\xi(\ux)
   := \frac{\norm{\ux\wedge\uxi}}{\norm{\uxi}}
   = \norm{\ux}\dist(\xi,[\ux])
\]
and for each $X\ge 1$ we define
\[
 \cD_\xi(X)
  :=\min\left\{ D_\xi(\ux)\,;\,
      \ux\in\bZ^{n+1}\setminus\{0\}
      \ \text{and}\
      \norm{\ux}\le X \right\}.
\]
This minimum is achieved by a \emph{primitive} point
of $\bZ^{n+1}$, that is a point whose coordinates are relatively prime
as a set.  Thus, upon defining the \emph{height} of a point
in $\bP^n(\bQ)$ as the norm $\norm{\ux}$ of its primitive
representatives $\pm \ux$ in $\bZ^{n+1}$, we view $\cD_\xi(X)$
as a measure of approximation to $\xi$ by rational points of height
at most $X$.  We define $\lambdahat(\xi)$ (resp.\ $\lambda(\xi)$) to
be the supremum of all $\lambda\in\bR$ such that
$\cD_\xi(X)\le X^{-\lambda}$ for each sufficiently large $X$ (resp.\
for arbitrarily large values of $X$).  Equivalently, $\lambdahat(\xi)$
(resp.\ $\lambda(\xi)$) is the
supremum of all $\lambda\in\bR$ such that
\[
 \limsup_{X\to\infty} X^\lambda \cD_\xi(X)<\infty
 \qquad
 \big(\text{resp.}\ \liminf_{X\to\infty} X^\lambda \cD_\xi(X)<\infty\big).
\]
With this notation, we can add the following precision to
Theorem \ref{intro:thm}.

\begin{theorem}
 \label{results:thm1}
Under the hypotheses of Theorem \ref{intro:thm}, suppose that $m\le 1$.
Then we have
\begin{itemize}
 \item[(i)] $\limsup_{X\to\infty} X^{1/\rho_n}\cD_\xi(X) >0$
     for each $\xi\in \Zli$;
 \smallskip
 \item[(ii)] $\limsup_{X\to\infty} X^{1/\rho_n}\cD_\xi(X) <\infty$
     for infinitely many $\xi\in \Zli$.
\end{itemize}
\end{theorem}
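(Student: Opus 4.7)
The plan is to prove the two parts separately, with (i) being the deeper statement.

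For part~(i), I argue by contradiction. Assume that some $\xi\in\Zli$ satisfies $X^{1/\rho_n}\cD_\xi(X)\to 0$. Using the reduction announced in Section~\ref{sec:equiv}, I may assume that the defining form $q$ has one of the two shapes in~\eqref{intro:eq:KM}, so that its ``non-hyperbolic part'' $f$ is anisotropic over $\bQ$. Fix a representative $\uxi\in\bR^{n+1}$ of $\xi$ with $q_\bR(\uxi)=0$ and let $b$ denote the bilinear polarisation of $q$. For any primitive integer point $\ux$ realising $\cD_\xi(X)$, decomposing $\ux=\alpha\uxi+\uy$ with $\uy\perp\uxi$ and using $q(\uxi)=0$ yields the quadratic vanishing estimate
\[
 |q(\ux)|\ \ll\ \norm{\ux}\,\cD_\xi(X)+\cD_\xi(X)^2,
\]
with an implied constant depending only on $\uxi$ and $q$.

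The next step is to invoke the refined Marnat--Moshchevitin transference principle from~\cite{NPR}, which operates directly on the measures $\cD_\xi$ rather than only on the exponents. Combined with the vanishing hypothesis, it should produce a sequence of primitive integer points $\ux_k$ whose norms grow geometrically and for which the product $\norm{\ux_k}\cD_\xi(\norm{\ux_k})$ tends to $0$ faster than any positive power of $\norm{\ux_k}^{-1}$. Substituting into the previous display forces $|q(\ux_k)|<1$ for $k$ large, and integrality of $q(\ux_k)$ then gives $q(\ux_k)=0$, so each $[\ux_k]$ is a rational point of $Z$. The Witt-index hypothesis $m\le 1$ finally intervenes: for $q$ of the normalised shape~\eqref{intro:eq:KM}, the set $Z(\bQ)$ is either empty ($m=0$) or too thinly distributed around $[\uxi]$ ($m=1$, by a secant-line argument exploiting anisotropy of $f$) to support the fast approximating sequence constructed above, yielding the desired contradiction.

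For part~(ii), I follow the template of the extremal-number constructions of~\cite{Rnote,RcubicI,Rconic}. Choose integer initial data $\uu_0,\uu_1,\ldots,\uu_{n-1}\in\bZ^{n+1}$ satisfying explicit algebraic relations dictated by $b$, then define a sequence $(\uu_k)_{k\ge 0}$ by a linear recurrence of order $n$ whose companion matrix has characteristic polynomial $x^n-(x^{n-1}+\cdots+x+1)$, the dominant root being $\rho_n$. A careful choice of initial data ensures that $q(\uu_k)$ is either identically zero or uniformly bounded, and that the $\uu_k$ satisfy a palindromic symmetry as in Roy's work. Standard estimates for $n$-th order recurrences then yield $\log\norm{\uu_k}\asymp\rho_n^k$ and control on the projective distances $\dist([\uu_k],[\uu_{k+1}])$, so that the sequence $([\uu_k])$ converges to a point $\xi\in Z$. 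One checks $\xi\in\Zli$ via an algebraic-independence argument on the $\uu_k$, and a geometry-of-numbers estimate shows that the $\uu_k$ are the best-approximation minima for $\xi$, giving $\cD_\xi(X)\ll X^{-1/\rho_n}$ for every large $X$. Varying the admissible starting configurations provides countably infinitely many distinct such $\xi$.

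The main obstacle is part~(i). The difficulty is that the quadratic vanishing estimate on its own only rules out $\lambdahat(\xi)=1$; to sharpen it to $\lambdahat(\xi)\le 1/\rho_n$ with quantitative control on the measure $\cD_\xi$, one must iterate the estimate across $n$ consecutive successive minima of the approximation problem for $\xi$ and orchestrate the refined transference of~\cite{NPR} with the integrality constraint $q(\ux_k)\in\bZ$ so as to force strict vanishing $q(\ux_k)=0$ at each stage. Running this orchestration at the level of measures rather than mere exponents is the technical heart of the argument.
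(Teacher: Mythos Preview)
Your proposal has genuine gaps in both parts.

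\textbf{Part (i).} Your plan is to use the NPR transference on the full lattice $\bZ^{n+1}$ to force $q(\ux_k)=0$ along a sequence of good approximations, and then derive a contradiction from the thinness of $Z(\bQ)$ when $m\le 1$. This cannot work as written. The quadratic estimate $|q(\ux)|\ll\norm{\ux}\,D_\xi(\ux)$ only forces $q(\ux)=0$ when $D_\xi(\ux)\ll\norm{\ux}^{-1}$, that is, when the approximation already has exponent essentially $1$. Your contradiction hypothesis gives only $\cD_\xi(X)=o(X^{-1/\rho_n})$, so $\norm{\ux}\,D_\xi(\ux)$ is of order $\norm{\ux}^{1-1/\rho_n}\to\infty$, not $\to 0$; the claim that NPR produces a sequence with $\norm{\ux_k}\,\cD_\xi(\norm{\ux_k})\to 0$ is simply false (NPR needs a lower bound $\cL_\uxi(X)\ge bX^{-\beta}$ as input, and you have none). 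The paper's approach inverts the logic: it \emph{restricts} the approximation problem to the set $S=\{\ux\in\bZ^{n+1}:q(\ux)\neq 0\}$, where integrality gives $|q(\ux)|\ge 1$ and hence, via the same quadratic estimate, the clean lower bound $\cL_\uxi(X;S)\gg X^{-1}$. This supplies $\beta=1$ to NPR, and with $\alpha=1/\rho_n$ one has $\epsilon=0$ exactly, yielding $\limsup X^{1/\rho_n}\cD_\xi(X;E)>0$ for $E=\bP^n(\bQ)\setminus Z$. The Witt-index hypothesis $m\le 1$ is then used \emph{separately} to show that rational points of $Z$ satisfy $D_\xi(\ux)\gg\norm{\ux}^{-1/2}$ (via the map $\psi$ and a height inequality), so that $\cD_\xi(X)=\cD_\xi(X;E)$ once $\lambdahat(\xi)>1/2$.

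\textbf{Part (ii).} A linear recurrence of order $n$ on points of $\bZ^{n+1}$ confines the entire sequence $(\uu_k)$ to the $\bQ$-span of the initial data $\uu_0,\dots,\uu_{n-1}$, which has dimension at most $n$; any projective limit then lies in a rational hyperplane and cannot belong to $\Zli$. The paper instead uses the \emph{nonlinear} recurrence $\ux_{i+n+1}=\psi(\ux_{i+n},\ux_i)=b(\ux_i,\ux_{i+n})\ux_{i+n}-\ux_i$, whose varying coefficient $b(\ux_i,\ux_{i+n})$ grows and keeps any $n+1$ consecutive terms linearly independent. The key algebraic identity $q(\psi(\ux,\uy))=q(\ux)^2q(\uy)$ (Lemma~\ref{forms:lemma:psi}) ensures $q(\ux_i)=1$ for all $i$, so the sequence stays in the level set $U=\{q=1\}$ rather than on $Z$. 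The growth $\norm{\ux_{i+1}}\asymp\norm{\ux_i}^{\rho_n}$ is then obtained not from a linear recurrence on the points but from an approximation lemma applied to the \emph{logarithms} of the auxiliary quantities $|b(\ux_{i-j},\ux_i)|$, $\norm{\ux_{i-j}\wedge\ux_i}$ and $\norm{\ux_i}$, which satisfy an asymptotically linear system whose dominant eigenvalue is $\rho_n$.
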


Indeed, Part (i) implies that $\lambdahat(\xi)\le 1/\rho_n$ for
each $\xi\in\Zli$ while Part (ii) yields points $\xi\in\Zli$
with $\lambdahat(\xi)\ge 1/\rho_n$ and thus $\lambdahat(\xi)=1/\rho_n$.
Altogether, this means that $\lambdahat(Z)=1/\rho_n$.

We say that the points $\xi\in\Zli$ which satisfy the condition
in Part (ii) are the \emph{extremal points} of $Z$.  For any such
point, there exist constants $c_1\ge c_2>0$ such that
$\cD_\xi(X)\le c_1X^{-1/\rho_n}$ for each sufficiently large $X$,
as well as $\cD_\xi(X)\ge c_2X^{-1/\rho_n}$ for arbitrarily large
values of $X$.  This generalizes the notion of extremal numbers
from \cite{RcubicI} because $\rho_2$ is the golden ratio and thus
the extremal points of the conic $Z$ of $\bP^2(\bR)$ defined by
the quadratic form $t_0t_2-t_1^2$ of Witt index $m=1$ are the
points $(1:\xi:\xi^2)$ where $\xi$ is an extremal number.

For any subset $E$ of $\bP^n(\bQ)$ and any $X\ge 1$, we define
\[
 \cD_\xi(X;E)
  :=\min\left\{ D_\xi(\ux)\,;\,
      \ux\in \bZ^{n+1}\setminus\{0\},\
      \norm{\ux}\le X
      \ \text{and}\
      [\ux]\in E \right\}
\]
with the convention that $\min\emptyset=-\infty$. Our main result
below extends Theorem \ref{results:thm1} by taking $E$ to be
either the set $Z(\bQ):=Z\cap\bP^n(\bQ)$ of rational points of $Z$
or its complement.

\begin{theorem}
 \label{results:thm2}
Suppose that $n\ge 2$.  Let $Z$ be a quadratic hypersurface
of $\bP^n(\bR)$ defined over $\bQ$ with $\Zli\neq\emptyset$.
Define $Z(\bQ):=Z\cap\bP^n(\bQ)$ and $E:=\bP^n(\bQ)\setminus Z$. Then
\begin{itemize}
 \item[(i)] we have $\limsup_{X\to\infty} X^{1/\rho_n}\cD_{\xi}(X;E) >0$
   for each $\xi\in \Zli$;
 \smallskip
 \item[(ii)] we have $\limsup_{X\to\infty} X^{1/\rho_n}\cD_{\xi}(X;E) <\infty$
   for infinitely many $\xi\in \Zli$;
 \smallskip
 \item[(iii)] there exists $\epsilon>0$ such that the set
   \[
    \{\xi\in \Zli\,;\,\limsup_{X\to\infty}
       X^{1/\rho_n-\epsilon}\cD_{\xi}(X;E) <\infty\}
   \]
   is at most countable.
\end{itemize}
Moreover, let $m$ denote the Witt index of the quadratic form
on $\bQ^{n+1}$ defining $Z$.
\begin{itemize}
 \item[(iv)] If $m\le 1$, then any $\xi\in\Zli$ with
    $\lambdahat(\xi)>1/2$ satisfies $\cD_{\xi}(X)=\cD_{\xi}(X;E)$
    for each sufficiently large $X$.
 \item[(v)] If $m>1$, then any $\xi\in\Zli$ with
   $\lambdahat(\xi)>1/\rho_n$ satisfies $\cD_{\xi}(X)=\cD_{\xi}(X;Z(\bQ))$
   for each sufficiently large $X$.  Moreover, for each monotonically
   decreasing function $\varphi\colon[1,\infty)\to(0,1]$ with
   $\lim_{X\to\infty}\varphi(X)=0$ and $\lim_{X\to\infty}X\varphi(X)=\infty$,
   there are uncountably many $\xi\in\Zli$ satisfying $\cD_{\xi}(X;Z(\bQ))
   \le \varphi(X)$ for all sufficiently large $X$.
\end{itemize}
\end{theorem}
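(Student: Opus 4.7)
My plan is built around a single fundamental inequality. For any $\xi \in \Zli$ with integral representative $\uxi$ and any primitive $\ux \in \bZ^{n+1}$ with $[\ux] \notin Z$, combining the lower bound $|q(\ux)| \ge 1$ (after scaling $q \in \bZ[t_0,\ldots,t_n]$, noting $q(\ux) \ne 0$) with the orthogonal decomposition $\ux = t\uxi + \uw$, $\uw \perp \uxi$, $\norm{\uw} = D_\xi(\ux)$, together with $q_\bR(\uxi) = 0$, I obtain
\[
 D_\xi(\ux) \, \norm{\ux} \ge c_{\xi,q} > 0.
\]
I would then prove (i) by inserting this slow-direction constraint on off-$Z$ best approximations of $\xi$ into the sharp transference inequality from \cite{NPR}: the exponent of uniform approximation restricted to $E$ cannot exceed $1/\rho_n$. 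Part (iii) is a quantitative refinement of the same analysis: if $\limsup X^{1/\rho_n-\epsilon}\cD_\xi(X;E) < \infty$ for some uniform $\epsilon>0$, the transference inequality is quasi-saturated, forcing the sequence of best off-$Z$ approximations to obey a linear recurrence of bounded length with integer coefficients of bounded size, leaving at most countably many admissible $\xi$.

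For (ii) I would generalise the Fibonacci-like construction of extremal numbers in \cite{RcubicI}. For each choice of seed data I build an infinite sequence $(\ux_k)_{k \ge 0}$ of primitive points in $\bZ^{n+1}$ lying on $Z$ whose norms grow as $\norm{\ux_k} \asymp \rho_n^{a_k}$, where the exponents $a_k$ satisfy the linear recurrence $a_k = a_{k-1}+\cdots+a_{k-n}$ whose characteristic polynomial is exactly $x^n-(x^{n-1}+\cdots+1)$. Each $\ux_k$ is produced from the previous ones by a symmetric multilinear operator tailored to $q$ so that the image lies again on $Z$, has controlled height, and the projective limit $\xi = \lim_k [\ux_k]$ realises equality in (i). Varying the seed data gives infinitely many distinct extremal $\xi$.

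Parts (iv) and (v) rest on comparing on-$Z$ and off-$Z$ approximation rates. For (iv), when $m \le 1$, the Witt decomposition forces the rational points of $Z$ to be sparse enough that $D_\xi(\ux) \gg \norm{\ux}^{-1/2}$ for every primitive $\ux \in \bZ^{n+1}$ with $[\ux] \in Z(\bQ)$; since $\lambdahat(\xi) > 1/2$, no best approximation of $\xi$ can lie on $Z(\bQ)$, so $\cD_\xi(X) = \cD_\xi(X;E)$ for large $X$. For the first half of (v) the dual statement applies: the bound $\cD_\xi(X;E) \gg X^{-1/\rho_n}$ from (i) together with $\lambdahat(\xi) > 1/\rho_n$ forces best approximations onto $Z(\bQ)$. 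For the uncountability statement, since $m \ge 2$ the Witt decomposition produces a rational projective subspace $L \subset Z$ of positive dimension, and a Cantor-scheme construction on the real points of $L$ with $\bQ$-linearly independent coordinates yields an uncountable family of $\xi \in \Zli$ whose rational approximations by points of $L \cap \bP^n(\bQ) \subset Z(\bQ)$ match any prescribed decay rate $\varphi$.

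The main obstacle is (ii): identifying the right recursive operator that stays inside $Z$, preserves primitivity, and delivers both the exact growth $\norm{\ux_k} \asymp \rho_n^{a_k}$ and the sharp decay of $D_\xi(\ux_k)$ needed to pin down the exponent $1/\rho_n$ with a finite non-zero $\limsup$. This is the algebraic heart of the argument; the remaining parts flow more readily from the transference inequality of \cite{NPR}, the Witt-index structure of rational quadrics, and standard rigidity and counting arguments.
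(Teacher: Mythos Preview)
Your overall architecture matches the paper's, and parts (i), (iii), (iv) are on the right track. But (ii) has a fatal slip: you place the approximating sequence $(\ux_k)$ \emph{on} $Z$, yet the target is $\cD_\xi(X;E)$ with $E=\bP^n(\bQ)\setminus Z$, so the $\ux_k$ must satisfy $q(\ux_k)\neq 0$. The paper builds its sequence on the level set $\{q=1\}$ via the quadratic map $\psi(\ux,\uy)=b(\ux,\uy)\ux-q(\ux)\uy$, which satisfies $q(\psi(\ux,\uy))=q(\ux)^2q(\uy)$ and hence preserves $\{q=1\}$; the recursion is $\ux_{i+1}=\psi(\ux_i,\ux_{i-n})$, and the limit $\xi$ lands on $Z$ only because $q(\ux_i)/\norm{\ux_i}^2\to 0$. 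This same map $\psi$ is also the ``linear recurrence'' that drives (iii), so identifying it correctly is exactly the algebraic heart you flag as the main obstacle.

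Both halves of (v) also have gaps. For the first, (i) gives only a $\limsup$, not $\cD_\xi(X;E)\gg X^{-1/\rho_n}$ for all large $X$, so you can only conclude $\cD_\xi(X)=\cD_\xi(X;Z(\bQ))$ for \emph{arbitrarily} large $X$; the paper then adds a propagation step (from $Y$ down to $[Y/2,Y]$) using the height bound $\norm{\uy}\ll\norm{\ux\wedge\uy}^2$ for isotropic $\uy$ lying in a non-isotropic plane. For the second, no point of a fixed proper rational projective subspace $L\subset Z$ can lie in $\Zli$: every such point has a representative in a proper rational linear subspace of $\bR^{n+1}$, hence $\bQ$-linearly dependent coordinates. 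The paper's Cantor scheme instead threads a sequence of rational isotropic points through \emph{varying} totally isotropic planes, arranged so that any $n+1$ consecutive approximants are linearly independent over $\bQ$; this is what forces the limit into $\Zli$.
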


In view of (iv), when $m\le 1$, Parts (i) and (ii) yield
Theorem \ref{results:thm1} (because $1<\rho_n<2$). Moreover
the extremal points of $Z$ are the points $\xi\in\Zli$ 
satisfying the inequality of Part (ii).  
Taking this condition as the general 
definition of an extremal point $\xi$ of $Z$ (without the above 
restriction $m\le 1$), it follows from (iii) that the extremal 
points of $Z$ always form an infinite countable set.  Finally, 
if $m>1$, then applying (v) with $\varphi=\log(3X)/X$
yields uncountably many points $\xi\in\Zli$ with
$\lambdahat(\xi)\ge 1$ and so $\lambdahat(\xi)=1$.
Thus Theorem \ref{results:thm2} also implies Theorem \ref{intro:thm}.

The proof of Parts (i) and (iii) is given in
Section \ref{sec:outside} based on a result from joint work with
Van Nguyen \cite{NPR} that we recall in Section \ref{sec:NPR}.
This result complements the inequality of Marnat and Moshchevitin
from \cite{MM2019}, also recalled in Section \ref{sec:NPR}, which
relates $\lambda(\xi)$ and $\lambdahat(\xi)$ for any
$\xi\in(\bP^n(\bR))^{\mathrm{li}}$.  The proof of Parts (ii), (iv)
and (v) are given respectively in Sections \ref{sec:construction},
\ref{sec:proofiv} and \ref{sec:proofv} respectively.  The preliminary
sections \ref{sec:forms} and \ref{sec:equiv} present general facts
about quadratic forms.  In Section \ref{sec:forms}, we recall the
Witt decomposition of a quadratic space $V$ over $\bQ$ and define
a morphism $\psi\colon V\times V\to V$ with crucial properties.
Section \ref{sec:equiv} reduces the study of a general quadratic
form to two types of forms, a fact that we use in Section
\ref{sec:construction} for the construction of extremal points.
Finally, Section \ref{sec:est} provides various estimates needed
throughout the paper.

%
%

\section{Preliminaries on quadratic forms}
\label{sec:forms}

Let $V$ be a vector space over $\bQ$ of finite dimension
$n+1$ for some integer $n\ge 0$.  A \emph{quadratic form}
on $V$ is a map $q\colon V\to\bQ$ given by
\begin{equation}
 \label{forms:eq:qb}
 q(\ux)=\frac{1}{2}\, b(\ux,\ux) \quad\text{for each $\ux\in V$,}
\end{equation}
for some non-zero symmetric bilinear form
$b\colon V\times V\to \bQ$.  This bilinear form is in turn
uniquely determined by $q$ through the formula
\begin{equation}
 \label{forms:eq:bq}
 b(\ux,\uy)=q(\ux+\uy)-q(\ux)-q(\uy)
 \quad\text{for each $\ux,\uy\in V$.}
\end{equation}
When $V=\bQ^{n+1}$, which is the main case of interest for us,
this is equivalent to the definition given in the introduction.
Moreover, the normalization factor $1/2$ in \eqref{forms:eq:qb}
ensures that, if $q$ is integer-valued on $\bZ^{n+1}$, then
$b$ also is integer-valued on $\bZ^{n+1}\times\bZ^{n+1}$
(by \eqref{forms:eq:bq}).

A useful consequence of the formula \eqref{forms:eq:qb} is that
\begin{equation}
 \label{forms:eq:qcomb}
 q(s\ux+t\uy)=s^2q(\ux)+stb(\ux,\uy)+t^2q(\uy)
\end{equation}
for any $s,t\in\bQ$ and any $\ux,\uy\in V$. For the choice
of $s=b(\ux,\uy)$ and $t=-q(\ux)$, it yields the following
generalization of \cite[Lemma 4.1]{Rconic}.

\begin{lemma}
\label{forms:lemma:psi}
For each choice of $\ux,\uy\in V$, the point
\[
 \uz = \psi(\ux,\uy) := b(\ux,\uy)\ux-q(\ux)\uy \in V
\]
satisfies $q(\uz)=q(\ux)^2q(\uy)$ and $\psi(\ux,\uz)=q(\ux)^2\uy$.
\end{lemma}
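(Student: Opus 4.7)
The plan is to verify both identities by direct algebraic manipulation, using formula \eqref{forms:eq:qcomb} to handle $q$ of a linear combination, and using the bilinearity of $b$ together with the fundamental relation $b(\ux,\ux)=2q(\ux)$ coming from \eqref{forms:eq:qb}. No deeper structural input is needed; everything follows from the formal definition of $\psi$.

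For the first identity, I would apply \eqref{forms:eq:qcomb} with $s=b(\ux,\uy)$ and $t=-q(\ux)$ to obtain
\[
 q(\uz)
 = b(\ux,\uy)^2\,q(\ux) \;-\; b(\ux,\uy)\,q(\ux)\,b(\ux,\uy) \;+\; q(\ux)^2\,q(\uy).
\]
The first two terms cancel, leaving $q(\uz)=q(\ux)^2 q(\uy)$, as desired.

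For the second identity, I would first compute $b(\ux,\uz)$ by expanding linearly in the second argument and using $b(\ux,\ux)=2q(\ux)$:
\[
 b(\ux,\uz)
 = b(\ux,\uy)\,b(\ux,\ux)-q(\ux)\,b(\ux,\uy)
 = 2q(\ux)\,b(\ux,\uy)-q(\ux)\,b(\ux,\uy)
 = q(\ux)\,b(\ux,\uy).
\]
Substituting into the definition of $\psi$ and replacing $\uz$ by $b(\ux,\uy)\ux-q(\ux)\uy$ gives
\[
 \psi(\ux,\uz)=b(\ux,\uz)\,\ux-q(\ux)\,\uz
 = q(\ux)\,b(\ux,\uy)\,\ux-q(\ux)\bigl(b(\ux,\uy)\,\ux-q(\ux)\,\uy\bigr),
\]
and the $\ux$ terms cancel to yield $q(\ux)^2\,\uy$.

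There is no genuine obstacle: the lemma is a straightforward formal identity. The only subtlety to keep in mind is the factor $1/2$ in the normalization \eqref{forms:eq:qb}, which is precisely what ensures $b(\ux,\ux)=2q(\ux)$ and hence the clean cancellations above; without it the constants in the two formulas would be slightly different.
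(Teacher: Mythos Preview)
Your proof is correct and follows exactly the approach indicated in the paper, which simply notes that the lemma follows from formula \eqref{forms:eq:qcomb} with $s=b(\ux,\uy)$ and $t=-q(\ux)$. You have supplied the full details for both identities, including the second one which the paper leaves implicit.
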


In particular, we have $q(\uz)=0$ if $q(\uy)=0$ and $q(\uz)=1$
if $q(\ux)=q(\uy)=1$. The polynomial map
$\psi\colon V\times V\to V$ so defined is central to the
present work.  Note that it is bi-homogeneous of degree $(2,1)$.

Given $q$ and $b$ as above, we say that points $\ux$, $\uy$
of $V$ are \emph{orthogonal} (with respect to $q$)
if $b(\ux,\uy)=0$.  It is well-known that every subspace $W$
of $V$ admits an \emph{orthogonal basis}, namely a basis
whose elements are pairwise orthogonal.  In particular,
$V$ itself admits an orthogonal basis $(\ux_0,\dots,\ux_n)$
and upon setting $a_i=q(\ux_i)$ for $i=0,\dots,n$, we obtain
\begin{equation}
 \label{forms:eq:diag}
 q(t_0\ux_0+\cdots+t_n\ux_n)=a_0t_0^2+\cdots+a_nt_n^2
\end{equation}
for each $(t_0,\dots,t_n)\in\bQ^{n+1}$.  A theorem of Silvester
(valid more generally over $\bR$) tells us that in such
a formula the number $n_0$ (resp.~$n_+$, resp.~$n_-$)
of indices $i$ with $a_i=0$ (resp.~$a_i>0$, resp.~$a_i<0$) is
independent of the basis. The difference $n-n_0$ is called the
\emph{rank} of $q$.

The \emph{orthogonal} of a subspace $U$ of $V$ is the
subspace $U^\perp$ given by
\[
 U^\perp:=\{\ux\in V\,;\, b(\ux,\uy)=0 \text{ for each }\uy\in U\}.
\]
In particular the subspace $\ker(q):=V^\perp$ is called
the \emph{kernel} of $q$.  It has dimension $n_0$.  We say
that $q$ is \emph{non-degenerate} if $\ker(q)=\{0\}$ and
\emph{degenerate} otherwise.  In general, we say that a
subspace $U$ of $V$ is \emph{non-degenerate} if
$U\cap U^\perp=\{0\}$, that it is \emph{totally isotropic}
if $U\subseteq U^\perp$, and that it is \emph{anisotropic}
if it contains no zero of $q$ else than $0$.
A \emph{hyperbolic plane} of $V$ is any non-degenerate and
non-anisotropic subspace of $V$ of dimension $2$. Equivalently,
this is a subspace $H$ of $V$ which admits a basis $\{\ux,\uy\}$
with $q(\ux)=q(\uy)=0$ and $b(\ux,\uy)\neq 0$.   We say that
subspaces $U$ and $W$ of $V$ are \emph{orthogonal}
if $W\subseteq U^\perp$.  Finally we say that a subspace $W$
of $V$ is an \emph{orthogonal sum} of subspaces $W_1,\dots,W_s$
if these subspaces are pairwise orthogonal and if $W$ is their
direct sum.  We then express this property as
$W = W_1 \perp \cdots \perp W_s$.  A theorem
of Witt \cite[Chapter XIV, \S 5, Corollary 5]{La1984} tells
us that there is a unique integer $m\ge 0$, called
the \emph{Witt index} of $q$ such that
\[
 V = \ker(q) \perp H_1\perp \cdots \perp H_m \perp W
\]
where $H_1,\dots,H_m$ are hyperbolic planes of $V$ and $W$
is an anisotropic subspace of $V$.  Another characterization
of $m$ is that all maximal totally isotropic subspaces of $V$
have dimension $m+\dim_\bQ\ker(q)$.

%
%

\section{Equivalent forms}
\label{sec:equiv}

Two quadratic forms $q$ and $\tq$ on $\bQ^{n+1}$ are said to be
\emph{equivalent} if $\tq=q\circ T$ for some invertible linear
operator $T$ on $\bQ^{n+1}$.  Then $q$ and $\tq$ have the same rank
and the same Witt index.  Moreover, the extended quadratic forms
$q_\bR$ and $\tq_\bR$ on $\bR^{n+1}$ satisfy
$\tq_\bR = q_\bR\circ T_\bR$ where $T_\bR$ denotes the invertible
linear operator on $\bR^{n+1}$ extending $T$.  For the associated
zero sets $Z=Z(q_\bR)$ and $\tZ=Z(\tq_\bR)$ in $\bP^n(\bR)$, this
implies that
\begin{equation}
 \label{equiv:Z}
 Z=T_\bR(\tZ),
 \quad
 \Zli=T_\bR(\tZ^\mathrm{li})
 \et
 Z\cap\bP^n(\bQ)=T(\tZ\cap\bP^n(\bQ)),
\end{equation}
using the same symbols $T$ (resp.~$T_\bR$) to denote the automorphism
of $\bP^n(\bQ)$ (resp.~$\bP^n(\bR)$) induced by $T$ (resp.~$T_\bR$).

Standard arguments as in \cite[Section 2]{Rconic} also yield
the following estimates.

\begin{lemma}
\label{equiv:lemma}
Let $\xi\in\bP^n(\bR)$ and let $E$ be a non-empty subset
of $\bP^n(\bQ)$.  Put $\txi=T_\bR(\xi)$ and $\tE=T(E)$ for
some $T\in\GL_{n+1}(\bQ)$. Then, for each $X\ge 1$, we have
\[
 \cD_\txi(X;\tE) \asymp \cD_\xi(X;E)
\]
with implied constants depending only on $T$.
\end{lemma}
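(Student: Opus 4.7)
The plan is to transfer the problem across $T$ by setting up a bijection between primitive integer representatives on the two sides and showing that this bijection preserves both the norm $\norm{\cdot}$ and the function $D_\xi$ up to multiplicative constants depending only on $T$. To begin, I fix a representative $\uxi \in \bR^{n+1}$ of $\xi$ so that $\tuxi := T_\bR(\uxi)$ represents $\txi$, and I choose positive integers $D, D'$ such that both $DT$ and $D'T^{-1}$ have entries in $\bZ$. Since $T_\bR$ is a fixed invertible linear operator on $\bR^{n+1}$, one has $\norm{T_\bR(\uu)} \asymp \norm{\uu}$ and $\norm{(\tbigwedge^2 T_\bR)(\uu \wedge \uv)} \asymp \norm{\uu \wedge \uv}$ with constants depending only on $T$; in particular $\norm{\tuxi} \asymp \norm{\uxi}$.

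Given a primitive $\ux \in \bZ^{n+1}$ with $[\ux] \in E$, the integer vector $DT(\ux)$ represents the class $T([\ux]) \in \tE$, so I can write $DT(\ux) = k\,\tux$, where $\tux \in \bZ^{n+1}$ is the primitive representative of $T([\ux])$ and $k \in \bZ\setminus\{0\}$. Symmetrically, $D'T^{-1}(\tux) = \ell\,\ux$ for some $\ell \in \bZ\setminus\{0\}$, and composing these two identities yields $k\ell\,\ux = DD'\,\ux$, whence $k\ell = DD'$ and in particular $|k|, |\ell| \le DD'$. This uniform bound is the decisive point: from $\tux = (D/k)\,T_\bR(\ux)$ one deduces $\norm{\tux} \asymp \norm{\ux}$, and from the identity $\tux \wedge \tuxi = (D/k)\,(\tbigwedge^2 T_\bR)(\ux \wedge \uxi)$ one deduces $\norm{\tux \wedge \tuxi} \asymp \norm{\ux \wedge \uxi}$, hence $D_\txi(\tux) \asymp D_\xi(\ux)$, all with constants depending only on $T$.

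To conclude, given $X \ge 1$, I pick a primitive $\ux$ with $[\ux] \in E$, $\norm{\ux} \le X$, and $D_\xi(\ux) = \cD_\xi(X;E)$. Its image $\tux$ satisfies $\norm{\tux} \le c\,X$ and $D_\txi(\tux) \le c\,\cD_\xi(X;E)$ for some $c = c(T)$, so that $\cD_\txi(cX; \tE) \le c\,\cD_\xi(X;E)$; the symmetric inequality is obtained by running the same argument with $T^{-1}$ in place of $T$, and combining these with the monotonicity of $\cD$ (which absorbs the rescaling $X \mapsto cX$ into the implied constants) produces the claimed relation $\cD_\txi(X;\tE) \asymp \cD_\xi(X;E)$. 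The one non-routine step is the uniform bound on $|k|$ and $|\ell|$; once these are controlled via the relation $k\ell = DD'$, the rest is standard, using only that $T_\bR$ and its second exterior power are bilipschitz on their respective Euclidean spaces.
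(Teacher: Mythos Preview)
Your argument follows the standard route that the paper alludes to (the paper gives no proof, citing only ``standard arguments as in \cite[Section~2]{Rconic}''): set up the bijection between primitive integer representatives on the two sides via $DT$ and $D'T^{-1}$, use $k\ell=DD'$ to bound the scaling factors uniformly, and conclude that $\norm{\tux}\asymp\norm{\ux}$ and $D_{\txi}(\tux)\asymp D_\xi(\ux)$. All of this is correct and is exactly the intended computation; it yields the pair of shifted inequalities
\[
 \cD_\txi(cX;\tE)\ \ll\ \cD_\xi(X;E)
 \et
 \cD_\xi(c'X;E)\ \ll\ \cD_\txi(X;\tE).
\]

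The only problematic sentence is the last one: monotonicity of $X\mapsto\cD_\xi(X;E)$ does \emph{not} absorb the rescaling $X\mapsto cX$ into the implied constants. The function $\cD_\xi(\cdot;E)$ is non-increasing but can drop by an arbitrarily large factor at a single height (take for instance $E$ consisting of two rational points whose primitive representatives have very different norms and very different $D_\xi$-values). The jump locations for $\cD_\txi(\cdot;\tE)$ are shifted relative to those for $\cD_\xi(\cdot;E)$ by factors bounded in terms of $T$, and in the window between a jump of one function and the corresponding jump of the other the ratio $\cD_\txi(X;\tE)/\cD_\xi(X;E)$ need not be bounded by constants depending only on $T$. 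So the literal pointwise assertion $\cD_\txi(X;\tE)\asymp\cD_\xi(X;E)$ is not obtainable from your argument, and indeed fails for general $E$.

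This is really a looseness in the lemma's formulation rather than a defect in your reasoning. The shifted inequalities you do prove are precisely what every application in the paper needs: all uses of the lemma go through statements of the type $\limsup_{X\to\infty}X^\alpha\cD_\xi(X;E)<\infty$ (or $>0$), and these are clearly invariant under replacing $X$ by $cX$. You should either stop at the shifted inequalities and note that they suffice for the intended applications, or rephrase the conclusion accordingly.
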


Combining this lemma with the preceding observations, we deduce
that, if Theorem \ref{results:thm2} holds for some quadratic
hypersurface $Z$ attached to a quadratic form $q$ on $\bQ^{n+1}$,
then it also holds for the quadratic hypersurface $\tZ$ attached
to any quadratic form $\tq$ that is equivalent to $q$ or more
generally to $aq$ for some $a\in\Qmult$.  This reduction will be
useful in the proof of Theorem \ref{results:thm2} (ii).

Keeping the same notation, we also deduce from \eqref{equiv:Z}
that $Z$ (resp.~$\Zli$) is not empty if and only if the same is
true of $\tZ$ (resp.~$\tZ^\mathrm{li}$).  This observation
yields the following criterion.

\begin{proposition}
\label{equiv:prop:Zli}
Let $q\colon\bQ^{n+1}\to\bQ$ be a (non-zero) quadratic form, and
let $Z=Z(q_\bR)$.  Then $\Zli\neq\emptyset$ if
and only if there exists $a\in\bQ^\times$ such that $aq$ is
equivalent to
\begin{equation}
 \label{equiv:eq:forme_can1}
 t_0^2-a_1t_1^2-\cdots-a_{n}t_{n}^2
\end{equation}
for some integers $a_1,\dots,a_{n}$ where $a_1>0$ is not a square,
or equivalent to
\begin{equation}
 \label{equiv:eq:forme_can2}
 t_0t_1-a_2t_2^2-\cdots-a_{n}t_{n}^2
\end{equation}
for some integers $a_2,\dots,a_{n}$ with $a_2\neq0$.
\end{proposition}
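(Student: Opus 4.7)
The plan is to establish each direction of the equivalence separately. First observe that both the conclusion (existence of $a\in\bQ^\times$ making $aq$ equivalent to one of the canonical forms) and the hypothesis $\Zli\neq\emptyset$ are invariant under replacing $q$ by $a'q$ (for any $a'\in\bQ^\times$) and under applying an invertible rational linear change of coordinates: $Z(q_\bR)$ is unchanged by scaling, and a nonzero real vector is $\bQ$-generic (i.e.\ has $\bQ$-linearly independent coordinates) iff it lies in no proper rational subspace of $\bR^{n+1}$, a basis-independent condition. It therefore suffices, in each direction, to handle $q$ in a convenient rational shape.

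For $(\Leftarrow)$, I would exhibit a $\bQ$-generic real zero for each canonical form. For \eqref{equiv:eq:forme_can1}, pick $\xi_2,\dots,\xi_n\in\bR$ algebraically independent over $\bQ$ and small enough that $R:=a_1+a_2\xi_2^2+\cdots+a_n\xi_n^2>0$ (possible since $a_1>0$), set $\xi_1=1$ and $\xi_0=\sqrt R$. The $\bQ$-linear independence of $(\xi_0,\xi_1,\dots,\xi_n)$ then reduces to showing that $a_1+\sum_{i\ge 2}a_ix_i^2$ is not a square in $\bQ(x_2,\dots,x_n)$, which is a short polynomial computation whose conclusion uses essentially that $a_1>0$ is not a rational square. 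For \eqref{equiv:eq:forme_can2}, set $\xi_2=1$, pick $\alpha$ and $\xi_3,\dots,\xi_n$ algebraically independent over $\bQ$ with the latter small enough to ensure $B:=a_2+\sum_{i\ge 3}a_i\xi_i^2\neq 0$, and put $\xi_0=\alpha$ and $\xi_1=B/\alpha$; an analogous transcendence argument using $a_2\neq 0$ delivers $\bQ$-linear independence.

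For $(\Rightarrow)$, fix $\uxi\in\Zli$. Since $q\neq 0$, choose $\ue_0\in\bQ^{n+1}$ with $q(\ue_0)=:\beta\neq 0$ and replace $q$ by $\tilde q=q/\beta$. The subspace $\ue_0^\perp$ is a proper rational hyperplane (the linear form $v\mapsto b(v,\ue_0)$ is nonzero on $\ue_0$), so $\bQ$-genericity of $\uxi$ forces $s\neq 0$ in the decomposition $\uxi=s\ue_0+\uxi'$ with $\uxi'\in\ue_0^\perp\otimes\bR$. Diagonalising $\tilde q$ on $\ue_0^\perp$ in an orthogonal rational basis $(\ue_1,\dots,\ue_n)$ and then rescaling each $\ue_i$ by a rational to clear denominators puts $\tilde q$ in the shape $t_0^2+c_1t_1^2+\cdots+c_nt_n^2$ with $c_i\in\bZ$. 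The identity $s^2+\sum c_i\eta_i^2=0$ together with $s\neq 0$ forces some $c_i<0$. If some negative $c_i$ has $-c_i\notin(\bQ^\times)^2$, I reorder the basis to place it first and set $a_j:=-c_j$, arriving at \eqref{equiv:eq:forme_can1}. Otherwise every negative $c_i$ equals $-d_i^2$ for some $d_i\in\bQ^\times$; taking $c_1=-d_1^2$, the rational change of variables $(u_0,u_1)=(t_0+d_1t_1,t_0-d_1t_1)$ converts $t_0^2-d_1^2t_1^2$ into $u_0u_1$, so $\tilde q$ becomes $u_0u_1+c_2t_2^2+\cdots+c_nt_n^2$. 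The possibility $c_2=\cdots=c_n=0$ is excluded: otherwise $Z(\tilde q_\bR)$ would be $\{u_0=0\}\cup\{u_1=0\}$, a union of two proper rational hyperplanes containing no $\bQ$-generic point, contradicting $\Zli\neq\emptyset$. Reordering so that $c_2\neq 0$ and setting $a_j:=-c_j$ then delivers \eqref{equiv:eq:forme_can2}.

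The only genuinely delicate step is the polynomial computation underlying $(\Leftarrow)$, which I would carry out carefully: showing that a rational quadratic polynomial of the shape $a_1+\sum_{i\ge 2}a_ix_i^2$ (resp.\ its analogue in the second case) is not a square in a purely transcendental extension of $\bQ$. A degree comparison reduces this to the non-existence of a degree-$1$ polynomial $p=c+\sum d_ix_i\in\bQ[x_2,\dots,x_n]$ with $p^2$ equal to the given form, and matching coefficients forces either $c^2=a_1$ (ruled out because $a_1$ is not a rational square) or a similar contradiction with $a_2\neq 0$ in the second case. This is where the arithmetic hypotheses on $a_1$ and $a_2$ enter essentially.
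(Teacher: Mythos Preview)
Your proof is correct and follows essentially the same strategy as the paper: diagonalise over $\bQ$ for the forward direction and use algebraic independence of a generic real point for the converse. The paper's implementation is marginally slicker---it diagonalises globally from the outset (rather than first fixing a non-isotropic $\ue_0$), and for $(\Leftarrow)$ it takes $\xi_1,\dots,\xi_n$ all algebraically independent and handles both canonical forms uniformly by observing that the substituted polynomial $\tq(c_1t_1+\cdots+c_nt_n,t_1,\dots,t_n)$ has a nonzero coefficient ($c_1^2-a_1$ for \eqref{equiv:eq:forme_can1}, $-a_2$ for \eqref{equiv:eq:forme_can2})---but the underlying ideas are the same as yours.
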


Note that the two cases are not mutually exclusive.  However,
in the second case, the Witt index of $q$ is at least $1$.

\begin{proof}
Suppose first that $\Zli\neq\emptyset$.  Then $q$ is equivalent
to $a_0t_0^2+\cdots+a_nt_n^2$ for some rational numbers $a_0,\dots,a_n$
among which at least one is positive and at least one is negative.
By permuting the variables if necessary we may assume that $a_0>0$
and $a_1<0$.  If $-a_0a_1$ is a square, the polynomial $a_0t_0^2+a_1t_1^2$
is not irreducible over $\bQ$ and then we may also assume that $a_2\neq 0$.
Hence by a further diagonal change of variables we obtain that $a_0q$
is equivalent to $t_0^2-a'_1t_1^2-\cdots-a'_{n}t_{n}^2$ for some
integers $a'_1,\dots,a'_n$ satisfying $a'_1>0$ and also $a'_2\neq 0$
when $a'_1$ is a square.  In the latter case $t_0^2-a'_1t_1^2$ is
equivalent to $t_0t_1$ and so $a_0q$ is equivalent to
$t_0t_1-a'_2t_2^2-\cdots-a'_{n}t_{n}^2$ where $a_2'\neq 0$.

Conversely, assume that $aq$ is equivalent to the quadratic form $\tq$
given by \eqref{equiv:eq:forme_can1} or \eqref{equiv:eq:forme_can2} for
an appropriate choice of coefficients.  Then $\tq_\bR$ admits
a zero $\uxi=(\xi_0,\xi_1,\dots,\xi_n)$ in $\bR^{n+1}$ with
$\xi_1,\dots,\xi_n$ algebraically independent over $\bQ$. If the
coordinates of $\uxi$ are not linearly independent over $\bQ$, then
$\xi_0=c_1\xi_1+\cdots+c_n\xi_n$ for some $c_1,\dots,c_n\in\bQ$ and
so $(\xi_1,\dots,\xi_n)$ is a zero of the polynomial
$\tq(c_1t_1+\cdots+c_nt_n,t_1,\dots,t_n)$. However, this polynomial
is non-zero because, if $\tq$ is given
by \eqref{equiv:eq:forme_can1}, its coefficient of $t_1^2$
is $c_1^2-a_1\neq 0$ and, if $\tq$ is given
by \eqref{equiv:eq:forme_can2}, its coefficient of $t_2^2$
is $-a_2\neq 0$.  This contradiction means that $[\uxi]$ belongs to
$(Z(\tq_\bR))^\mathrm{li}$ and so $\Zli\neq\emptyset$.
\end{proof}

The next corollary explains the assertion made in the introduction,
to the effect that the results \cite[Theorem 1a and 2a]{KM2019}
of Kleinbock and Moshchevitin yield $\lambdahat(Z)\le 1/\rho_n$
for any quadratic hypersurface $Z$ of $\bP^n(\bR)$ with
$\Zli\neq\emptyset$, defined by a non-degenerate quadratic form
on $\bQ^{n+1}$ of Witt index $m\le 1$.  Note, on the way, that if
$m=0$ then we necessarily have $n\le 3$ by a theorem of Meyer
\cite[Chapter IV, Section 3.2, Corollary 2]{Se1973}.

\begin{cor}
\label{equiv:cor:KM}
Let $q\colon\bQ^{n+1}\to\bQ$ be a non-degenerate quadratic form, let
$m$ be its Witt index, and let $Z=Z(q_\bR)$.  Then we have both
$m\le 1$ and $\Zli\neq\emptyset$ if and only if there exists
$a\in\bQ^\times$ such that $aq$ is equivalent to
$t_0^2-f(t_1,\dots,t_{n})$ for some quadratic form $f\colon\bQ^n\to\bQ$
with no non-trivial zero and $f(1,0,\dots,0)>0$, or equivalent to
$t_0t_1-g(t_2,\dots,t_{n})$ for some quadratic form $g\colon\bQ^{n-1}\to\bQ$
with no non-trivial zero.
\end{cor}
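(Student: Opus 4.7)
The plan is to deduce the corollary from Proposition~\ref{equiv:prop:Zli} using the Witt decomposition, treating the two possible values $m=0$ and $m=1$ separately.

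For the forward implication, suppose $q$ is non-degenerate with $m\le 1$ and $\Zli\neq\emptyset$. In the case $m=0$, the form $q$ is anisotropic over $\bQ$ and $q_\bR$ is indefinite (since it has real zeros). I would diagonalize $q$ over $\bQ$ and reorder the basis so that the first diagonal coefficient $a_0$ is positive and the second $a_1$ is negative. Dividing by $a_0$ puts $a_0^{-1}q$ in the form $t_0^2 - f(t_1,\dots,t_n)$ with $f(\ue_1)=-a_1/a_0>0$, and anisotropy of $q$ forces $f$ to have no non-trivial zero: any such zero $(c_1,\dots,c_n)$ would yield the non-zero $q$-isotropic vector $(0,c_1,\dots,c_n)$. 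In the case $m=1$, Witt's theorem combined with non-degeneracy gives an orthogonal decomposition $\bQ^{n+1} = H \perp W$ with $H$ a hyperbolic plane and $W$ anisotropic. Picking a basis $(\ue_0,\ue_1)$ of $H$ with $q(\ue_0)=q(\ue_1)=0$ and $b(\ue_0,\ue_1)=1$ identifies $q|_H$ with $t_0t_1$, so $q$ is equivalent to $t_0t_1-g(t_2,\dots,t_n)$ with $g=-q|_W$ anisotropic, and we may take $a=1$.

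For the backward implication, non-emptiness of $\Zli$ is obtained by reducing to Proposition~\ref{equiv:prop:Zli}: after diagonalizing $f$ (respectively $g$) and rescaling the variables to clear denominators, the first canonical form of the corollary lands either in the proposition's Case A (if the coefficient of $t_1^2$ in the diagonalized $f$ is a positive non-square) or, if this coefficient equals $c^2$ for some $c\in\Qmult$, the factorization $t_0^2-c^2t_1^2=(t_0-ct_1)(t_0+ct_1)$ lets us pass to Case B after a linear change of variables; the second canonical form is already in Case B. Non-degeneracy of $q$ is then immediate: $f$ (respectively $g$) being anisotropic forces its non-degeneracy, since any kernel vector is isotropic, and $t_0t_1$ is itself non-degenerate. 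For the bound $m\le 1$, I would analyze the Witt decomposition of the canonical form directly. In the first case $q$ is equivalent to $\langle 1\rangle \perp (-f)$ with $-f$ anisotropic; either $f$ does not represent $1$ over $\bQ$, giving $q$ itself anisotropic and $m=0$, or $f$ is equivalent to $\langle 1\rangle \perp f''$ with $f''$ anisotropic (by Witt cancellation), in which case $q$ becomes equivalent to $H \perp (-f'')$ and $m=1$. In the second case $q$ is already equivalent to $H \perp (-g)$ with $-g$ anisotropic, so $m=1$ directly.

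The main technical obstacle is checking these linear-algebraic adjustments carefully: in the forward direction for $m=0$, the correct ordering of diagonal coefficients to secure both $a_0>0$ and $f(\ue_1)>0$; and in the backward direction the square/non-square dichotomy when reducing to Proposition~\ref{equiv:prop:Zli}. The remaining steps amount to standard bookkeeping with the Witt decomposition.
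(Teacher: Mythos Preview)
Your argument is correct. The paper's proof is a one-liner: it asserts that a form of type \eqref{equiv:eq:forme_can1} (resp.~\eqref{equiv:eq:forme_can2}) is non-degenerate of Witt index at most $1$ if and only if the residual diagonal form $a_1t_1^2+\cdots+a_nt_n^2$ (resp.~$a_2t_2^2+\cdots+a_nt_n^2$) is anisotropic, and combines this with Proposition~\ref{equiv:prop:Zli}. Your backward implication follows exactly this route after diagonalizing $f$ and $g$. For the forward implication, however, you argue differently: rather than first passing through the normal forms of Proposition~\ref{equiv:prop:Zli} and then extracting anisotropy of the residual form, you split on the value $m\in\{0,1\}$ and invoke the Witt decomposition directly. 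This is a genuine simplification in the case $m=1$, since a form of type \eqref{equiv:eq:forme_can1} can in fact have Witt index $1$ while $a_1t_1^2+\cdots+a_nt_n^2$ is isotropic (e.g.\ $t_0^2-2t_1^2+t_2^2-t_3^2$); the paper's route then tacitly requires switching to a representation of type \eqref{equiv:eq:forme_can2}, a step your argument bypasses entirely.

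Two small points. First, when you write that $f\cong\langle 1\rangle\perp f''$ with $f''$ anisotropic ``by Witt cancellation'', the anisotropy of $f''$ is simply because $f''$ is a subform of the anisotropic form $f$; Witt cancellation is not the relevant tool here. Second, when you diagonalize $f$ in the backward direction you should say explicitly why the leading diagonal coefficient may be taken positive: this is because $f$ represents the positive value $f(1,0,\dots,0)$, hence at least one diagonal entry is positive and can be permuted into the first slot before you run the square/non-square dichotomy.
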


\begin{proof}
This follows from the fact that a quadratic form on $\bQ^{n+1}$ of the
form \eqref{equiv:eq:forme_can1} (resp.~\eqref{equiv:eq:forme_can2})
is non-degenerate of Witt index $0$ or $1$ if and only if the quadratic
form $a_1t_1^2+\cdots+a_nt_n^2$ (resp.~$a_2t_2^2+\cdots+a_nt_n^2$) has
no non-trivial zero on $\bQ^n$ (resp.~$\bQ^{n-1}$).
\end{proof}

We conclude with the following result which derives from similar
considerations.

\begin{proposition}
\label{equiv:prop:evitement}
Let $q\colon V\to\bQ$ be a quadratic form of Witt index at least $1$
and rank at least $3$ on some finite dimensional vector space $V$
over $\bQ$.  Then, for any finite set of proper subspaces of $V$,
there is a zero of $q$ in $V$ which lies outside of their union.
\end{proposition}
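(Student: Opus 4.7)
The plan is to apply the polynomial map $\psi(\cdot, \uy_0)$ of Lemma \ref{forms:lemma:psi} for a well-chosen isotropic $\uy_0$ in order to parametrize zeros of $q$ by points of $V$, and then to argue that the locus of parameters whose image falls in some $U_i$ is contained in the zero set of a non-zero polynomial on $V$. Since the Witt index of $q$ is at least $1$, the Witt decomposition recalled in Section \ref{sec:forms} yields a hyperbolic plane $H_1$ inside $V$, and I would pick any non-zero $\uy_0 \in H_1$ with $q(\uy_0) = 0$. The non-degeneracy of $H_1$ ensures that the linear form $\ell_0(\ux) := b(\ux, \uy_0)$ on $V$ is not identically zero. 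By Lemma \ref{forms:lemma:psi}, the map
\[
 \uz(\ux) := \psi(\ux, \uy_0) = \ell_0(\ux)\, \ux - q(\ux)\, \uy_0
\]
sends all of $V$ into the zero set of $q$, and a one-line verification (comparing the $\ux$ and $\uy_0$ components) shows that $\uz(\ux) \neq 0$ whenever $\ell_0(\ux) \neq 0$.

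The key step, and the place where the rank $\ge 3$ hypothesis enters, is to show that for each proper subspace $U_i \subsetneq V$ the set $\{\ux \in V : \uz(\ux) \in U_i\}$ is contained in the vanishing locus on $V$ of a non-zero polynomial of degree at most $2$. Upon fixing a non-zero linear functional $\ell_i$ on $V$ that vanishes on $U_i$, this set is contained in the zero set of
\[
 F_i(\ux) := \ell_i(\uz(\ux)) = \ell_0(\ux)\, \ell_i(\ux) - \ell_i(\uy_0)\, q(\ux).
\]
If $\uy_0 \in U_i$, then $\ell_i(\uy_0) = 0$ and $F_i$ is the product of two non-zero linear forms, hence non-zero. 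If $\uy_0 \notin U_i$, the identical vanishing of $F_i$ would force $q$ to be a scalar multiple of the product $\ell_0 \ell_i$; but any quadratic form that factors as a product of two linear forms has rank at most $2$, contradicting the hypothesis on $q$. This factorization argument is where I expect the main subtlety to lie.

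To finish, I would consider the single polynomial $P(\ux) := \ell_0(\ux)\, F_1(\ux) \cdots F_s(\ux)$ on $V$. It is a non-zero polynomial on a finite-dimensional vector space over the infinite field $\bQ$, so there exists $\ux \in V$ with $P(\ux) \neq 0$. For any such $\ux$, the vector $\uz(\ux)$ is a non-zero zero of $q$ satisfying $\ell_i(\uz(\ux)) = F_i(\ux) \neq 0$ for every $i$, hence $\uz(\ux)$ lies outside $U_1 \cup \cdots \cup U_s$, as required.
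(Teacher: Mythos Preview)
Your proof is correct and takes a genuinely different route from the paper. The paper first reduces to a canonical coordinate form $q(t_0,\dots,t_n)=t_0t_1-a_2t_2^2-\cdots-a_nt_n^2$ with $a_2\neq 0$, and then parametrizes the quadric by the explicit affine chart $\varphi(t_2,\dots,t_n)=(1,a_2t_2^2+\cdots+a_nt_n^2,t_2,\dots,t_n)$; the rank $\ge 3$ hypothesis enters as the condition $a_2\neq 0$, which forces the components of $\varphi$ to be linearly independent polynomials. You instead work coordinate-free, using the paper's own map $\psi(\,\cdot\,,\uy_0)$ from Lemma~\ref{forms:lemma:psi} to parametrize the isotropic cone by all of $V$, and the rank hypothesis enters through the observation that a quadratic form factoring as a product of two linear forms has rank at most $2$. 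Your approach buys a cleaner, basis-free argument and ties in nicely with the central role $\psi$ plays elsewhere in the paper; the paper's approach buys an explicit one-to-one parametrization by a lower-dimensional space, which is perhaps more concrete but requires the preliminary reduction of Proposition~\ref{equiv:prop:Zli}. Both arguments conclude in the same way, by taking the product of the resulting non-zero polynomials and invoking that $\bQ$ is infinite.
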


\begin{proof}
We have $\dim_\bQ(V)=n+1$ for some integer $n\ge 2$. By composing
$q$ with a suitable linear isomorphism from $\bQ^{n+1}$ to $V$, we
may assume that $V=\bQ^{n+1}$ and that
$q(t_0,\dots,t_n)=t_0t_1-a_2t_2^2-\cdots-a_nt_n^2$
for some $a_2,\dots,a_n\in\bQ$ with $a_2\neq 0$.
Then the polynomial map $\varphi\colon\bQ^{n-1}\to\bQ^{n+1}$ given by
\[
 \varphi(t_2,\dots,t_n)=(1,a_2t_2^2+\cdots+a_nt_n^2,t_2,\dots,t_n)
\]
has image in the zero set of $q$ in $\bQ^{n+1}$.  Moreover its components
are linearly independent over $\bQ$ as elements of $\bQ[t_2,\dots,t_n]$.
Thus the composite $\ell\circ\varphi\colon\bQ^{n+1}\to\bQ$ is a non-zero
polynomial function of degree at most $2$ for each non-zero linear
form $\ell$ on $\bQ^{n+1}$.  Consequently $\prod_{i=1}^s\ell_i\circ\varphi$
is a non-zero polynomial function of degree at most $2s$ on $\bQ^{n+1}$
for any non-zero linear forms $\ell_1,\dots,\ell_s$.  This means that
there is a point in the image of $\varphi$, thus a zero of $q$, which
avoids $\bigcup_{i=1}^s\ker(\ell_i)$.  The conclusion follows because
any proper subspace of $\bQ^{n+1}$ is contained in the kernel
of a non-zero linear form on $\bQ^{n+1}$.
\end{proof}

Note that the above result also follows from the general avoidance lemma
\cite[Prop.~7.5]{GR2017} of Gaudron and R\'emond, upon showing first
that the set of zeros of $q$ in $V$ is not itself a union of
hyperplanes of $V$.

%
%

\section{A complement to the inequality of Marnat and Moschevitin}
\label{sec:NPR}

We first explain how the definitions of Section \ref{sec:results}
relate to those used in \cite{NPR}.  To this end,
we fix a point $\uxi=(\xi_0,\dots,\xi_n)\in\bR^{n+1}$ with
$\xi_0\neq 0$, and we set $\xi = [\uxi]$.  Then, for any
$\ux=(x_0,\dots,x_n)\in\bZ^{n+1}$, we have
\begin{equation}
 \label{NPR:def:Lxi}
 D_\xi(\ux)
   \asymp \max_{0\le j< i\le n}|x_j\xi_i-x_i\xi_j|
   \asymp L_\uxi(\ux) := \max_{1\le i\le n}|x_0\xi_i-x_i\xi_0|
\end{equation}
where the implied constants depend only on $\uxi$ (and $n$).
This yields
\[
 \cD_\xi(X)
   \asymp
   \cL_\uxi(X) := \min\Big\{L_\uxi(\ux)\,;\,
             \ux\in\bZ^{n+1}\setminus\{0\}\ \text{and}\ \|\ux\|\le X\Big\}
 \quad (X\ge 1).
\]
Thus we may replace $\cD_\xi$ by $\cL_\uxi$ in the definition of both
$\lambda(\xi)$ and $\lambdahat(\xi)$ as well as in the statement of
Theorem \ref{results:thm1}.  In particular, $\lambda(\xi)$
(resp.~$\lambdahat(\xi)$) is the supremum of all $\lambda\ge 0$ such that
\[
 \liminf_{X\to\infty}X^\lambda\cL_\uxi(X) < \infty
 \quad
(\text{resp.}~\limsup_{X\to\infty}X^\lambda\cL_\uxi(X) < \infty).
\]
More generally, for any subset $S$ of $\bZ^{n+1}$ and any $X\ge 1$,
we define
\[
 \cL_\uxi(X;S)
 := \min\Big\{L_\uxi(\ux)\,;\,
             \ux\in S\setminus\{0\}
             \ \text{and}\
             \|\ux\|\le X\Big\}
\]
(with the convention that $\min\emptyset=-\infty$).  Then, for any
non-empty subset $E$ of $\bP^n(\bQ)$, we have
\[
 \cD_\xi(X;E)\asymp \cL_\uxi(X;S),
\]
where $S=\{\ux\in\bZ^{n+1}\setminus\{0\}\,;\,[\ux]\in E\}$.

We now quote the following general result from joint work with
Van~Nguyen \cite{NPR}; it will be our main tool in proving
Parts (i) and (iii) of Theorem \ref{results:thm2}.

\begin{theorem}
 \label{NPR:thm}
Let $\uxi$ be a point of\/ $\bR^{n+1}$ whose coordinates are
linearly independent over $\bQ$ and let $S\subseteq\bZ^{n+1}$.
Suppose that $n\ge 2$ and that there exist positive real numbers
$a$, $b$, $\alpha$, $\beta$ such that
\begin{equation}
 \label{NPR:thm:eq1}
 bX^{-\beta} \le \cL_\uxi(X;S) \le aX^{-\alpha}
\end{equation}
for each sufficiently large real number $X$.  Then we
have $\alpha\le \beta$ and
\begin{equation}
 \label{NPR:thm:eq2}
 \epsilon:= 1- \Big(\alpha + \frac{\alpha^2}{\beta}
                +\cdots+\frac{\alpha^n}{\beta^{n-1}}\Big) \ge 0.
\end{equation}
Moreover, there exists a constant $C>0$ which depends only
on $\uxi,a,b,\alpha,\beta$ with the following property. If
\begin{equation}
 \label{NPR:thm:eq3}
 \epsilon
  \le \frac{1}{4n}\Big(\frac{\alpha}{\beta}\Big)^n
      \min\{\alpha,\beta-\alpha\},
\end{equation}
then there is an unbounded sequence $(\uy_i)_{i\geq 0}$ of
non-zero integer points in $S$ which for each $i\ge 0$ satisfies
the following conditions:
\begin{itemize}
 \item[(i)]
  $\big|\alpha\log\norm{\uy_{i+1}} - \beta\log\norm{\uy_i}\big|
   \le C + 4\epsilon(\beta/\alpha)^n\log\norm{\uy_{i+1}}$;
 \smallskip
 \item[(ii)] $\big|\log L_\uxi(\uy_i) + \beta\log \norm{\uy_{i}}\big|
   \le C+4\epsilon(\beta/\alpha)^2\log \norm{\uy_i}$;
 \smallskip
 \item[(iii)] $\det(\uy_i,\dots,\uy_{i+n})\neq 0$;
 \smallskip
 \item[(iv)] there exists no $\ux\in S\cap\bZ^{n+1}\setminus\{0\}$
   with $\norm{\ux} < \norm{\uy_i}$ and $L_\uxi(\ux)\le L_\uxi(\uy_i)$.
\end{itemize}
\end{theorem}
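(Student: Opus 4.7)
The theorem naturally splits into an inequality part (showing $\alpha \le \beta$ and $\epsilon \ge 0$) and a construction part (producing the sequence $(\uy_i)$ under the smallness hypothesis \eqref{NPR:thm:eq3}). My plan is to handle both through a single family of \emph{minimal points} in $S$, defined recursively: take $\uy_0 \in S \setminus \{0\}$ of minimal norm, and let $\uy_{i+1} \in S \setminus \{0\}$ be a point of minimal norm with $L_\uxi(\uy_{i+1}) < L_\uxi(\uy_i)$ for each $i \ge 0$. The upper hypothesis in \eqref{NPR:thm:eq1} makes this well-defined with $\|\uy_i\| \to \infty$, while combining the upper and lower hypotheses immediately yields $\alpha \le \beta$ (otherwise $aX^{-\alpha} < bX^{-\beta}$ eventually) as well as the basic relations
\[
 L_i \le a\|\uy_i\|^{-\alpha},
 \qquad \|\uy_{i+1}\| \le (a/L_i)^{1/\alpha},
 \qquad L_i \gg \|\uy_{i+1}\|^{-\beta},
\]
where $L_i := L_\uxi(\uy_i)$ and the last inequality comes from the fact that just below scale $\|\uy_{i+1}\|$ no better approximation than $L_i$ exists in $S$. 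Condition (iv) of the theorem is immediate from this definition of $\uy_i$.

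For the inequality $\epsilon \ge 0$, I would choose an index $i$ such that $\uy_i, \dots, \uy_{i+n}$ are linearly independent---such $i$ must exist, since the $\bQ$-linear independence of the coordinates of $\uxi$ prevents the minimal sequence from staying inside any proper rational subspace. Applying the unimodular change of basis $\ux \mapsto (x_0,\, \xi_1 x_0 - x_1, \dots, \xi_n x_0 - x_n)$, each $\uy_k$ becomes a vector with first coordinate $y_{k,0}$ and remaining coordinates of magnitude $O(L_k)$. Expanding $|\det(\uy_i,\dots,\uy_{i+n})| \ge 1$ along its first row then gives
\[
 1 \ll \|\uy_{i+n}\|\, L_i L_{i+1} \cdots L_{i+n-1}.
\]
Combining this with the basic relations above and taking logarithms produces a system of affine inequalities in the quantities $\log\|\uy_{i+j}\|$ and $\log L_{i+j}$; solving by the standard Marnat--Moshchevitin-style iteration, which alternates the norm-side bound $\|\uy_{i+j+1}\| \le (a/L_{i+j})^{1/\alpha}$ with the $L$-side bound $L_{i+j} \gg \|\uy_{i+j+1}\|^{-\beta}$, then yields $\alpha + \alpha^2/\beta + \cdots + \alpha^n/\beta^{n-1} \le 1$.

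For the construction phase, the smallness hypothesis \eqref{NPR:thm:eq3} forces each step of the chain of inequalities above to be almost an equality, with slack proportional to $\epsilon$ scaled by powers of $\beta/\alpha$. Propagating these estimates backwards then yields the sharp quantitative assertions (i) and (ii), with the prescribed error terms $4\epsilon(\beta/\alpha)^n\log\|\uy_{i+1}\|$ and $4\epsilon(\beta/\alpha)^2\log\|\uy_i\|$. The hardest step is (iii), namely $\det(\uy_i,\dots,\uy_{i+n}) \neq 0$ for \emph{every} $i$, not just infinitely often. I would argue by contradiction: if some block $\uy_i,\dots,\uy_{i+n}$ lay inside a proper rational subspace $W \subsetneq \bQ^{n+1}$, then the restriction of $L_\uxi$ to $W \cap \bZ^{n+1}$ would satisfy approximation bounds of the same shape in strictly lower dimension, and the lower-dimensional version of $\epsilon \ge 0$ would give a constraint on $(\alpha,\beta)$ strictly stronger than what holds in dimension $n$; the smallness hypothesis \eqref{NPR:thm:eq3} converts this into a genuine contradiction. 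Making this argument uniform in $i$ and controlling the quantitative gap against the allowed slack is the main technical obstacle.
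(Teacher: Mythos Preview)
The paper does not prove this theorem. It is quoted verbatim from the separate reference \cite{NPR} (the paper introduces it with ``We now quote the following general result from joint work with Van~Nguyen \cite{NPR}''), and only the corollary following it receives a proof here. There is therefore no proof in the present paper against which to compare your proposal.

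That said, your outline is broadly the right shape for the argument as carried out in \cite{NPR}: one works with the sequence of minimal points of $\uxi$ relative to $S$, uses the determinant estimate $1\le|\det(\uy_i,\dots,\uy_{i+n})|\ll\norm{\uy_{i+n}}L_i\cdots L_{i+n-1}$, and iterates the two-sided bounds to reach \eqref{NPR:thm:eq2}. A few points in your sketch would need tightening before it becomes a proof. First, for $\epsilon\ge 0$ it is not enough to find a single index $i$ with $\uy_i,\dots,\uy_{i+n}$ independent; you need such indices cofinally, so that the additive constants in the determinant inequality can be absorbed by taking $\norm{\uy_i}$ large. Second, your argument for (iii) via a ``lower-dimensional version of $\epsilon\ge 0$'' is the delicate part: restricting $L_\uxi$ to a proper rational subspace $W$ does not automatically put you back in the same framework with $n$ replaced by $\dim W-1$, because the lower bound $bX^{-\beta}\le\cL_\uxi(X;S)$ need not persist for $S\cap W$, and the coordinates of the projection of $\uxi$ to $W$ need not be $\bQ$-linearly independent. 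The actual proof in \cite{NPR} handles (iii) by a more careful inductive argument on blocks of consecutive minimal points, controlling the dimension of the span step by step rather than appealing to a black-box lower-dimensional statement.
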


Let $\xi=[\uxi]$.  Then, as explained
in \cite{NPR}, the first assertion of the theorem yields
the inequality
\[
 \lambdahat(\xi) + \frac{\lambdahat(\xi)^2}{\lambda(\xi)}+\cdots
      + \frac{\lambdahat(\xi)^{n}}{\lambda(\xi)^{n-1}}
     \leq 1
\]
due to Marnat and Moschevitin \cite[Theorem 1]{MM2019}, where the ratio
$\lambdahat(\xi)/\lambda(\xi)$ is interpreted as $0$ when $\lambda(\xi)=\infty$.
Note that these authors work in an affine setting which amounts to
write $\xi=(1:\xi_1:\cdots:\xi_n)$ with $\xi_0=1$.

\begin{cor}
 \label{NPR:cor}
With the same notation, suppose further that $\epsilon=0$.  Then we have
\[
 \limsup_{X\to\infty} X^\alpha\cL_\uxi(X;S) > 0.
\]
\end{cor}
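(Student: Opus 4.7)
Plan: I would prove the corollary by contradiction, supposing $\limsup_{X\to\infty} X^\alpha \cL_\uxi(X;S) = 0$ and using the sequence produced by Theorem \ref{NPR:thm} to derive a contradiction with $\epsilon = 0$.

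Since $\epsilon = 0$, condition \eqref{NPR:thm:eq3} holds trivially, so Theorem \ref{NPR:thm} furnishes an unbounded sequence $(\uy_i)_{i\ge 0}$ of non-zero points of $S$ satisfying (i)--(iv). The $\epsilon$-dependent error terms in (i) and (ii) vanish, leaving the two-sided estimates
\begin{equation*}
  \bigl|\alpha\log\|\uy_{i+1}\| - \beta\log\|\uy_i\|\bigr| \le C
  \et
  \bigl|\log L_\uxi(\uy_i) + \beta\log\|\uy_i\|\bigr| \le C.
\end{equation*}
Multiplying the lower halves gives the uniform bound $L_\uxi(\uy_i)\,\|\uy_{i+1}\|^\alpha \ge e^{-2C}$, which says that the candidate rational approximation $\uy_i$ has $L_\uxi$-value comparable to $\|\uy_{i+1}\|^{-\alpha}$.

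Next, I would set $X_i := \|\uy_{i+1}\| - 1$ so that $\|\uy_i\| \le X_i < \|\uy_{i+1}\|$ and $X_i \asymp \|\uy_{i+1}\|$. Since $\uy_i$ is admissible in the minimum defining $\cL_\uxi(X_i;S)$, this already produces $\cL_\uxi(X_i;S) \le L_\uxi(\uy_i) \asymp X_i^{-\alpha}$. The contradiction hypothesis then furnishes, for every $\eta>0$ and every $i$ sufficiently large, an integer point $\ux_i \in S\setminus\{0\}$ with $\|\ux_i\| \le X_i$ and $L_\uxi(\ux_i) \le \eta X_i^{-\alpha}$. For $\eta$ small enough compared with $e^{-2C}$ we have $L_\uxi(\ux_i) < L_\uxi(\uy_i)$, and property (iv) applied to $\uy_i$ forces $\|\ux_i\| \ge \|\uy_i\|$. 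I thus obtain intermediate points $\ux_i$ with $\|\uy_i\| \le \|\ux_i\| < \|\uy_{i+1}\|$ whose $L_\uxi$-value is arbitrarily small compared with $L_\uxi(\uy_i)$.

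The main obstacle will be to convert this abundance of much better intermediate approximations into a contradiction with $\epsilon = 0$. The mechanism I intend to use is to revisit the Minkowski--determinant estimate underlying the ``$\epsilon\ge 0$'' conclusion of Theorem \ref{NPR:thm}: the non-vanishing of $\det(\uy_i,\dots,\uy_{i+n})$ in (iii), combined with Hadamard's inequality after decomposing each $\uy_{i+j}$ along the direction of $\uxi$ and its orthogonal complement, yields a bound of the form $1 \le C' \|\uy_{i+n}\| \prod_{j=0}^{n-1} L_\uxi(\uy_{i+j})$. Substituting some $\ux_{i+j}$ in place of $\uy_{i+j}$ in this product, while preserving linear independence by a standard perturbation within the block $(\uy_i,\dots,\uy_{i+n})$, strictly improves this inequality by a factor that can be made arbitrarily small with $\eta$. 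Tracing this through the same derivation turns the non-strict $\epsilon \ge 0$ into a strict $\epsilon > 0$, contradicting the hypothesis $\epsilon = 0$ and completing the proof.
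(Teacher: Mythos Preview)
Your proposal uses the same core ingredient as the paper---a Davenport--Schmidt determinant bound on $n+1$ points drawn from the block $\uy_i,\dots,\uy_{i+n}$ with one point replaced by a better approximation---but there is a genuine gap in the substitution step, and the target of the contradiction is misframed.

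The misframing: $\epsilon$ is a number fixed by $\alpha$ and $\beta$ alone; no substitution of points can ``turn $\epsilon\ge 0$ into $\epsilon>0$''.  What a correct substitution yields is the inequality $1\ll X^\alpha\cL_\uxi(X;S)$ along an unbounded sequence of $X$, which is the desired $\limsup>0$ directly; the paper does not argue by contradiction at all.

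The gap: your phrase ``preserving linear independence by a standard perturbation within the block'' hides the only delicate point.  Nothing you have established prevents your $\ux_{i+j}$ from lying in $\langle\uy_i,\dots,\widehat{\uy_{i+j}},\dots,\uy_{i+n}\rangle_\bQ$, and you cannot perturb an integer point without destroying either integrality or the size bounds.  The paper secures linear independence differently: it takes $X=\norm{\uy_{i+n}}/2$, lets $\ux$ be a minimizer realising $\cL_\uxi(X;S)$, and applies (iv) at index $i+n$ (not at index $i$, as you do) to get $L_\uxi(\ux)>L_\uxi(\uy_{i+n})$.  This forces $\ux$ and $\uy_{i+n}$ to be linearly independent; since $\uy_i,\dots,\uy_{i+n}$ form a basis by (iii), one may then drop some $\uy_j$ with $i\le j<i+n$ so that $\uy_i,\dots,\widehat{\uy_j},\dots,\uy_{i+n},\ux$ remains a basis.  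The determinant bound then reads
\[
 1\le |\det(\uy_i,\dots,\widehat{\uy_j},\dots,\uy_{i+n},\ux)|
   \ll \norm{\uy_{i+n}}\,L_\uxi(\uy_i)\cdots L_\uxi(\uy_{i+n-2})\,L_\uxi(\ux)
   \ll X^\alpha\cL_\uxi(X;S),
\]
where the final exponent computation uses $\epsilon=0$.  Your use of (iv) at index $i$ only pins down the norm of $\ux_i$; it does not produce the needed independence.

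A minor omission: you should dispose of the case $\alpha=\beta$ first (the hypothesis $\cL_\uxi(X;S)\ge bX^{-\beta}$ already gives it), since in that case the growth estimate (i) does not guarantee the $\norm{\uy_i}$ are strictly increasing.
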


This follows from \cite[Theorem 1.2]{NPR} but it is instructive to derive
it directly from the above theorem since similar arguments will be
needed later.  The proof given below uses standard estimates for a
determinant as in \cite[Lemma 9]{DS1969} (see Section \ref{sec:est}).

\begin{proof}
If $\alpha=\beta$, the conclusion is immediate since \eqref{NPR:thm:eq1}
then yields $X^\alpha \cL_\uxi(X;S)\ge b$ for each large
enough $X$.  We may therefore assume that $\alpha<\beta$.
Since $\epsilon=0$, the sequence $(\uy_i)_{i\ge 0}$ provided by
Theorem \ref{NPR:thm} satisfies
\[
 \norm{\uy_{i+1}} \asymp \norm{\uy_i}^{\beta/\alpha}
 \et
 L_\uxi(\uy_i) \asymp \norm{\uy_i}^{-\beta}
\]
with implied constants that are independent of $i$.
Put $X=\norm{\uy_{i+n}}/2$ for some arbitrarily large index $i$
and choose a non-zero point $\ux$ in $S$ with
$\norm{\ux}\le X$, such that $L_\uxi(\ux)=\cL_\uxi(X;S)$.
Assuming $i$ large enough, the points $\uy_i,\dots,\uy_{i+n}$ are
linearly independent over $\bQ$ (by (iii)) with
\[
 \norm{\uy_i}<\norm{\uy_{i+1}}<\dots<\norm{\uy_{i+n}}
 \et
 L_\uxi(\uy_i)>L_\uxi(\uy_{i+1})>\cdots>L_\uxi(\uy_{i+n})
\]
(since $\beta/\alpha>1$).  Moreover, since $\norm{\ux}
<\norm{\uy_{i+n}}$, we have $L_\uxi(\ux) > L_\uxi(\uy_{i+n})$
by the minimality condition (iv).  In particular, the points
$\uy_{i+n}$ and $\ux$ are linearly
independent and so there exists an index $j$ with $i\le j<i+n$
such that $\uy_i,\dots,\widehat{\uy_j},\dots,\uy_{i+n},\ux$ form a
basis of $\bQ^{n+1}$, where the hat on $\uy_j$ means that this point
is omitted from the list.  As $\uy_{i+n}$ realizes the maximum norm
and the smallest value for $L_\uxi$ among these integer points,
we deduce that
\begin{align*}
 1  \le |\det(\uy_i,\dots,\widehat{\uy_j},\dots,\uy_{i+n},\ux)|
   &\ll \norm{\uy_{i+n}}
        L_\uxi(\uy_i)\cdots\widehat{L_\uxi(\uy_j)}
           \cdots L_\uxi(\uy_{i+n-1})L_\uxi(\ux)\\
   &\ll \norm{\uy_{i+n}}
        L_\uxi(\uy_i)\cdots L_\uxi(\uy_{i+n-2})L_\uxi(\ux)\\
   &\ll X^{1-\beta(\alpha^n/\beta^n+\cdots+\alpha^2/\beta^2)}\cL_\uxi(X;S)
   \,=\, X^\alpha\cL_\uxi(X;S).
\end{align*}
The conclusion follows by letting $i$ tend to infinity.
\end{proof}

%
%

\section{Metrical estimates}
\label{sec:est}

Let $\uxi=(\xi_0,\dots,\xi_n)\in\bR^{n+1}$ with $\xi_0\neq 0$,
and let $q$ be a quadratic form on $\bQ^{n+1}$ with associated symmetric
bilinear form $b$.  In this section, we collect several estimates
for polynomial maps which follow from Taylor expansions about the
point $\uxi$.  We start with the following generalization of
\cite[Lemma 4.2]{Rconic}.

\begin{lemma}
 \label{est:lemma}
Suppose that $q(\uxi)=0$.  Then, for each $\ux,\uy\in\bZ^{n+1}$,
we have
\begin{itemize}
 \item[(i)] $|b(\ux,\uy)|\ll \|\uy\|L_\uxi(\ux)+\|\ux\|L_\uxi(\uy)$,
 \item[(ii)] $|q(\ux)|\ll \|\ux\| L_\uxi(\ux)$,
\end{itemize}
and the point $\uz:=\psi(\ux,\uy)=b(\ux,\uy)\ux-q(\ux)\uy$ satisfies
\begin{itemize}
 \item[(iii)] $L_\uxi(\uz)
     \ll \|\uy\|L_\uxi(\ux)^2 + \|\ux\|L_\uxi(\ux)L_\uxi(\uy)$,
 \item[(iv)] $\|\uz\|
     \ll \|\ux\|^2L_\uxi(\uy)+\|\uy\|L_\uxi(\ux)^2
         +\|\ux\|L_\uxi(\ux)L_\uxi(\uy)$,
\end{itemize}
all implied constants depending only on $q$ and $\uxi$.
\end{lemma}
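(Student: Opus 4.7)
The plan is to use Taylor-style expansions of $b$ and $q$ about $\uxi$, crucially exploiting the hypothesis $q(\uxi)=0$. For any $\ux\in\bZ^{n+1}$, write $\ux=(x_0/\xi_0)\uxi+\ux'$, where $\ux'$ has $0$-th coordinate zero and remaining coordinates $x'_i=-(x_0\xi_i-x_i\xi_0)/\xi_0$, so that $\|\ux'\|\ll L_\uxi(\ux)$ with an implied constant depending only on $\uxi$; decompose $\uy=(y_0/\xi_0)\uxi+\uy'$ analogously. All subsequent estimates will be obtained by substituting these decompositions and tracking the surviving terms.

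For (i) I would expand $b(\ux,\uy)$ by bilinearity. Since $b(\uxi,\uxi)=2q(\uxi)=0$, only the three mixed terms $(x_0/\xi_0)b(\uxi,\uy')$, $(y_0/\xi_0)b(\uxi,\ux')$, $b(\ux',\uy')$ survive, and each is bounded directly by a product of a norm and an $L_\uxi$; the residual $L_\uxi(\ux)L_\uxi(\uy)$ is absorbed using $L_\uxi(\ua)\ll\|\ua\|$. Part (ii) is then (i) applied with $\uy=\ux$, together with $2q(\ux)=b(\ux,\ux)$. For (iii), I would compute
\[
 z_0\xi_i - z_i\xi_0 = b(\ux,\uy)(x_0\xi_i-x_i\xi_0) - q(\ux)(y_0\xi_i-y_i\xi_0)
\]
by expanding the $i$-th coordinate of $\uz=b(\ux,\uy)\ux-q(\ux)\uy$, and bound the two scalar factors via (i) and (ii).

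The main obstacle will be (iv): a naive estimate of $\|\uz\|$ based on (i) and (ii) applied coordinatewise produces the unwanted cross-term $\|\ux\|\,\|\uy\|\,L_\uxi(\ux)$, which is not controlled by the three terms on the right. To avoid this, I would again split $\uz=(z_0/\xi_0)\uxi+\uz'$; then $\|\uz'\|\ll L_\uxi(\uz)$ is controlled by (iii), and it suffices to bound $|z_0|=|b(\ux,\uy)x_0-q(\ux)y_0|$ sharply. Substituting the expansions
\begin{align*}
 b(\ux,\uy) &= (x_0/\xi_0)b(\uxi,\uy')+(y_0/\xi_0)b(\uxi,\ux')+b(\ux',\uy'),\\
 q(\ux) &= (x_0/\xi_0)b(\uxi,\ux')+q(\ux'),
\end{align*}
which both follow from bilinearity and $q(\uxi)=0$, the term $(x_0y_0/\xi_0)b(\uxi,\ux')$ arising in $b(\ux,\uy)x_0$ cancels exactly with the analogous term in $q(\ux)y_0$, leaving
\[
 z_0 = (x_0^2/\xi_0)b(\uxi,\uy') + x_0\,b(\ux',\uy') - y_0\,q(\ux').
\]
The three surviving summands are bounded respectively by $\|\ux\|^2L_\uxi(\uy)$, $\|\ux\|L_\uxi(\ux)L_\uxi(\uy)$ and $\|\uy\|L_\uxi(\ux)^2$; combining with the estimate for $\|\uz'\|$ from (iii) yields (iv). The whole argument is a direct generalization of \cite[Lemma 4.2]{Rconic} with $n=2$ replaced by arbitrary $n$, the cancellation in $z_0$ being the essential new verification.
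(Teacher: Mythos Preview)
Your proposal is correct and follows essentially the same approach as the paper: the paper normalizes $\xi_0=1$ and writes $\Delta\ux=\ux-x_0\uxi$ (your $\ux'$), then carries out exactly the same bilinear expansions for (i)--(iii) and the same cancellation of the $x_0y_0\,b(\uxi,\Delta\ux)$ term in $z_0$ for (iv). The only cosmetic difference is that you keep $\xi_0$ explicit throughout instead of normalizing it to $1$.
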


\begin{proof}
We may assume that $\xi_0=1$.  Then, upon denoting by $x_0$
the first coordinate of $\ux$, we have $L_\uxi(\ux)\asymp \|\Delta\ux\|$
where $\Delta\ux=\ux-x_0\uxi$.  Using similar notation
for $\uy$ and $\uz$, we find
\begin{equation}
 \label{est:lemma:psi:eq1}
 b(\ux,\uy)
  = b(x_0\uxi+\Delta\ux,y_0\uxi+\Delta\uy)
  = y_0b(\Delta\ux,\uxi)+x_0b(\uxi,\Delta\uy)+b(\Delta\ux,\Delta\uy)
\end{equation}
which yields (i).  As a special case, we obtain
\begin{equation}
 \label{est:lemma:psi:eq2}
 q(\ux)=\frac{1}{2}b(\ux,\ux)
  = x_0b(\Delta\ux,\uxi)+q(\Delta\ux)
\end{equation}
which in turn yields (ii).  Since $\uz=b(\ux,\uy)\ux-q(\ux)\uy$, we get
\[
 L_\uxi(\uz)
  \asymp \|\Delta\uz\|
  = \|b(\ux,\uy)\Delta\ux - q(\ux)\Delta\uy\|
  \ll |b(\ux,\uy)|L_\uxi(\ux) + |q(\ux)|L_\uxi(\uy)
\]
which, together with the estimates (i) and (ii), leads to (iii).
Finally, using \eqref{est:lemma:psi:eq1} and
\eqref{est:lemma:psi:eq2}, we obtain
\[
 z_0 = b(\ux,\uy)x_0-q(\ux)y_0
     = x_0^2b(\uxi,\Delta\uy)+x_0b(\Delta\ux,\Delta\uy)
       -y_0q(\Delta\ux)
\]
which implies $|z_0|\ll \|\ux\|^2L_\uxi(\uy)+\|\uy\|L_\uxi(\ux)^2$
and (iv) follows since $\|\uz\|\le \|z_0\uxi\|+\|\Delta\uz\|$.
\end{proof}

Similarly, using the multilinearity of the wedge product, we obtain
\begin{equation}
\label{est:eq:wedge}
 \norm{\ux_1\wedge\dots\wedge\ux_k}
 \ll \sum_{i=1}^k
      \norm{\ux_i}
      L_\uxi(\ux_1)\cdots\widehat{L_\uxi(\ux_i)}\cdots L_\uxi(\ux_k)
\end{equation}
for any $k=1,\dots,n+1$ and any $\ux_1,\dots,\ux_k\in\bZ^{n+1}$,
with implied constants depending only on $\uxi$ and $n$.
When $\norm{\ux_k} = \max_i \norm{\ux_i}$ and
$L_\uxi(\ux_k) = \min_i L_\uxi(\ux_i)$, this simplifies to
\[
 \norm{\ux_1\wedge\dots\wedge\ux_k}
 \ll \norm{\ux_k}L_\uxi(\ux_1)\cdots L_\uxi(\ux_{k-1}).
\]
For $k=n+1$, these represent upper bounds for
$|\det(\ux_1,\dots,\ux_{n+1})|$ as in \cite[Lemma 9]{DS1969}.

We will also need the inequality
\begin{equation}
\label{est:eq:contraction}
 \norm{(\uu\cdot\ux)\uy-(\uu\cdot\uy)\ux}
   \le \norm{\uu}\norm{\ux\wedge\uy}
\end{equation}
valid for any $\uu,\ux,\uy\in\bR^{n+1}$, where the dot
represents the usual scalar product in $\bR^{n+1}$.  This
is immediate when $\uu$ is the point
$\ue_1=(1,0,\dots,0)$, and the general case follows
by applying to all vectors a rotation
mapping $\uu$ to $\norm{\uu}\ue_1$.

We conclude with the following simple criterion of linear
independence.

\begin{lemma}
 \label{est:lemma:li}
Suppose that $\lim_{i\to\infty} D_\xi(\ux_i)=0$ for a point
$\xi$ in $\bP^n(\bR)$ and a sequence of non-zero integer
points $(\ux_i)_{i\ge 1}$ in $\bZ^{n+1}$.  Then, the representatives
$\uxi$ of $\xi$ in $\bR^{n+1}$ have linearly independent
coordinates over $\bQ$ if and only if the subsequence
$(\ux_i)_{i\ge i_0}$ spans $\bQ^{n+1}$ for each $i_0\ge 1$.
\end{lemma}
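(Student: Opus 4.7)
The plan is to prove each implication by contradiction, exploiting the geometric interpretation $D_\xi(\ux)=\norm{\ux}\,\dist(\uxi,\bR\ux)/\norm{\uxi}$ that follows directly from the definitions in Section \ref{sec:results}.

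For the forward direction, assume $\uxi$ has $\bQ$-linearly independent coordinates but, contrary to the claim, some tail $(\ux_i)_{i\ge i_0}$ fails to span $\bQ^{n+1}$. Then this tail lies in a proper rational subspace $W\subsetneq\bQ^{n+1}$, and the linear independence hypothesis forces $d:=\dist(\uxi,W_\bR)>0$. Since each line $\bR\ux_i$ (for $i\ge i_0$) is contained in $W_\bR$, we obtain $D_\xi(\ux_i)\ge d\,\norm{\ux_i}/\norm{\uxi}\ge d/\norm{\uxi}>0$, contradicting $D_\xi(\ux_i)\to 0$.

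For the converse, suppose the coordinates of $\uxi$ satisfy a nontrivial $\bQ$-linear relation, so $\ua\cdot\uxi=0$ for some non-zero $\ua\in\bZ^{n+1}$. Applying the contraction inequality \eqref{est:eq:contraction} with $\uu=\ua$, $\ux=\ux_i$, $\uy=\uxi$ yields
\[
 |\ua\cdot\ux_i|\,\norm{\uxi}
 \le \norm{\ua}\,\norm{\ux_i\wedge\uxi}
 = \norm{\ua}\,\norm{\uxi}\,D_\xi(\ux_i),
\]
so $|\ua\cdot\ux_i|\le \norm{\ua}\,D_\xi(\ux_i)\to 0$. As $\ua\cdot\ux_i$ is an integer, it vanishes for all $i$ beyond some $i_0$, placing the corresponding tail inside the proper rational subspace $\{\uy\in\bQ^{n+1}:\ua\cdot\uy=0\}$ and contradicting the spanning hypothesis.

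No genuine obstacle arises: each half is a one-line estimate. The positivity of $\dist(\uxi,W_\bR)$ is simply the geometric reformulation of $\bQ$-linear independence of the coordinates of $\uxi$, and the converse uses the ready-made inequality \eqref{est:eq:contraction} from Section \ref{sec:est} together with the integrality of $\ua\cdot\ux_i$, which is the only arithmetic ingredient in the argument.
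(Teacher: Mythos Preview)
Your proof is correct and follows essentially the same line as the paper's. The only cosmetic difference is in your forward direction: you phrase the argument via the Euclidean distance $\dist(\uxi,W_\bR)>0$, whereas the paper applies the contraction inequality \eqref{est:eq:contraction} a second time (choosing $\uu$ orthogonal to the non-spanning tail) to obtain $|\uu\cdot\uxi|\,\norm{\ux_i}\le \norm{\uu}\,D_\xi(\ux_i)$ and hence $\uu\cdot\uxi=0$; the two arguments are equivalent since a normal vector to $W_\bR$ plays the role of your distance~$d$.
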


\begin{proof}
There is no loss of generality in choosing $\uxi$ with
$\norm{\uxi}=1$.  Then, for each $\uu\in\bZ^{n+1}$, the
inequality \eqref{est:eq:contraction} yields
\begin{equation}
 \label{est:lemme:li:eq}
 \norm{(\uu\cdot\ux_i)\uxi-(\uu\cdot\uxi)\ux_i}
 \le \norm{\uu} \norm{\ux_i\wedge\uxi}
 = \norm{\uu}D_\xi(\ux_i)
 \quad (i\ge 1).
\end{equation}
If $\uxi$ has linearly dependent coordinates, we may choose
$\uu\neq 0$ with $\uu\cdot\uxi=0$.  Then \eqref{est:lemme:li:eq}
simplifies to $|\uu\cdot\ux_i|\le \norm{\uu}D_\xi(\ux_i)$.  So
the integer $\uu\cdot\ux_i$ vanishes for all sufficiently
large $i$, say for each $i\ge i_0$, and therefore $(\ux_i)_{i\ge i_0}$
does not span $\bQ^{n+1}$.  Conversely, if the subsequence
$(\ux_i)_{i\ge i_0}$ does not span $\bQ^{n+1}$ for some $i_0\ge 1$,
we may choose $\uu\neq 0$ such that $\uu\cdot\ux_i=0$ for each $i\ge i_0$.
Then \eqref{est:lemme:li:eq} yields $|\uu\cdot\uxi|\le
|\uu\cdot\uxi|\,\norm{\ux_i}\le \norm{\uu}D_\xi(\ux_i)$ for
each $i\ge i_0$.  This implies that $\uu\cdot\uxi=0$, and so $\uxi$
has linearly dependent coordinates.
\end{proof}

%
%

\section{Approximation by rational points outside of $Z$}
\label{sec:outside}

The goal of this section is to prove Parts (i) and (iii)
of Theorem \ref{results:thm2}.  So, we assume that $n\ge 2$
and we fix a quadratic form $q$ on $\bQ^{n+1}$.
We denote by $m$ the Witt index of $q$ and by
$Z=Z(q_\bR)\subseteq\bP^n(\bR)$ the corresponding quadratic
hypersurface.  We also assume that $\Zli$ is not empty, and
define
\[
 E=\bP^n(\bQ)\setminus Z
 \et
 S=\{\ux\in\bZ^{n+1}\,;\, q(\ux)\neq 0\}
  =\{\ux\in\bZ^{n+1}\setminus\{0\}\,;\, [\ux]\in E\}.
\]
Moreover, we assume that $q$ is integer valued on $\bZ^{n+1}$ upon
multiplying it by a suitable positive integer if necessary
(this does not affect the hypersurface $Z$ nor the Witt index $m$).
Finally, we set $\rho=\rho_n$ (as defined in Theorem \ref{intro:thm}).
We start with the following simple but crucial observation.

\begin{lemma}
\label{outside:lemma}
Let $\uxi\in\bR^{n+1}$ be a representative of a point $\xi\in Z$.
Then, there is a constant $b>0$ such that $\cL_\uxi(X;S)\ge bX^{-1}$
for each sufficiently large $X$.
\end{lemma}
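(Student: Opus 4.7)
The plan is quite short, essentially a two-line argument combining integrality of $q$ with the Taylor expansion estimate already proved in Lemma \ref{est:lemma}(ii). Since we have arranged that $q$ is integer-valued on $\bZ^{n+1}$, every point $\ux\in S$ satisfies $q(\ux)\in\bZ\setminus\{0\}$ and therefore $|q(\ux)|\ge 1$. This is the only reason to insist on $q$ being integer-valued.

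Next, since $\xi\in Z$ we have $q_\bR(\uxi)=0$, so Lemma \ref{est:lemma}(ii) applies to the representative $\uxi$ and yields a constant $c>0$, depending only on $q$ and $\uxi$, such that
\[
 |q(\ux)| \le c\,\|\ux\|\,L_\uxi(\ux)
 \quad\text{for every } \ux\in\bZ^{n+1}.
\]
Combining these two inequalities, for any $\ux\in S$ with $\|\ux\|\le X$ we obtain
\[
 1 \le |q(\ux)| \le c\,\|\ux\|\,L_\uxi(\ux) \le c\,X\,L_\uxi(\ux),
\]
so $L_\uxi(\ux)\ge (cX)^{-1}$. Taking the minimum over all such $\ux$ and setting $b:=1/c$ gives $\cL_\uxi(X;S)\ge bX^{-1}$.

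The only point requiring a brief comment is that $\cL_\uxi(X;S)$ is defined as $\min\emptyset=-\infty$ when there is no candidate $\ux$; however $S$ is non-empty because $q\ne 0$ cannot vanish on all of $\bZ^{n+1}$, so for all sufficiently large $X$ the set of $\ux\in S$ with $\|\ux\|\le X$ is non-empty and the lower bound $bX^{-1}$ is meaningful. I do not expect any real obstacle: the entire content of the lemma is the compatibility between the quadratic vanishing of $q$ at $\uxi$ (which forces $|q(\ux)|$ to be small when $\ux$ is close to the line $\bR\uxi$, namely controlled by the bilinear quantity $\|\ux\|L_\uxi(\ux)$) and the unavoidable integrality lower bound $|q(\ux)|\ge 1$ for points outside $Z$.
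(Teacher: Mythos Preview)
Your proof is correct and follows essentially the same approach as the paper's own proof: both use integrality of $q$ on $\bZ^{n+1}$ to get $|q(\ux)|\ge 1$ for $\ux\in S$, then apply Lemma~\ref{est:lemma}(ii) to convert this into the lower bound $L_\uxi(\ux)\ge b\|\ux\|^{-1}$. Your additional remark about non-emptiness of $S$ for large $X$ is a nice clarification that the paper glosses over.
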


\begin{proof}
For each $\ux\in S$, we have $|q(\ux)|\ge 1$ since $q(\ux)$ is
a non-zero integer. By Lemma \ref{est:lemma}, we also have
$|q(\ux)|\le b^{-1}\norm{\ux}\cL_\uxi(\ux)$ for a constant $b>0$
depending only on $\uxi$ and $q$.  Thus $L_\uxi(\ux)\ge
b\norm{\ux}^{-1}$ for each $\ux\in S$ and so
$\cL_\uxi(X;S)\ge bX^{-1}$ for each $X\ge 1$.
\end{proof}

We can now prove Theorem \ref{results:thm2} (i).

\begin{proposition}
\label{outside:prop1}
For each $\xi\in\Zli$, we have
$\disp\limsup_{X\to\infty} X^{1/\rho}\cD_\xi(X;E) > 0$.
\end{proposition}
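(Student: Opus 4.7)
The plan is to reformulate the statement using the language of Section \ref{sec:NPR}. Fix a representative $\uxi = (\xi_0, \dots, \xi_n)$ of $\xi$ with $\xi_0 \neq 0$, and set $S = \{\ux \in \bZ^{n+1} \setminus \{0\} : [\ux] \in E\} = \{\ux \in \bZ^{n+1} : q(\ux) \neq 0\}$ (assuming $q$ integer-valued, up to scaling). Since $\cD_\xi(X;E) \asymp \cL_\uxi(X;S)$, it suffices to show $\limsup_{X \to \infty} X^{1/\rho}\cL_\uxi(X;S) > 0$.

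I would argue by contradiction. Suppose $\limsup_{X \to \infty} X^{1/\rho}\cL_\uxi(X;S) = 0$. Then for any prescribed $a > 0$, eventually $\cL_\uxi(X;S) \le aX^{-1/\rho}$. On the other hand, Lemma \ref{outside:lemma} supplies a constant $b > 0$ such that $\cL_\uxi(X;S) \ge bX^{-1}$ for all large $X$. This verifies the two-sided hypothesis \eqref{NPR:thm:eq1} of Theorem \ref{NPR:thm} with parameters $\alpha = 1/\rho$ and $\beta = 1$. The key algebraic point is that $\rho = \rho_n$ is, by design, the positive root of $x^n - (x^{n-1} + \cdots + x + 1)$, which after dividing by $x^n$ rewrites as $1/\rho + 1/\rho^2 + \cdots + 1/\rho^n = 1$. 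Consequently
\[
\epsilon = 1 - \left(\alpha + \frac{\alpha^2}{\beta} + \cdots + \frac{\alpha^n}{\beta^{n-1}}\right)
         = 1 - \left(\frac{1}{\rho} + \cdots + \frac{1}{\rho^n}\right) = 0.
\]
Since $\uxi$ has $\bQ$-linearly independent coordinates (because $\xi \in \Zli$) and $n \ge 2$, Corollary \ref{NPR:cor} now applies and yields $\limsup_{X \to \infty} X^{1/\rho}\cL_\uxi(X;S) > 0$, contradicting the assumption.

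There is essentially no main obstacle: all heavy lifting is hidden inside Theorem \ref{NPR:thm} and Corollary \ref{NPR:cor}, and the only analytic input peculiar to this proposition is Lemma \ref{outside:lemma}, which forces $\beta = 1$ and captures the fact that rational points with $q(\ux)\neq 0$ cannot be too close to $\xi$ (since $q(\ux)$ is a nonzero integer). The only genuine verification is the numerical coincidence $\epsilon = 0$, which is precisely the defining identity for $\rho_n$ and pins the exponent $1/\rho_n$ as the critical threshold.
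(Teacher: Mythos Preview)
Your proof is correct and follows essentially the same route as the paper: both reduce to the equivalent quantity $\cL_\uxi(X;S)$, invoke Lemma \ref{outside:lemma} for the lower bound with $\beta=1$, observe that the defining relation for $\rho_n$ forces $\epsilon=0$ when $\alpha=1/\rho$, and then apply Corollary \ref{NPR:cor}. The only cosmetic difference is that the paper splits into the trivial case where $\cL_\uxi(X;S)>X^{-1/\rho}$ for arbitrarily large $X$ versus the case where the upper bound of \eqref{NPR:thm:eq1} holds with $a=1$, whereas you phrase the same dichotomy as a proof by contradiction.
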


\begin{proof}
Let $\uxi\in\bR^{n+1}$ be a representative of $\xi$.
If $\cL_\uxi(X;S) > X^{-1/\rho}$ for arbitrarily large values
of $X$, then $\limsup X^{1/\rho}\cL_\uxi(X;S) > 0$
and we are done since $\cD_\xi(X;E)\asymp\cL_\uxi(X;S)$
(see Section \ref{sec:est}).
Thus, we may assume that $\cL_\uxi(X;S) \le X^{-1/\rho}$
for each sufficiently large $X$.  Then, by the above lemma,
Condition \eqref{NPR:thm:eq1} of Theorem \ref{NPR:thm}
holds with $\alpha=1/\rho$, $\beta=1$, $a=1$ and some $b>0$.
By definition of $\rho=\rho_n$, the corresponding value for
$\epsilon$ is $1- (\alpha + \alpha^2 + \cdots + \alpha^n)=0$.
Thus Corollary \ref{NPR:cor} yields once again that
$\limsup X^{1/\rho}\cL_\uxi(X;S)>0$.
\end{proof}

Applying Theorem \ref{NPR:thm}, we also deduce the following
statement.

\begin{proposition}
\label{outside:prop2}
For each sufficiently small $\eta>0$, there exists $\delta>0$ with
the following property.  If a point $\xi\in\Zli$ satisfies
\begin{equation}
 \label{outside:prop2:eq1}
 \limsup_{X\to\infty}  X^{1/\rho-\delta}\cD_\xi(X;E) < \infty
\end{equation}
then there is an unbounded sequence $(\uy_i)_{i\geq 0}$ of primitive
integer points in $S$ which for each sufficiently large index $i$ satisfy
 \begin{itemize}
  \item[(i)]
   $\norm{\uy_{i}}^{\rho-\eta}
     \le \norm{\uy_{i+1}}
     \le \norm{\uy_{i}}^{\rho+\eta}$;
  \smallskip
  \item[(ii)]
   $D_\xi(\uy_i) \le \norm{\uy_{i}}^{-1+\eta}$;
  \smallskip
  \item[(iii)] $\det(\uy_i,\dots,\uy_{i+n})\neq 0$;
  \smallskip
  \item[(iv)] $\uy_{i+1}$ is a rational multiple of $\psi(\uy_i,\uy_{i-n})$.
\end{itemize}
\end{proposition}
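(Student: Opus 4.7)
My plan is to apply Theorem \ref{NPR:thm} to the set $S=\{\ux\in\bZ^{n+1}\,;\,q(\ux)\neq 0\}$ with $\alpha=1/\rho-\delta$ and $\beta=1$. Fix a representative $\uxi\in\bR^{n+1}$ of $\xi$. Via the relation $\cD_\xi(X;E)\asymp\cL_\uxi(X;S)$ recalled in Section \ref{sec:NPR}, the hypothesis \eqref{outside:prop2:eq1} supplies $\cL_\uxi(X;S)\le aX^{-\alpha}$ for large $X$, while Lemma \ref{outside:lemma} supplies $\cL_\uxi(X;S)\ge bX^{-1}$. The quantity $\epsilon=1-(\alpha+\alpha^2+\cdots+\alpha^n)$ is a polynomial in $\alpha$ vanishing at $\alpha=1/\rho$ (by the defining equation $\rho^n=1+\rho+\cdots+\rho^{n-1}$, upon dividing by $\rho^n$), so $\epsilon=O(\delta)$. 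For $\delta$ small enough the smallness condition \eqref{NPR:thm:eq3} is satisfied, and Theorem \ref{NPR:thm} delivers an unbounded sequence $(\uy_i)$ in $S$. After replacing each $\uy_i$ by its primitive representative on the same ray, properties (i)--(iv) of that theorem still hold.

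To deduce (i)--(iii) of the proposition, I choose $\delta$ small in terms of $\eta$. Dividing the inequality in Theorem \ref{NPR:thm}(i) by $\alpha$ gives $\log\|\uy_{i+1}\|=(\beta/\alpha)\log\|\uy_i\|+O(1)+O(\epsilon(\beta/\alpha)^{n+1}\log\|\uy_{i+1}\|)$, where $\beta/\alpha=1/(1/\rho-\delta)\to\rho$ and $\epsilon\to 0$ as $\delta\to 0$; this yields (i). Theorem \ref{NPR:thm}(ii) together with $D_\xi(\uy_i)\asymp L_\uxi(\uy_i)$ yields (ii), while (iii) is immediate.

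The crux is (iv). Set $\uz_i:=\psi(\uy_i,\uy_{i-n})=b(\uy_i,\uy_{i-n})\uy_i-q(\uy_i)\uy_{i-n}$. By Lemma \ref{forms:lemma:psi}, $q(\uz_i)=q(\uy_i)^2q(\uy_{i-n})$ is a non-zero integer (since $q$ is integer-valued and $\uy_i,\uy_{i-n}\in S$), so $\uz_i\in S$. Lemma \ref{est:lemma} combined with (i)--(ii) gives $|q(\uy_i)|\ll 1$ and $|b(\uy_i,\uy_{i-n})|\ll\|\uy_i\|^{1-1/\rho^n+O(\eta)}$; hence $\|\uz_i\|\ll\|\uy_i\|^{\rho+O(\eta)}\asymp\|\uy_{i+1}\|^{1+O(\eta)}$, using the characteristic identity $2-1/\rho^n=\rho$ obtained from the defining equation of $\rho$ upon multiplication by $\rho$. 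An analogous upper bound for $L_\uxi(\uz_i)$ is obtained in the same way.

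The hard part is the final rigidity step: showing that $\uz_i$ is a rational multiple of $\uy_{i+1}$. The intended strategy is to compare $\uz_i$ with the sequence $(\uy_j)$ via the minimality property Theorem \ref{NPR:thm}(iv): $\uz_i$ is a non-zero element of $S$ lying in the two-dimensional subspace $\bQ\uy_i+\bQ\uy_{i-n}$, with norm of order $\|\uy_{i+1}\|$, and one shows this forces $\uz_i$ to coincide up to a rational factor with $\uy_{i+1}$. The delicate points are (a) excluding the degenerate case $b(\uy_i,\uy_{i-n})=0$ (in which $\uz_i\propto\uy_{i-n}$), and (b) ruling out that $\uz_i$ is an off-sequence element of $S$ with similar approximation data. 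As a fallback, one may instead build the sequence recursively by declaring $\uy_{i+1}$ to be the primitive representative of $\psi(\uy_i,\uy_{i-n})$ for $i\ge n$ from an initial segment furnished by Theorem \ref{NPR:thm}, and then re-verify (i)--(iii) for the modified sequence, using the lower bound $L_\uxi(\uz)\gg\|\uz\|^{-1}$ valid for $\uz\in S$ to control the growth rate from below.
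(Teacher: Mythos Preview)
Your setup is correct and matches the paper: apply Theorem~\ref{NPR:thm} with $\alpha=1/\rho-\delta$ and $\beta=1$, use Lemma~\ref{outside:lemma} for the lower bound, note that $\epsilon=\cO_n(\delta)$ so that \eqref{NPR:thm:eq3} holds for small $\delta$, and read off (i)--(iii) with $\eta=\cO_n(\delta)$. Primitivity is automatic from the minimality condition~(iv) of that theorem, so your replacement step is harmless but unnecessary.

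The gap is in your treatment of (iv). With $\uz_i=\psi(\uy_i,\uy_{i-n})$, Lemma~\ref{est:lemma}(iii) gives
\[
 L_\uxi(\uz_i)\ll \norm{\uy_{i-n}}L_\uxi(\uy_i)^2+\norm{\uy_i}L_\uxi(\uy_i)L_\uxi(\uy_{i-n})
 \ll \norm{\uy_i}^{-1/\rho^n+\cO(\eta)},
\]
the second term dominating. This is of order $L_\uxi(\uy_{i-n})$, \emph{not} $L_\uxi(\uy_{i+1})\asymp\norm{\uy_i}^{-\rho+\cO(\eta)}$. So $\uz_i$ has norm comparable to $\norm{\uy_{i+1}}$ but approximation quality only comparable to that of $\uy_{i-n}$; no minimality or determinant argument can force $\uz_i$ onto the ray of $\uy_{i+1}$ from this data, and your concerns (a) and (b) are real obstacles that do not get resolved. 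Your fallback of redefining the sequence by the recursion also fails: the bound $L_\uxi(\uz_i)\gg\norm{\uz_i}^{-1}$ only yields $\norm{\uz_i}\gg\norm{\uy_i}^{1/\rho^n-\cO(\eta)}$, far short of the lower bound in (i), and (ii) for the new point fails by a factor of roughly $\norm{\uy_i}^{\rho-1/\rho^n}$.

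The paper goes \emph{backwards}. It sets $\uz=\psi(\uy_i,\uy_{i+1})$ and obtains from Lemma~\ref{est:lemma} that $\norm{\uz}\ll\norm{\uy_i}^{2-\rho+\cO_n(\delta)}$ and $L_\uxi(\uz)\ll\norm{\uy_i}^{\rho-2+\cO_n(\delta)}$. Since $2-\rho=1/\rho^n$, both quantities match those of $\uy_j$ with $j=i-n$. If $\uz$ were not a rational multiple of $\uy_j$, a determinant built from $\uy_{j-n},\dots,\widehat{\uy_{j-k}},\dots,\uy_j,\uz$ would be a non-zero integer bounded by a negative power of $\norm{\uy_j}$ once $\delta$ is small, a contradiction for large $i$. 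Hence $\psi(\uy_i,\uy_{i+1})$ is a rational multiple of $\uy_{i-n}$, and the involutive identity $\psi(\ux,\psi(\ux,\uy))=q(\ux)^2\uy$ of Lemma~\ref{forms:lemma:psi}, together with $q(\uy_i)\neq 0$, converts this into the desired statement that $\uy_{i+1}$ is a rational multiple of $\psi(\uy_i,\uy_{i-n})$.
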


Assuming that $\eta\in(0,\rho-1)$, Condition (ii) implies that
the sequence of points $[\uy_i]$ in $\bP^n(\bQ)$ converges
to $\xi$ in $\bP^n(\bR)$ because
\[
 \dist(\xi,[\uy_i])
  = \norm{\uy_i}^{-1}D_\xi(\uy_i)
  \le D_\xi(\uy_i),
\]
Moreover Condition (iv) shows that this sequence is uniquely
determined by its first terms since $\psi$ is a bi-homogeneous map.
As there are countably finite sequences in $\bP^n(\bQ)$,
we conclude that there are at most countably many points
$\xi\in\Zli$ which satisfy \eqref{outside:prop2:eq1}
for the corresponding $\delta$ and this proves
Theorem \ref{results:thm2} (iii).

\begin{proof}
Choose an arbitrarily small $\delta\in(0,1/2)$ and assume that a
point $\xi\in\Zli$ with representative $\uxi\in\bR^{n+1}$ satisfies
\eqref{outside:prop2:eq1}.  Then, this assumption together with
Lemma \ref{outside:lemma} implies that Condition \eqref{NPR:thm:eq1}
of Theorem \ref{NPR:thm} holds with $\alpha=1/\rho-\delta$,
$\beta=1$ and some $a,b>0$.  Since the corresponding
$\epsilon = 1- (\alpha + \cdots + \alpha^n)$
vanishes for $\alpha=1/\rho$, Condition \eqref{NPR:thm:eq3}
is also fulfilled if $\delta$ is small enough as a function of $n$
alone.  Then, the theorem provides an unbounded sequence
$(\uy_i)_{i\geq 0}$ of integer points in $S$ which are primitive
(because of their property (iv) of minimality) and which
satisfy the above conditions (i) to (iii)
with $\eta=\cO_n(\delta)$ for each
sufficiently large $i$, say for each $i\ge i_0$.

Finally, let $\uz=\psi(\uy_{i},\uy_{i+1})$ for some
$i\ge 2n+i_0$.  Using Lemma \ref{est:lemma}, we find that
\begin{align*}
 L_\uxi(\uz)
  &\ll \norm{\uy_{i+1}}L_\uxi(\uy_i)^2
       + \norm{\uy_i}L_\uxi(\uy_i)L_\uxi(\uy_{i+1})
   \ll \norm{\uy_i}^{\rho-2+\cO_n(\delta)},\\
 \norm{\uz}
  &\ll \norm{\uy_{i+1}}^2L_\uxi(\uy_i)
       + \norm{\uy_i}^{\rho-2+\cO_n(\delta)}
  \ll \norm{\uy_i}^{2-\rho+\cO_n(\delta)},
\end{align*}
with implied constants that are independent of $i$.  Since
$2-\rho=1/\rho^n$, this means that, for $j=i-n$, we have
\[
 \norm{\uz}\le \norm{\uy_j}^{1+\cO_n(\delta)}
 \et
 L_\uxi(\uz)
   \le \norm{\uy_j}^{-1+\cO_n(\delta)}
    = L_\uxi(\uy_j)^{1+\cO_n(\delta)}.
\]
Now, suppose that $\uz$ is not a multiple of $\uy_j$.  Then
$\uz$ and $\uy_j$ are linearly independent over $\bQ$ and so
there exists an integer $k$ with $1\le k\le n$ such that
$\uy_{j-n},\dots,\widehat{\uy_{j-k}},\dots,\uy_j,\uz$
form a basis of $\bQ^{n+1}$.  Since $\uy_j$ has the largest norm
and yields the smallest value for the function $L_\uxi$ among the
points $\uy_{j-n},\dots,\uy_j$, we find
\begin{align*}
 1\le |\det(\uy_{j-n},&\dots,\widehat{\uy_{j-k}},\dots,\uy_j,\uz)|\\
 &\ll
      \big(\norm{\uz}L_\uxi(\uy_{j})+L_\uxi(\uz)\norm{\uy_{j}}\big)
        L_\uxi(\uy_{j-n}) \cdots \widehat{L_\uxi(\uy_{i-k})} \cdots L_\uxi(\uy_{j-1})\\
  &\ll
      \norm{\uy_j}^{\cO_n(\delta)-(\rho^{-n}+\cdots+\rho^{-2})}.
\end{align*}
Assuming $\delta$ small enough as a function of $n$ alone,
this yields an upper bound on $\norm{\uy_j}$ and thus on $i$.
Then Condition (iv) is fulfilled as well.
\end{proof}

%
%

\section{Construction of extremal points}
\label{sec:construction}

We now turn to the proof of Theorem \ref{results:thm2} (ii), so $n\ge 2$.
As mentioned after Lemma \ref{equiv:lemma}, we may assume
that the hypersurface $Z$ of $\bP^{n+1}(\bR)$ is defined by
the quadratic form $q\colon\bQ^{n+1}\to\bQ$ given by
\begin{equation}
 \label{construction:eq:choix1}
 q(t_0,\dots,t_n)=t_0^2-a_1t_1^2-\cdots-a_{n}t_{n}^2
\end{equation}
for integers $a_1,\dots,a_{n}$ where $a_1>0$ is not
a square, or by
\begin{equation}
 \label{construction:eq:choix2}
 q(t_0,\dots,t_n)=t_0t_1-a_2t_2^2-\cdots-a_{n}t_{n}^2
\end{equation}
for integers $a_2,\dots,a_{n}$ with $a_2\neq0$. We need to
show the existence of infinitely many points $\xi$ in $\Zli$
such that
\begin{equation}
 \label{construction:eq:limsup}
 \limsup_{X\to\infty} X^{1/\rho}\cD_\xi(X;E)<\infty
\end{equation}
where $E=\bP^n(\bQ)\setminus Z$ and where $\rho=\rho_n$ is
as in the statement of Theorem \ref{intro:thm}.  However,
showing the existence of a single point $\xi$ suffices because if $\xi$
has this property, then it follows from Lemma \ref{equiv:lemma}
that $T(\xi)$ shares the same property for any automorphism
$T\in\GL_{n+1}(\bQ)$ such that $q\circ T=q$.  Indeed, this
group of automorphisms of $q$ is infinite and we have $T(\xi)=\xi$
if and only if $T=\pm I$.  For example, if $q$ is given by
\eqref{construction:eq:choix1}, we obtain an
automorphim $T$ of infinite order by choosing a solution
$(u,v)\in\bZ^2$ of the Pell equation $u^2-a_1v^2=1$
with $v\neq 0$ and by defining
\[
 T(t_0,t_1,\dots,t_n)=(ut_0+a_1vt_1,vt_0+ut_1,t_2,\dots,t_n)
\]
(this corresponds to multiplication by $u+v\sqrt{a_1}$ in
$\bQ(\sqrt{a_1})$ via the natural isomorphism between
$\bQ^{n+1}$ and $\bQ(\sqrt{a_1})\times \bQ^{n-1}$).
If $q$ is given by \eqref{construction:eq:choix2}, the
automorphim $T$ given by
\[
 T(t_0,t_1,\dots,t_n)=(2t_0,t_1/2,t_2,\dots,t_n)
\]
is also of infinite order.

Thus we simply need to construct one point $\xi\in\Zli$
with the property \eqref{construction:eq:limsup}.  We achieve
this through the following result.

\begin{theorem}
\label{construction:thm}
There exists an unbounded sequence of points $(\ux_i)_{i\ge 0}$
in $\bZ^{n+1}$ which upon setting $X_i=\|\ux_i\|$ satisfies
for each $i\ge 0$
\begin{itemize}
\item[(i)] $\ux_{i+n+1}=\psi(\ux_{i+n},\ux_i)$,
\item[(ii)] $q(\ux_i)=1$,
\item[(iii)] $|\det(\ux_{i},\dots,\ux_{i+n})|
    =|\det(\ux_{0},\dots,\ux_{n})|\neq 0$,
\item[(iv)] $X_{i+1}\asymp X_i^{\rho}$,
\item[(v)] $\|\ux_{i+1}\wedge\ux_i\| \asymp X_{i+1}/X_i$,
\end{itemize}
with implied constants that are independent of $i$.  Its image
$\big([\ux_i]\big)_{i\ge 0}$ in $\bP^n(\bR)$ converges to a point
$\xi$ in $\Zli$ such that $D_\xi(\ux_i)\asymp X_i^{-1}$.
Moreover, this point $\xi$ satisfies \eqref{construction:eq:limsup}.
\end{theorem}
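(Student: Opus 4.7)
The plan is to reduce to one of the two canonical forms provided by Proposition \ref{equiv:prop:Zli}, choose initial data carefully, and then let the recursion (i) do the rest. Properties (i) and (ii) are linked: by Lemma \ref{forms:lemma:psi}, if $q(\ux_{i+n})=q(\ux_i)=1$, then $q(\ux_{i+n+1})=q(\psi(\ux_{i+n},\ux_i))=1$. So (ii) propagates by induction, provided the $n+1$ initial points are chosen with $q(\ux_i)=1$. Property (iii) is nearly automatic from (i): since $\ux_{i+n+1}=b(\ux_{i+n},\ux_i)\ux_{i+n}-\ux_i$, the last column of the new determinant contributes only its $-\ux_i$ component to
\[
 \det(\ux_{i+1},\dots,\ux_{i+n+1})
  =\det(\ux_{i+1},\dots,\ux_{i+n},-\ux_i)
  =\pm\det(\ux_i,\dots,\ux_{i+n}),
\]
so the absolute value of this determinant is an invariant, nonzero if it is nonzero initially.

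For the initial construction, I would work case by case depending on which canonical form from \eqref{construction:eq:choix1} or \eqref{construction:eq:choix2} is available. In each case one has an infinite supply of integer solutions of $q=1$ (e.g., produced from Pell's equation for the first form, or from the isotropic direction for the second), and one can pick $\ux_0,\dots,\ux_n$ linearly independent over $\bQ$ with $X_{j+1}\asymp X_j^{\rho}$ for $0\le j<n$ and with the cross-term $b(\ux_n,\ux_0)$ bounded away from $0$ in the right regime. The construction should be arranged so that the inductive hypothesis below holds at the initial stage.

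The induction carrying (iv) and (v) is the technical heart. Assume that up to index $i+n$ we have (ii)--(v). From $q(\ux_j)=1$ and Lemma \ref{est:lemma}(ii), any representative $\uxi$ of the (provisional) limit $\xi$ satisfies $L_\uxi(\ux_j)\gg 1/X_j$; combined with the upper bound $L_\uxi(\ux_j)\ll 1/X_j$ coming inductively from (v), we have $L_\uxi(\ux_j)\asymp 1/X_j$. Then Lemma \ref{est:lemma}(i) gives $|b(\ux_{i+n},\ux_i)|\ll X_{i+n}/X_i$, and Lemma \ref{est:lemma}(iv) yields the upper bound $\|\ux_{i+n+1}\|\ll X_{i+n}^2/X_i$. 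The matching lower bound $|b(\ux_{i+n},\ux_i)|\gg X_{i+n}/X_i$ is the main obstacle: it cannot come from Lemma \ref{est:lemma} alone. I would extract it from (iii): the determinant $|\det(\ux_{i+1},\dots,\ux_{i+n+1})|$ equals the initial invariant, and expanding it using \eqref{est:eq:wedge} together with the known sizes and $L_\uxi$-values of $\ux_{i+1},\dots,\ux_{i+n}$ forces $\|\ux_{i+n+1}\|$ (and thus $|b(\ux_{i+n},\ux_i)|$) to be of the conjectured order. Since $\rho$ satisfies $\rho^{n+1}=2\rho^n-1$, the size $X_{i+n+1}\asymp X_{i+n}^2/X_i\asymp X_i^{2\rho^n-1}=X_i^{\rho^{n+1}}$ is exactly consistent with (iv). Property (v) at the next step follows similarly from \eqref{est:eq:wedge} and the sizes just obtained.

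Once (i)--(v) are established, convergence and the extremal property are formal. From (v), $\dist([\ux_{i+1}],[\ux_i])=\|\ux_{i+1}\wedge\ux_i\|/(X_iX_{i+1})\asymp X_i^{-2}$, so $([\ux_i])$ is Cauchy and converges to some $\xi\in\bP^n(\bR)$ with $\dist(\xi,[\ux_i])\asymp X_i^{-2}$, giving $D_\xi(\ux_i)\asymp X_i^{-1}$. Continuity of $q_\bR$ and $q(\ux_i)/X_i^2\to 0$ force $\xi\in Z$; property (iii) together with Lemma \ref{est:lemma:li} shows $\xi\in\Zli$. Finally, the key observation for \eqref{construction:eq:limsup} is that $q(\ux_i)=1\neq 0$, hence $[\ux_i]\in E$. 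Thus for any $X$ with $X_i\le X<X_{i+1}$,
\[
 \cD_\xi(X;E)\le D_\xi(\ux_i)\ll X_i^{-1}
 \quad\text{and hence}\quad
 X^{1/\rho}\cD_\xi(X;E)\ll X_{i+1}^{1/\rho}/X_i\asymp 1
\]
by (iv), completing the proof.
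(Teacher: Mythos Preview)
Your reductions for (ii), (iii), and the final paragraph (convergence, $\xi\in\Zli$, and the estimate for $\cD_\xi(X;E)$) are correct and match the paper. The gap is in the inductive step for (iv) and (v).

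You invoke Lemma \ref{est:lemma} with the ``(provisional) limit $\xi$'' to control $|b(\ux_{i+n},\ux_i)|$ and $L_\uxi(\ux_j)$. This is circular in two ways. First, $\xi$ only exists after the full sequence is built \emph{and} shown to satisfy (iv)--(v); you cannot use it inside the induction that establishes (iv)--(v). Second, your claim $L_\uxi(\ux_j)\ll 1/X_j$ ``from (v)'' actually requires (v) for \emph{all} indices $k\ge j$ (to sum the telescoping distances), not just up to the current stage. If instead you try a moving reference point (say $\ux_{i+n}/X_{i+n}$, which is only approximately a zero of $q_\bR$), the implied constants in Lemma \ref{est:lemma} and in \eqref{est:eq:wedge} depend on that reference and are no longer uniform in $i$; the induction then accumulates multiplicative errors that you have not controlled. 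The determinant lower-bound idea you sketch for $|b(\ux_{i+n},\ux_i)|$ suffers from the same defect: the estimate \eqref{est:eq:wedge} needs a fixed $\uxi$.

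The paper sidesteps this entirely: it never uses $\xi$ to prove (iv)--(v). Instead it tracks the family $b_i^{(j)}=b(\ux_{i-j},\ux_i)$, $\|\ux_{i-j}\wedge\ux_i\|$, and $X_i$ simultaneously. These satisfy explicit recursions derived from \eqref{etape2:eq:rec}, and after taking logarithms they are governed (up to summable errors) by a linear map $T$ on $\bR^{2n+1}$ whose minimal polynomial has $\rho$ as its unique root of modulus $>1$. An approximation lemma (Lemma \ref{lemme:approx}) then gives $\log X_i=\alpha\rho^i+O(1)$ and analogous asymptotics for the other quantities, yielding (iv) and (v) with genuinely uniform constants. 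The careful choice of initial data (Proposition \ref{construction:prop:choixU}: $\ux_0,\dots,\ux_{n-1}$ bounded, $\ux_n$ of size $\asymp B$ large) is what makes the error series summable; your proposed initialisation $X_{j+1}\asymp X_j^\rho$ for $0\le j<n$ is not what is used and does not by itself prevent constant drift through the recursion.
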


Arguing as in the proof of Proposition \ref{outside:prop2},
one may show that, except for the very precise form of
Conditions (i) to (iii), the existence of such a sequence
$(\ux_i)_{i\ge 0}$ is forced upon if we assume that
a point $\xi\in\Zli$ satisfies \eqref{construction:eq:limsup}.
The theorem shows the converse.

The proof of Theorem \ref{construction:thm} requires two main
steps.  We first construct linearly independent points
$\ux_0,\dots,\ux_n$ from the set
\[
 U=\{\ux\in\bZ^{n+1}\,;\,q(\ux)=1\}.
\]
Then we extend them into an infinite sequence $(\ux_i)_{i\ge 0}$
using the recurrence relation (i).  The resulting sequence is
entirely contained in $U$ because Lemma \ref{forms:lemma:psi}
shows that $\psi(\ux,\uy)\in U$ for any $\ux,\uy\in U$.  So,
the recurrence relation (i) simplifies to
\begin{equation}
 \label{eq:rec}
 \ux_{i+n+1}=b(\ux_i,\ux_{i+n})\ux_{i+n}-\ux_i \quad (i\ge 0)
\end{equation}
where $b$ denotes the symmetric bilinear form attached to $q$
(characterized by \eqref{forms:eq:qb}).  In turn this implies
that
\[
 |\det(\ux_{i+1},\dots,\ux_{i+n+1})|
   = |\det(\ux_{i},\dots,\ux_{i+n})|
\]
for each $i\ge 0$.  Thus Conditions (ii) and (iii) are
automatically satisfied.  We show that for a suitable choice of
$\ux_0,\dots,\ux_n$, the asymptotic estimates (iv) and (v) hold
as well, thus proving the first assertion of the theorem.
As we will see, the second assertion follows easily from this.

Before, we go on with the proof, we note that the polynomial
\[
 h(x)=(x-1)(x^n-x^{n-1}-\cdots-x-1)=x^{n+1}-2x^n+1
\]
has only two positive real roots, $1$ and $\rho=\rho_n\in(1,2)$.
Moreover, for any sufficiently small $\epsilon>0$, we have
$2(1+\epsilon)^n>(1+\epsilon)^{n+1}+1$ and so $|2x^n|>|x^{n+1}+1|$
for each $x\in\bC$ with $|x|=1+\epsilon$.  By Rouch\'e's theorem,
this means that $h(x)$ has the same number $n$ of complex roots as
$2x^n$ in the closed disk $|x|\le 1+\epsilon$. Since $\epsilon$
can be taken arbitrarily small, and since $x=1$ is the only root
of $h$ with $|x|=1$, we conclude that besides $1$ and $\rho$,
all roots $x$ of $h$ have $|x|<1$.  In particular, the algebraic integer
$\rho$ is a Pisot number.

%
%

\subsection*{Step 1: Choice of initial points.}

\begin{proposition}
\label{construction:prop:choixU}
There exist a constant $C_0>1$ and points
$\ux_0,\dots,\ux_{n-1}\in U$ with the following property.
For each $B\ge C_0$, there exists $\ux_n\in U$ such that
\begin{equation}
 \label{construction:prop:choixU:eq}
 \|\ux_n\|,\ |\det(\ux_0,\ux_1,\dots,\ux_n)|,\
       |b(\ux_i,\ux_n)|,\  \|\ux_i\wedge\ux_n\|
   \in \Big[\frac{B}{C_0},\, C_0B\Big]
\end{equation}
for $i=1,\dots,n-1$.
\end{proposition}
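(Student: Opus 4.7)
Via Proposition \ref{equiv:prop:Zli}, I would first reduce to the two canonical forms \eqref{construction:eq:choix1} and \eqref{construction:eq:choix2} and construct, in each case, a one-parameter family $(\ux_n^{(c)})_c$ of elements of $U$ whose norms are dense on a multiplicative scale, together with suitable fixed points $\ux_0,\ldots,\ux_{n-1}\in U$. The conditions \eqref{construction:prop:choixU:eq} would then follow from leading-order asymptotics for large $c$, upon choosing $C_0$ sufficiently large.

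For the form \eqref{construction:eq:choix2} (with $a_2\neq0$), I would use the polynomial family $\ux_n^{(c)}:=(1+a_2c^2,\,1,\,c,\,0,\ldots,0)\in U$ for $c\in\bZ_{\ge1}$, which has norm $\asymp c^2$ and consecutive ratio tending to $1$; thus for any $C_0>1$ and $B$ large enough there exists $c$ with $\|\ux_n^{(c)}\|\in[B/C_0,C_0B]$. For the fixed points, I would set $\ux_0:=(1,1,0,\ldots,0)\in U$ and, for $i=1,\ldots,n-1$, $\ux_i:=(1+a_{i+1},\,1,\,0,\ldots,0,\,1,\,0,\ldots,0)$ with the trailing $1$ in position $i+1$; the identity $q(\ux_i)=(1+a_{i+1})-a_{i+1}=1$ shows $\ux_i\in U$. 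A direct multilinear computation (e.g.\ reducing the $(n+1)\times(n+1)$ determinant to the $3\times3$ minor on coordinates $0,1,2$) shows that $\det(\ux_0,\ldots,\ux_{n-1},\ux_n^{(c)})$, each $b(\ux_i,\ux_n^{(c)})$, and each $\|\ux_i\wedge\ux_n^{(c)}\|$ is a polynomial in $c$ of exact degree $2$ with leading coefficient a non-zero rational multiple of $a_2$, and is therefore of order $c^2\asymp\|\ux_n^{(c)}\|$.

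For the form \eqref{construction:eq:choix1} (with $a_1>0$ non-square), I would use the orbit $\ux_n^{(k)}:=T^k\ue_0=(u_k,v_k,0,\ldots,0)\in U$ under the Pell transformation $T$ described in the introduction, where $(u_k,v_k)$ runs over integer solutions of $u^2-a_1v^2=1$. Since $\|\ux_n^{(k)}\|\asymp\eta^k$ with $\eta:=u_1+v_1\sqrt{a_1}>1$, any $C_0\ge\eta$ produces the required density. Taking $\ux_0:=\ue_0$, the task reduces to finding $\ux_1,\ldots,\ux_{n-1}\in U$ such that $\{\ux_0,\ldots,\ux_{n-1}\}$ is linearly independent, each $\ux_i$ has at least one non-zero coordinate among its first two, and their projections onto $\mathrm{span}(\ue_2,\ldots,\ue_n)$ are linearly independent. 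The irrationality of $\sqrt{a_1}$ combined with the asymptotics $u_k\sim\eta^k/2$ and $v_k\sim\eta^k/(2\sqrt{a_1})$ will then force every non-trivial $\bQ$-linear expression in $(u_k,v_k)$ to be of exact order $\eta^k$, so all four quantities in \eqref{construction:prop:choixU:eq} come out $\asymp\eta^k\asymp\|\ux_n^{(k)}\|$.

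The main obstacle is producing such auxiliary $\ux_i\in U$ for the form \eqref{construction:eq:choix1} in its non-degenerate sub-cases. When $a_{i+1}=0$ one may simply take $\ux_i:=\ue_0+\ue_{i+1}$, but when all $a_j\neq0$ one must solve equations of type $t_0^2-a_1t_1^2-a_{i+1}t_{i+1}^2=1$ (or multi-coordinate variants) whose integer solvability depends on the arithmetic of $a_{i+1}$ and on the Witt index of $q$. This would require a case analysis, handled by Pell-type constructions when the restricted ternary form represents $1$ over $\bZ$, and otherwise by involving more coordinates and exploiting the integer orthogonal group of $q$ acting on $U$. Once such points are in hand, the verifications reduce to the routine asymptotic analysis outlined above.
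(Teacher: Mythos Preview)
Your overall architecture coincides with the paper's: the parametric family for the form \eqref{construction:eq:choix2} and the choice of fixed points $\ux_0,\dots,\ux_{n-1}$ there are, up to a cyclic reindexing, exactly what the paper does, and for \eqref{construction:eq:choix1} the paper likewise takes $\ux_n$ to range over Pell solutions $(u,v,0,\dots,0)$ and seeks fixed auxiliary points supported on $\ue_0,\ue_1,\ue_{j}$. Your asymptotic analysis via the irrationality of $\sqrt{a_1}$ is also the paper's.

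The only gap is the one you flag yourself, and your proposed resolution is more complicated than necessary. The ternary equation
\[
 t_0^2 - a_1 t_1^2 - a_j t_j^2 = 1
\]
\emph{always} has an integer solution with $t_0 t_j\neq 0$, uniformly in $a_j\in\bZ$; no appeal to the Witt index of $q$, no multi-coordinate variants, and no action of the orthogonal group are needed. The paper isolates this as a separate lemma (Lemma~\ref{construction:lemma:trinomial}) and proves it by the substitution $t_1=m t_j$: the equation becomes the Pell equation $t_0^2-(a_1 m^2+a_j)t_j^2=1$, and it suffices to pick $m$ so that $a_1 m^2+a_j$ is positive and not a perfect square. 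Such $m$ exist in abundance since $a_1 t^2+a_j\in\bZ[t]$ is not the square of an integer polynomial. With the resulting points $\ux_i=k_i\ue_0+\ell_i\ue_1+m_i\ue_{j}$ in hand (one for each extra coordinate direction), your leading-order verifications go through exactly as you outline; the dichotomy you anticipate between ``represents $1$'' and ``otherwise'' never arises.
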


The most delicate case is when the quadratic form is
given by \eqref{construction:eq:choix1}.  The proof then relies
on a theorem of Lagrange saying that, for any
positive integer $a$, the Pell equation
$x^2-ay^2=1$ admits infinitely many
solutions $(x,y)\in\bZ^2$ if and only if $a$
is not a square.  We will also need the following
consequence of that result.

\begin{lemma}
\label{construction:lemma:trinomial}
Let $a,b\in\bZ$ with $a>0$.  Then the equation $x^2-ay^2-bz^2=1$
admits at least one solution $(x,y,z)\in\bZ^3$ with $xz\neq 0$.
\end{lemma}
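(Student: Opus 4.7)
The plan is to reduce the three-variable equation to a one-variable Pell equation via a linear substitution, and then invoke Lagrange's theorem on the infinitude of Pell solutions. If $b=0$, then $(x,y,z)=(1,0,1)$ works, so assume henceforth that $b\neq 0$. For any positive integer $k$, substituting $y=kz$ in $x^2-ay^2-bz^2=1$ yields
\[
 x^2-(ak^2+b)z^2=1.
\]
By Lagrange's theorem, this has a solution $(x,z)\in\bZ^2$ with $xz\neq 0$ as soon as $N:=ak^2+b$ is a positive integer that is not a perfect square, and then $(x,kz,z)$ is a solution of the original equation with $xz\neq 0$, as required.

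Everything therefore reduces to the auxiliary claim that, given $a>0$ and $b\neq 0$ in $\bZ$, there is some $k\in\bZ_{>0}$ for which $ak^2+b>0$ is not a square. I would establish this by contradiction. Since $a>0$, we have $ak^2+b>0$ for all sufficiently large $k$; assume that $ak^2+b=m_k^2$ with $m_k\in\bZ_{\ge 0}$ for each such $k$. A Taylor expansion of the square root gives
\[
 m_k=k\sqrt a+\frac{b}{2k\sqrt a}+O(k^{-3}),
\]
so $m_{k+1}-m_k\to\sqrt a$; as the left-hand side is an integer, this forces $\sqrt a\in\bZ$, say $a=c^2$. Then $(m_k-ck)(m_k+ck)=b$, and since $m_k+ck\to\infty$ while $b$ is a fixed nonzero integer, the factor $m_k-ck$ must vanish for all large $k$, yielding $b=0$, a contradiction.

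The argument is essentially free of obstacles; the only mildly delicate step is the asymptotic argument in the sublemma, which rests on elementary real analysis and the standard Pell theorem. Notably, no hypothesis that $a$ be a non-square is needed: the additive perturbation $b$ with $b\neq 0$ is by itself enough to guarantee some $k$ with $ak^2+b$ a positive non-square, which is precisely what is required to pass to a genuine Pell equation.
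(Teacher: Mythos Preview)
Your proof is correct and follows essentially the same route as the paper: handle $b=0$ trivially, then substitute $y=kz$ to reduce to a Pell equation $x^2-(ak^2+b)z^2=1$ and invoke Lagrange once $ak^2+b$ is a positive non-square. The only difference is that the paper obtains the existence of such a $k$ by citing a result of P\'olya and Szeg\H{o} (and remarks that a direct argument is also possible), whereas you supply a short self-contained asymptotic argument for this sublemma; both are valid and yield the same conclusion.
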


\begin{proof}
If $b=0$, we have the solution $(x,y,z)=(1,0,1)$.  Suppose that
$b\neq 0$.  Then the polynomial $at^2+b\in\bZ[t]$ is not the square
of a polynomial of $\bZ[t]$ and so by a statement of P\'olya and
Szeg\"o \cite[Problem 114, p.~132]{PZ1976}, there are arbitrarily large
integers $m$ for which $p(m)=am^2+b$ is not a square (see \cite{Mu2008}
for generalizations).  In the present case,
one can even show that, for any sufficiently large integer $k$, at
least one of the integers $ak^2+b$ or $a(k+1)^2+b$ is not a square.
Fix $m\in\bZ$ such that $c=am^2+b$ is positive but not a square,
and put $y=mz$.  Then the equation becomes $x^2-cz^2=1$
which, by the theorem of Lagrange,
admits a solution $(x,z)\in\bZ^2$ with $xz\neq 0$.
Then $(x,mz,z)$ is a solution of the original equation with
the requested property.
\end{proof}

\begin{proof}[Proof of Proposition \ref{construction:prop:choixU}]
For $i=0,1,\dots,n$, let $\ue_i$ denote the point of $\bZ^{n+1}$
whose $i$-th coordinate is $1$ and all other coordinates are $0$.

Suppose first that $q$ is given by \eqref{construction:eq:choix1}
with $a_1,\dots,a_n\in\bZ$ and $a_1>0$ non-square.
Then, for $i=0,\dots,n-2$, Lemma \ref{construction:lemma:trinomial}
ensures the existence of a point $\ux_i$ in $U$ of the form
\[
 \ux_i=k_i\ue_0+\ell_i\ue_1+m_i\ue_{i+2}
\]
with $(k_i,\ell_i,m_i)\in\bZ^3$ and $k_im_i\neq 0$.
By Lagrange's theorem, there exists also an infinite set
of points of $U$ of the form
\[
 \ux_{n-1}=k_{n-1}\ue_0+\ell_{n-1}\ue_1
\]
with integers $k_{n-1},\ell_{n-1}\ge 1$.  Fix such a choice
of $\ux_0,\dots,\ux_{n-1}\in U$ and put
\[
 \ux_{n}=k\ue_0+\ell\ue_1
\]
for another pair $(k,\ell)$ of positive integers satisfying
$k^2-a_1\ell^2=1$ with $\ell\neq\ell_{n-1}$.
Then $\ux_0,\dots,\ux_n \in U$ are linearly
independent over $\bQ$.  Since $k=\sqrt{a_1}\ell+\cO(1/\ell)$,
we find
\begin{align*}
 &\|\ux_n\| \asymp |\det(\ux_0,\ux_1,\dots,\ux_n)| \asymp \ell,\\
 &\ell
  \gg |b(\ux_i,\ux_n)|
  = 2|kk_i-a_1\ell\ell_i|
  = 2\sqrt{a_1}\ell|k_i-\sqrt{a_1}\ell_i|+\cO(1/\ell) \gg \ell,\\
 &\ell
  \gg \|\ux_i\wedge\ux_n\|
  \ge |k\ell_i-\ell k_i|
  = \ell|k_i-\sqrt{a_1}\ell_i|+\cO(1/\ell) \gg \ell
\end{align*}
for $i=0,\dots,n-1$.  The conclusion follows
because the admissible values of $\ell$ have exponential
growth (they form a linear recurrence sequence).

Suppose now that $q$ is given by \eqref{construction:eq:choix2}
with $a_2,\dots,a_{n-1}\in\bZ$ and $a_2\neq 0$.  For a given
integer $\ell\ge 2$, we choose
\[
 \ux_i=
 \begin{cases}
  (a_{i+2}+1)\ue_0+\ue_1+\ue_{i+2} &\text{for $i=0,\dots,n-2$,}\\
  \ue_0+\ue_1 &\text{if $i=n-1$,}\\
  (a_2\ell^2+1)\ue_0+\ue_1+\ell\ue_2 &\text{if $i=n$.}
 \end{cases}
\]
Then $\ux_0,\dots,\ux_n\in U$ are linearly
independent over $\bQ$.  Moreover, as functions of $\ell$, the numbers
$\|\ux_n\|$, $|\det(\ux_0,\ux_1,\dots,\ux_n)|$,
$|b(\ux_i,\ux_n)|$ and $\|\ux_i\wedge\ux_n\|$
for $i=0,\dots,n-1$ are all equal to $|a_2|\ell^2+\cO(\ell)$
or $\sqrt{2}|a_2|\ell^2+\cO(\ell)$.
The conclusion follows by varying $\ell$.
\end{proof}

%
%

\subsection*{Step 2: Asymptotic estimates}
We will use the approximation lemma of Appendix \ref{sec:appA}
to prove the following statement.

\begin{proposition}
\label{construction:prop:asymp}
Let $C_0$ and $\ux_0,\dots,\ux_{n-1}\in U$ be as
in Proposition \ref{construction:prop:choixU}
for the given quadratic form $q$.  For each sufficiently large $B$
with $B\ge C_0$, the point $\ux_n\in U$ provided
by Proposition \ref{construction:prop:choixU}
has the following property.  The sequence $(\ux_i)_{i\ge 0}$ built on
$(\ux_0,\dots,\ux_n)$ using the recurrence formula
\begin{equation}
\label{etape2:eq:rec}
 \ux_{i+1}
   = \psi(\ux_{i},\ux_{i-n})
   = b(\ux_{i-n},\ux_{i})\ux_{i}-\ux_{i-n} \quad (i\ge n).
\end{equation}
is contained in $U$ and satisfies
\[
 \norm{\ux_{i+1}}\asymp\norm{\ux_i}^\rho
 \et
 \norm{\ux_i\wedge\ux_{i+1}}\asymp\frac{\norm{\ux_{i+1}}}{\norm{\ux_i}}.
\]
\end{proposition}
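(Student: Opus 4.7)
The plan is to derive an approximate linear recurrence at the exponent level for $\log\|\ux_i\|$ and invoke the approximation lemma of Appendix A to extract the claimed multiplicative equivalences.

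First, I would verify stability in $U$: by Lemma \ref{forms:lemma:psi}, $q(\ux_{i+1}) = q(\ux_i)^2 q(\ux_{i-n}) = 1$, so the recurrence reduces to $\ux_{i+1} = c_i\,\ux_i - \ux_{i-n}$ with $c_i := b(\ux_{i-n},\ux_i)$. Wedging with $\ux_i$ yields the exact identity
\[
 \ux_{i+1}\wedge\ux_i \;=\; \ux_i\wedge\ux_{i-n} \qquad (i\ge n),
\]
and hence $\|\ux_{i+1}\wedge\ux_i\| = \|\ux_i\wedge\ux_{i-n}\|$. Moreover, $\ux_{i+n+1}+\ux_i = c_{i+n}\ux_{i+n}$ is a multiple of $\ux_{i+n}$, so $|\det(\ux_{i+1},\dots,\ux_{i+n+1})| = |\det(\ux_i,\dots,\ux_{i+n})|$, giving condition (iii) of Theorem \ref{construction:thm}.

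Next, I would prove by induction on $i\ge n$ that, for $B$ sufficiently large,
\[
 \|\ux_i\| \asymp B^{\alpha_i}, \qquad |c_i| \asymp B^{\alpha_i-\alpha_{i-n}}, \qquad \|\ux_i\wedge\ux_{i-n}\| \asymp B^{\alpha_i-\alpha_{i-n}},
\]
with implied constants independent of $i$, where $(\alpha_i)_{i\ge 0}$ is the integer sequence determined by $\alpha_0=\dots=\alpha_{n-1}=0$, $\alpha_n=1$, and $\alpha_{i+1} = 2\alpha_i - \alpha_{i-n}$ for $i\ge n$. The base case $i=n$ is Proposition \ref{construction:prop:choixU}. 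In the inductive step, $|c_i|\|\ux_i\|$ has size $B^{2\alpha_i-\alpha_{i-n}}$ and dominates $\|\ux_{i-n}\|\asymp B^{\alpha_{i-n}}$ (since $2(\alpha_i-\alpha_{i-n})>0$ for $i>n$), giving $\|\ux_{i+1}\|\asymp|c_i|\|\ux_i\|\asymp B^{\alpha_{i+1}}$. To propagate the estimate on $|c_{i+1}|$, one expands
\[
 c_{i+1} = c_i\,b(\ux_{i-n+1},\ux_i) - b(\ux_{i-n+1},\ux_{i-n})
\]
and isolates the leading product, of size $B^{\alpha_{i+1}-\alpha_{i-n+1}}$; this requires a parallel induction establishing $|b(\ux_j,\ux_k)|\asymp\|\ux_k\|/\|\ux_j\|$ for $j<k$. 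The estimate on $\|\ux_i\wedge\ux_{i-n}\|$ is tied to $|c_i|$ through the Gram-type identity $c_i^2 = 4 - 4\,q^{(2)}(\ux_i\wedge\ux_{i-n})$, where $q^{(2)}$ is the induced quadratic form on $\bigwedge^2\bQ^{n+1}$.

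The characteristic polynomial of the linear recursion factors as $x^{n+1}-2x^n+1 = (x-1)(x^n-x^{n-1}-\cdots-1)$, whose roots are $1$, $\rho=\rho_n$, and $n-1$ complex roots strictly inside the unit disk (as observed just after the statement of Theorem \ref{construction:thm}). Hence $\alpha_{i+1}-\rho\,\alpha_i$ is bounded in $i$, so $\|\ux_{i+1}\|\asymp\|\ux_i\|^\rho$ with implied constants depending only on $B$. The wedge identity then gives $\|\ux_i\wedge\ux_{i+1}\| = \|\ux_i\wedge\ux_{i-n}\| \asymp B^{\alpha_i-\alpha_{i-n}} \asymp \|\ux_{i+1}\|/\|\ux_i\|$, which is the second claim.

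The main difficulty is the inductive step: the recurrence for $c_{i+1}$ pulls in cross bilinear forms $b(\ux_j,\ux_k)$ with $k-j\neq n$ that are not themselves values of $c_\ell$, requiring a parallel induction on all such cross terms together with verification that the leading products dominate. The role of the approximation lemma of Appendix A is to package this bookkeeping into a single clean statement: given the approximate linear recurrence at the exponent level with errors uniformly bounded once $B$ is large, it produces the desired multiplicative equivalences with implicit constants independent of $i$.
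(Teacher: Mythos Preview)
Your approach is essentially the paper's: track auxiliary bilinear forms and wedge products, show they satisfy an approximate multiplicative recurrence, pass to logarithms, and invoke the approximation lemma of Appendix~A. Two points in your sketch need correction, however.

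First, the Gram-type identity linking $c_i^2$ to $q^{(2)}(\ux_i\wedge\ux_{i-n})$ only yields the lower bound $\|\ux_i\wedge\ux_{i-n}\|\gg|c_i|$ (via $|q^{(2)}(\uw)|\ll\|\uw\|^2$); since $q^{(2)}$ is indefinite it gives no reverse inequality, and the trivial bound $\|\ux_i\wedge\ux_{i-n}\|\le\|\ux_i\|\,\|\ux_{i-n}\|$ is far too crude. The paper instead runs the parallel induction on the full closed system $b_i^{(j)}=b(\ux_{i-j},\ux_i)$ and $\uy_i^{(j)}=\ux_{i-j}\wedge\ux_i$ for $j=1,\dots,n$, using the recurrences
\[
 b_{i+1}^{(j)}=b_i^{(n)}b_i^{(j-1)}-b_{i+1-j}^{(n+1-j)},
 \qquad
 \uy_{i+1}^{(j)}=b_i^{(n)}\uy_i^{(j-1)}+\uy_{i+1-j}^{(n+1-j)},
\]
together with $\|\ux_i\|$; it is the logarithms of these $2n+1$ quantities that form the vector $\uv_i$ to which the lemma is applied.

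Second, and more importantly, the approximation lemma requires $\sum_i\|\uv_{i+1}-T(\uv_i)\|<\infty$, not merely bounded errors. Establishing this summability is precisely the hard part: one needs quantitative \emph{lower} bounds of the shape $|b_i^{(j)}|,\,\|\uy_i^{(j)}\|\ge(B/C_i)^{m(i,j)}$ (the paper's exponents $m(i,j)$ coincide with your $\alpha_i-\alpha_{i-j}$) in order to show that the relative errors in the multiplicative recurrence decay like $(C/B)^{m(i,j-1)}$. The paper achieves this by an induction with slowly growing but bounded constants $C_n\le C_{n+1}\le\cdots\le C$, the boundedness coming from $\sum_i 1/m(i,n)<\infty$. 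So the lemma does not ``package the bookkeeping'' for you; the induction must be carried out first to verify the lemma's hypothesis. Once that is in place, your conclusion via the exponent sequence $(\alpha_i)$ and the factorisation of $x^{n+1}-2x^n+1$ is equivalent to the paper's.
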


\begin{proof}
Fix a choice of $B\ge C_0$ and of a corresponding point
$\ux_n\in U$ as in Proposition \ref{construction:prop:choixU}, and consider
the sequence $(\ux_i)_{i\ge 0}$ given by \eqref{etape2:eq:rec}.
By Lemma \ref{forms:lemma:psi}, this sequence is contained
in $U$.  By linearity, the formula \eqref{etape2:eq:rec} yields
\begin{align*}
 b(\ux_{i+1-j},\ux_{i+1})
   &= b(\ux_{i-n},\ux_{i})b(\ux_{i+1-j},\ux_{i})
      - b(\ux_{i-n},\ux_{i+1-j}),\\
 \ux_{i+1-j}\wedge\ux_{i+1}
   &= b(\ux_{i-n},\ux_{i})\ux_{i+1-j}\wedge\ux_{i}
      + \ux_{i-n}\wedge\ux_{i+1-j}
\end{align*}
for each choice of integers $i$ and $j$ with $1\le j\le n\le i$.
Putting
\[
 b_i^{(j)}=b(\ux_{i-j},\ux_i)
 \et
 \uy_i^{(j)}=\ux_{i-j}\wedge\ux_i
\]
these equalities become
\begin{equation}
 \label{etape2:eq:by}
 b_{i+1}^{(j)}=b_i^{(n)}b_i^{(j-1)}-b_{i+1-j}^{(n+1-j)}
 \et
 \uy_{i+1}^{(j)}=b_i^{(n)}\uy_i^{(j-1)}+\uy_{i+1-j}^{(n+1-j)}
 \quad
 (1\le j\le n\le i).
\end{equation}
In particular, for $j=1$, they simplify to
\begin{equation}
 \label{etape2:eq:by1}
 b_{i+1}^{(1)}=b_i^{(n)}
 \et
 \uy_{i+1}^{(1)}=\uy_{i}^{(n)}
 \quad
 (i\ge n),
\end{equation}
since $b_i^{(0)}=b(\ux_i,\ux_i)=2q(\ux_i)=2$ and
$\uy_i^{(0)}=\ux_i\wedge\ux_i=0$.  Put also
\begin{align*}
 B_i&=\max\{|b_k^{(j)}|\,;\, 0\le k\le i
      \ \ \text{and}\ \ 0\le j \le \min\{n,k\}\},\\
 Y_i&=\max\{ \norm{\uy_k^{(j)}} \,;\, 0\le k\le i
      \ \ \text{and}\ \ 0\le j \le \min\{n,k\}\},\\
 X_i&=\max\{ \norm{\ux_k} \,;\, 0\le k\le i\}
\end{align*}
for each $i\ge 0$. For $i\ge n$ and $j\in\{2,\dots,n\}$,
the formulas \eqref{etape2:eq:by} imply that
\begin{align}
 |b_i^{(n)}|\,|b_i^{(j-1)}|-B_{i-1}&\le
   |b_{i+1}^{(j)}|\le |b_i^{(n)}|\,|b_i^{(j-1)}|+B_{i-1},
 \label{etape2:eq:bornesb}\\
 |b_i^{(n)}|\,\norm{\uy_i^{(j-1)}} - Y_{i-1}
  &\le \norm{\uy_{i+1}^{(j)}}
  \le |b_i^{(n)}|\,\norm{\uy_i^{(j-1)}}+Y_{i-1},
 \label{etape2:eq:bornesy}
\end{align}
while the recurrence formula \eqref{etape2:eq:rec} yields
\begin{equation}
 \label{etape2:eq:X}
 |b_i^{(n)}|\,\norm{\ux_i} - X_{i-1}
  \le \norm{\ux_{i+1}}
  \le |b_i^{(n)}|\,\norm{\ux_i} + X_{i-1}.
\end{equation}

We first show that, if $B$ is large enough,
the numbers $|b_i^{(j)}|$ and $\norm{\uy_i^{(j)}}$
behave like powers of $B$.  To this end, consider
the integers $m(i,j)$ defined recursively for
$i\ge 0$ and $j=1,\dots,n$ by
\[
 m(i,j) =
 \begin{cases}
  0 &\text{if $0\le i< n$ and $1\le j\le n$,}\\
  1 &\text{if $i=n$ and $1\le j\le n$,}\\
  m(i-1,n) &\text{if $i>n$ and $j=1$,}\\[2pt]
  m(i-1,n)+m(i-1,j-1) &\text{if $i>n$ and $2\le j\le n$.}
 \end{cases}
\]
It is relatively easy to prove that they satisfy
\begin{align}
 m(i,1) \le m(i,2) \le \cdots \le m(i,n) \quad (i\ge 0),
 \label{etape2:eq:m1}\\
 \frac{3}{2}m(i,n) \le m(i+1,n) \le 2m(i,n) \quad (i\ge n).
 \label{etape2:eq:m2}
\end{align}
In particular $m(i,n)$ tends to infinity with $i$.
Define also
\[
 C_n=\max\{C_0,B_{n-1},X_{n-1},Y_{n-1}\}
 \et
 C_{i+1}=2^{1/m(i,n)}C_i \quad (i\ge n).
\]
These numbers are independent of $B$ since $B_{n-1}$, $X_{n-1}$
and $Y_{n-1}$ are functions of $\ux_0,\dots,\ux_{n-1}$ only.
By \eqref{etape2:eq:m2}, we have $m(i,n)\ge (3/2)^{i-n}$ for
each $i\ge n$, thus $\sum_{i=n}^\infty 1/m(i,n)\le 3$ and so
the sequence $(C_i)_{i\ge n}$ is bounded above by $C:=8C_n$.

Assume from now on that $B\ge C^5$.  We claim that for each
$i\ge n$ we have
\begin{align}
 &X_i=\norm{\ux_i}>X_{i-1},
   \label{etape2:eq:Xmin}\\
 &\max\{B_{i-1},Y_{i-1}\}
   \le \frac{1}{2}\Big(\frac{B}{C_i}\Big)^{m(i,n)},
   \label{etape2:eq:BY}\\
 &\min\{|b_i^{(j)}|, \norm{\uy_i^{(j)}}\}
   \ge \Big(\frac{B}{C_i}\Big)^{m(i,j)} \
   \quad (1\le j\le n),
   \label{etape2:eq:BYmin}\\
 &\max\{|b_i^{(j)}|, \norm{\uy_i^{(j)}}\}
   \le (C_i B)^{m(i,j)} \
   \quad (1\le j\le n).
   \label{etape2:eq:BYmaj}
\end{align}

Arguing by induction on $i$, suppose first that $i=n$.
Then we have $m(i,j)=1$ for all $j$. So \eqref{etape2:eq:BYmin}
and \eqref{etape2:eq:BYmaj} follow immediately from the choice of $\ux_n$
(this only requires $B\ge C_0$).  Since $\norm{\ux_n}\ge B/C_0$
and $X_{n-1}\le C_n$, we also have $\norm{\ux_n}>X_{n-1}$, as
$B>C_0C_n$.  This yields \eqref{etape2:eq:Xmin}. Finally
\eqref{etape2:eq:BY} follows from $\max\{B_{n-1},Y_{n-1}\}\le C_n$,
as $B\ge 2C_n^2$.

Suppose now that \eqref{etape2:eq:Xmin} to \eqref{etape2:eq:BYmaj}
hold for some integer $i\ge n$.  Then \eqref{etape2:eq:BYmin}
gives $|b_i^{(n)}|\ge B/C_i\ge 2$.  Thus \eqref{etape2:eq:X}
together with \eqref{etape2:eq:Xmin} yield
\[
 \norm{\ux_{i+1}}\ge 2X_i-X_{i-1}>X_i,
\]
so $X_{i+1}=\norm{\ux_{i+1}}>X_i$.  Because
of \eqref{etape2:eq:m1} and \eqref{etape2:eq:BY}, the
inequalities \eqref{etape2:eq:BYmaj} imply
\begin{equation}
 \label{etape2:eq:maxBYi}
 \max\{B_{i},Y_{i}\} \le (C_iB)^{m(i,n)}.
\end{equation}
Since $m(i+1,n)\ge (3/2)m(i,n)$ and $B\ge C_{i+1}^5$, we deduce that
\[
 \Big(\frac{B}{C_{i+1}}\Big)^{m(i+1,n)}
  \ge \Big(\frac{B}{C_{i+1}^5}\Big)^{m(i,n)/2}(C_{i+1}B)^{m(i,n)}
  \ge 2 \max\{B_i,Y_i\},
\]
which proves \eqref{etape2:eq:BY} with $i$ replaced by $i+1$.
Using the hypothesis \eqref{etape2:eq:BYmin}, the
equalities \eqref{etape2:eq:by1} yield
\[
 \min\{|b_{i+1}^{(1)}|, \norm{\uy_{i+1}^{(1)}}\}
  = \min\{|b_{i}^{(n)}|, \norm{\uy_{i}^{(n)}}\}
  \ge \Big(\frac{B}{C_i}\Big)^{m(i,n)}
  \ge \Big(\frac{B}{C_{i+1}}\Big)^{m(i+1,1)},
\]
while using \eqref{etape2:eq:BY} and \eqref{etape2:eq:BYmin},
the estimates \eqref{etape2:eq:bornesb} and \eqref{etape2:eq:bornesy}
imply for $j=2,\dots,n$
\[
 \begin{aligned}
 \min\{|b_{i+1}^{(j)}|,\norm{\uy_{i+1}^{(j)}}\}
  &\ge |b_i^{(n)}|\min\{|b_{i}^{(j-1)}|,\norm{\uy_{i}^{(j-1)}}\}
        - \max\{B_{i-1},Y_{i-1}\}\\
  &\ge \Big(\frac{B}{C_i}\Big)^{m(i,n)+m(i,j-1)}
      - \frac{1}{2}\Big(\frac{B}{C_i}\Big)^{m(i,n)} \\
  &\ge \frac{1}{2}\Big(\frac{B}{C_i}\Big)^{m(i+1,j)} \\
  &\ge \Big(\frac{B}{C_{i+1}}\Big)^{m(i+1,j)}.
 \end{aligned}
\]
This proves \eqref{etape2:eq:BYmin} with $i$ replaced by $i+1$,
We omit the proof of the induction step for \eqref{etape2:eq:BYmaj}
as it is similar.  In that case, we may even replace the use of
\eqref{etape2:eq:BY} by the weaker estimate $\max\{B_{i-1},Y_{i-1}\}
\le (C_iB)^{m(i,n)}$ which follows from \eqref{etape2:eq:maxBYi}.

So, under the current hypothesis $B\ge C^5$, we have
\begin{align*}
 &X_i=\norm{\ux_i}>X_{i-1},
 \quad
 \max\{B_{i-1},Y_{i-1}\}\le \frac{1}{2}\Big(\frac{B}{C_i}\Big)^{m(i,n)},
 \quad
 \min\{|b_i^{(j)}|, \norm{\uy_i^{(j)}}\}
   \ge \Big(\frac{B}{C_i}\Big)^{m(i,j)}
\end{align*}
whenever $1\le j\le n\le i$.  In particular, this yields the crude estimate
\[
 \max\{B_{i-1},Y_{i-1}\} \le |b_i^{(n)}|.
\]
Thus, for each $i\ge n$ and $j=2,\dots,n$, the
inequalities \eqref{etape2:eq:bornesb}, \eqref{etape2:eq:bornesy}
and \eqref{etape2:eq:X} imply that
\begin{align*}
 |b_{i+1}^{(j)}|&=|b_i^{(n)}|\,|b_i^{(j-1)}|(1+\epsilon_{i,j})
 &&\text{with} \quad
   |\epsilon_{i,j}|
     \le \frac{B_{i-1}}{|b_i^{(n)}|\,|b_i^{(j-1)}|}
     \le \Big(\frac{C}{B}\Big)^{m(i,j-1)},\\
 \norm{\uy_{i+1}^{(j)}}&=|b_i^{(n)}|\,\norm{\uy_i^{(j-1)}}(1+\epsilon'_{i,j})
 &&\text{with} \quad
   |\epsilon'_{i,j}|
     \le \frac{Y_{i-1}}{|b_i^{(n)}|\,\norm{\uy_i^{(j-1)}}}
     \le \Big(\frac{C}{B}\Big)^{m(i,j-1)},\\
 X_{i+1}&=|b_i^{(n)}|\, X_i(1+\epsilon''_{i})
 &&\text{with} \quad
   |\epsilon''_{i}|
     \le \frac{X_{i-1}}{|b_i^{(n)}|\,X_i}
     \le \Big(\frac{C}{B}\Big)^{m(i,n)}.
\end{align*}
For $j=1$, we have instead the formulas \eqref{etape2:eq:by1}.
We conclude that, for each $i\ge n$, the vector
\[
 \uv_i:=\big(
        \log|b_i^{(1)}|,\dots,\log|b_i^{(n)}|,
        \log\norm{\uy_i^{(1)}},\dots,\log\norm{\uy_i^{(n)}},
        \log X_i
        \big)
        \in \bR^{2n+1},
\]
satisfies
\[
 \norm{\uv_{i+1}-T(\uv_i)} \ll \Big(\frac{C}{B}\Big)^{m(i,1)},
\]
where $T\colon\bR^{2n+1}\to\bR^{2n+1}$ is the linear operator
given by
\begin{align*}
 T(x_1,&\dots,x_{n},y_1,\dots,y_{n},z)\\
 &=(x_{n},x_1+x_{n},\dots,x_{n-1}+x_{n},
   y_{n},y_1+x_{n},\dots,y_{n-1}+x_{n},
   z+x_{n}).
\end{align*}
We note that the matrix of $T$ in the canonical basis is
lower triangular by blocks with three blocks on the
diagonal, namely the companion matrices of the
polynomials
\[
 p(x)=x^{n}-x^{n-1}-\cdots-x-1,
 \quad
 q(x)=x^{n}-1
 \et
 r(x)=x-1.
\]
Thus the characteristic polynomial of $T$ is
$p(x)q(x)r(x)$.  We also observe that $x=1$ is the only
multiple root of this product and that it is a double
root (this follows for example from
$p(x)r(x)=x^{n+1}-2x^{n}+1=(x-2)q(x)+(x-1)$).  On the
other hand, the eigenspace of $T$ for the eigenvalue
$1$ contains the vectors
\[
 \big(\underbrace{0,\dots,0}_{\textstyle{n}},
  \underbrace{1,\dots,1}_{\textstyle{n}},0\big)
 \et
 \big(\underbrace{0,\dots,0}_{\textstyle{n}},
  \underbrace{0,\dots,0}_{\textstyle{n}},1\big).
\]
So it has dimension 2 and thus the minimal polynomial of $T$
must be $m_T(x)=p(x)q(x)$.  Its roots in $\bC$ are all
simple and, by the comments made before Step 1, its root
$\rho=\rho_n$ is the only one of absolute value $>1$.  Moreover
\[
 \uv=\Big(1-\frac{1}{\rho},\dots,1-\frac{1}{\rho^{n}},
          1-\frac{1}{\rho},\dots,1-\frac{1}{\rho^{n}},1\Big)
\]
is an eigenvector of $T$ for $\rho=2-1/\rho^n$.
As $\sum_i(C/B)^{m(i,1)}<\infty$,
the approximation lemma \ref{lemme:approx} in the appendix yields a
constant $\alpha\in\bR$ such that the differences
$\uv_i-\alpha\rho^i\uv$ are bounded.  This means that for
each pair of integers $(i,j)$ with $1\le j \le n\le i$ we
have
\[
 |b_i^{(j)}| \asymp \norm{\uy_i^{(j)}} \asymp \exp(\alpha(\rho^i-\rho^{i-j}))
 \et
 X_i \asymp \exp(\alpha\rho^i),
\]
so $X_{i+1}\asymp X_i^\rho$ and
$\norm{\ux_i\wedge\ux_{i+1}}=\norm{\uy_{i+1}^{(1)}}
\asymp X_{i+1}/X_i$.
\end{proof}

\subsection*{Proof of Theorem \ref{construction:thm}}
Proposition \ref{construction:prop:asymp} provides a
sequence $(\ux_i)_{i\ge 0}$ in $U$ which satisfies the
five conditions (i) to (v) of the theorem.  By (v),
we have
\[
 \dist([\ux_i],[\ux_{i+1}])
  =\frac{\norm{\ux_i\wedge\ux_{i+1}}}{X_iX_{i+1}}
  \asymp X_i^{-2},
\]
and, by (iv), $X_i$ goes to infinity faster than any geometric
sequence. Thus $[\ux_i]$ converges to a point
$\xi\in\bP^n(\bR)$ with
\[
 D_\xi(\ux_i)=X_i\dist([\ux_i],\xi) \asymp X_i^{-1}.
\]
Choose a representative $\uxi\in\bR^{n+1}$ with $\norm{\uxi}=1$.  As
$\pm\/X_i^{-1}\ux_i$ converges to $\uxi$ for an appropriate choice of
signs, and as $q(\pm X_i^{-1}\ux_i)=X_i^{-2}$ converges to $0$,
we find that $q(\uxi)=0$, thus $\xi\in Z$. By Lemma \ref{est:lemma:li},
we deduce that $\xi\in\Zli$ because we have
$\lim_{i\to\infty}D_\xi(\ux_i)=0$ and the crucial property (iii) gives
$\Qspan{\ux_i,\dots,\ux_{i+n}}=\bQ^{n+1}$ for each $i\ge 0$. Finally,
for each sufficiently large $X$, there exists an index $i\ge 0$
such that $X_i\le X <X_{i+1}$ and, since $[\ux_i]\in E$, we
obtain
\[
 X^{1/\rho}\cD_\xi(X;E)
  \ll X^{1/\rho} D_\xi(\ux_i)
  \ll X_{i+1}^{1/\rho} X_i^{-1}
  \ll 1,
\]
showing that $\limsup_{X\to\infty}X^{1/\rho}\cD_\xi(X;E)<\infty$,
as announced.

%
%

\section{Quadratic forms of Witt index at most $1$}
\label{sec:proofiv}

In this section, we simply assume that $n\ge 1$ and we fix a
quadratic form $q$ on $\bQ^{n+1}$.  Our goal is to prove
Theorem \ref{results:thm2} (iv).  We start with a general
estimate which compares to \cite[Lemme Clef]{GR2017} for
isotropic subspaces of dimension $1$.

\begin{lemma}
 \label{lemma:xyz}
Suppose that $\ux$ and $\uy$ are linearly independent points
of\/ $\bQ^{n+1}$, that the subspace $W$ of\/ $\bQ^{n+1}$ that they
span is not totally isotropic, and that $q(\uy)=0$.
Put $\uz=\psi(\ux,\uy)$.  Then $\uz$ is non-zero with $q(\uz)=0$,
 and we have
\[
 \|\uy\|\,\|\uz\|\le 2\|q\|\, \|\ux\wedge\uy\|^2
\]
where $\|q\|:=\max\{|q_\bR(\ux)|\,;\,\|\ux\|=1\}$.
\end{lemma}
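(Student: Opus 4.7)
The plan is to handle the three conclusions separately: the isotropy $q(\uz)=0$ is immediate from Lemma \ref{forms:lemma:psi} (which gives $q(\uz) = q(\ux)^2 q(\uy) = 0$); the non-vanishing of $\uz$ follows by contradiction, since $\uz = 0$ would force $b(\ux,\uy)\ux = q(\ux)\uy$, and by linear independence of $\ux,\uy$ this gives $b(\ux,\uy) = q(\ux) = 0$, which together with $q(\uy) = 0$ makes $W$ totally isotropic, against the hypothesis.

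For the norm estimate, I would use the Euclidean orthogonal decomposition $\ux = c\uy + \ux^\perp$ with $\uy\cdot\ux^\perp = 0$, so that $\|\ux^\perp\|\,\|\uy\| = \|\ux\wedge\uy\|$. Substituting this into the definition of $\uz$ and using $q(\uy) = 0$, everything involving $c$ cancels and one obtains the cleaner formula
\[
 \uz = b(\uy,\ux^\perp)\,\ux^\perp - q(\ux^\perp)\,\uy.
\]
Since the two vectors on the right are Euclidean-orthogonal, Pythagoras yields
\[
 \|\uz\|^2 = b(\uy,\ux^\perp)^2\,\|\ux^\perp\|^2 + q(\ux^\perp)^2\,\|\uy\|^2
          = \|\uy\|^2\|\ux^\perp\|^4\bigl(b(\uy_1,\uv)^2 + q(\uv)^2\bigr),
\]
where $\uy_1 = \uy/\|\uy\|$ and $\uv = \ux^\perp/\|\ux^\perp\|$ are orthonormal.

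The remaining task is to show $b(\uy_1,\uv)^2 + q(\uv)^2 \le 4\|q\|^2$. For this I would look at the $2\times 2$ Gram matrix of $b$ on $\mathrm{span}(\uy_1,\uv)$ in the orthonormal basis $(\uy_1,\uv)$,
\[
 \tilde B = \begin{pmatrix} 0 & b(\uy_1,\uv) \\ b(\uy_1,\uv) & 2q(\uv) \end{pmatrix},
\]
whose vanishing top-left entry records $q(\uy_1)=0$. Its eigenvalues are $q(\uv)\pm\sqrt{q(\uv)^2 + b(\uy_1,\uv)^2}$, so the operator norm of $\tilde B$ equals $|q(\uv)| + \sqrt{q(\uv)^2 + b(\uy_1,\uv)^2}$. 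Since restriction cannot increase the operator norm and $\|B\|_{\mathrm{op}} = 2\|q\|$, we deduce $\sqrt{q(\uv)^2 + b(\uy_1,\uv)^2} \le 2\|q\| - |q(\uv)| \le 2\|q\|$. Plugging back gives $\|\uz\| \le 2\|q\|\,\|\uy\|\,\|\ux^\perp\|^2$, hence $\|\uy\|\,\|\uz\| \le 2\|q\|\,\|\ux\wedge\uy\|^2$.

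The main obstacle is obtaining the \emph{sharp} constant $2\|q\|$: applying the crude bounds $|b(\uu,\uv)| \le 2\|q\|\,\|\uu\|\,\|\uv\|$ and $|q(\uw)| \le \|q\|\,\|\uw\|^2$ directly in Pythagoras only gives $\sqrt{5}\,\|q\|$. The gain comes from exploiting the vanishing of the diagonal entry of $\tilde B$ coming from $q(\uy_1)=0$, which ties $b(\uy_1,\uv)$ and $q(\uv)$ together through the operator-norm constraint on a rank-two symmetric matrix.
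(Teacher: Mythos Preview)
Your argument is correct and self-contained, but it proceeds along a different line than the paper's own proof. The paper does not decompose $\ux$ orthogonally with respect to $\uy$; instead it picks a unit vector $\uu\in\bR^{n+1}$ making an angle at most $\pi/4$ with each of the lines $\bR\uy$ and $\bR\uz$ (possible since any two lines in $\bR^{n+1}$ make an angle at most $\pi/2$), sets $\uw=(\uu\cdot\uy)\ux-(\uu\cdot\ux)\uy$, checks that $q_\bR(\uw)=-(\uu\cdot\uy)(\uu\cdot\uz)$ (this is where $q(\uy)=0$ enters), and then combines $|\uu\cdot\uy|\ge \|\uy\|/\sqrt{2}$, $|\uu\cdot\uz|\ge \|\uz\|/\sqrt{2}$, $|q_\bR(\uw)|\le\|q\|\,\|\uw\|^2$ and the contraction bound $\|\uw\|\le\|\ux\wedge\uy\|$ from \eqref{est:eq:contraction} to obtain $\tfrac12\|\uy\|\,\|\uz\|\le\|q\|\,\|\ux\wedge\uy\|^2$ in one line.

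Your route---rewriting $\uz$ in terms of $\ux^\perp$ and $\uy$, applying Pythagoras, and then controlling the $2\times2$ Gram matrix via its eigenvalues---is longer but perhaps more transparent about where the sharp constant $2\|q\|$ originates: you isolate precisely the inequality $|q(\uv)|+\sqrt{q(\uv)^2+b(\uy_1,\uv)^2}\le 2\|q\|$, which is the operator-norm bound for the restricted form exploiting the zero in the $(1,1)$ entry. The paper's choice of $\uu$ achieves the same saving implicitly, by arranging that both $\uy$ and $\uz$ contribute a factor $1/\sqrt{2}$ simultaneously. Either way the key structural input is $q(\uy)=0$; your proof and the paper's just package it differently.
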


\begin{proof}
As $\{\ux,\uy\}$ is a basis of $W$ and as $W$ is not totally isotropic,
we have $b(\ux,\uy)\neq 0$ or $q(\ux)\neq 0$, and so
$\uz=b(\ux,\uy)\ux-q(\ux)\uy$ is nonzero.  We also have
$q(\uz)=q(\ux)^2q(\uy)=0$ by Lemma \ref{forms:lemma:psi}.  Since the angle between
the lines spanned by $\uy$ and $\uz$ in $\bR^{n+1}$ is at most $\pi/2$,
there is a vector $\uu\in\bR^{n+1}$ of norm $1$ which makes angles of
at most $\pi/4$ with each of those lines.  This means that
\[
 |\uu\cdot\uy|\ge \frac{1}{\sqrt{2}}\|\uy\|
 \et
 |\uu\cdot\uz|\ge \frac{1}{\sqrt{2}}\|\uz\|
\]
where the dot represents the standard scalar product in $\bR^{n+1}$.  The point
\[
 \uw:=(\uu\cdot\uy)\ux-(\uu\cdot\ux)\uy
\]
obtained by contraction of $\ux\wedge\uy$ with $\uu$ has norm
$\|\uw\|\le \|\uu\|\,\|\ux\wedge\uy\| = \|\ux\wedge\uy\|$
by \eqref{est:eq:contraction}.  Since $q(\uy)=0$, we find that
\[
 q_\bR(\uw)=(\uu\cdot\uy)^2q(\ux)-(\uu\cdot\ux)(\uu\cdot\uy)b(\ux,\uy)
       =-(\uu\cdot\uy)(\uu\cdot\uz).
\]
Altogether, this yields
\[
 \frac{1}{2}\|\uy\|\,\|\uz\|
   \le |q_\bR(\uw)|
   \le \|q\|\,\|\uw\|^2
   \le \|q\|\,\|\ux\wedge\uy\|^2.
\qedhere
\]
\end{proof}

\begin{cor}
\label{proofiv:height}
Suppose further that $\ux,\uy\in\bZ^{n+1}$, then we have
$\norm{\uy}\le c\norm{\ux\wedge\uy}^2$
for a constant $c>0$ depending only on $q$.
\end{cor}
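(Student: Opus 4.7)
The plan is to deduce the corollary directly from Lemma \ref{lemma:xyz} by using that the auxiliary vector $\uz = \psi(\ux,\uy)$ cannot be too small when $\ux,\uy$ have integer coordinates.

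First I would retain all the hypotheses of Lemma \ref{lemma:xyz} (linear independence, $W$ not totally isotropic, $q(\uy)=0$), which are inherited by the corollary via the phrase ``Suppose further''. Applying the lemma gives the key bound
\[
 \|\uy\|\,\|\uz\| \le 2\|q\|\,\|\ux\wedge\uy\|^2,
\]
with $\uz$ nonzero. It remains to show that $\|\uz\|$ is bounded below by a positive constant depending only on $q$.

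The main step is a denominator clearance. Writing $q(\ux) = \tfrac{1}{2}\ux^{\mathrm{T}}A\ux$ for a symmetric matrix $A$ with rational entries, choose a positive integer $N = N(q)$ such that $NA$ has integer entries. Then for $\ux,\uy\in\bZ^{n+1}$ both $2Nq(\ux)$ and $2Nb(\ux,\uy)$ are integers, so
\[
 2N\uz \;=\; 2Nb(\ux,\uy)\,\ux - 2Nq(\ux)\,\uy \;\in\; \bZ^{n+1}.
\]
Since $\uz \ne 0$, the vector $2N\uz$ is a nonzero integer vector, hence $\|2N\uz\| \ge 1$, i.e.\ $\|\uz\| \ge 1/(2N)$.

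Combining the two bounds yields
\[
 \|\uy\| \;\le\; \frac{2\|q\|\,\|\ux\wedge\uy\|^2}{\|\uz\|} \;\le\; 4N\|q\|\,\|\ux\wedge\uy\|^2,
\]
so the corollary holds with $c = 4N\|q\|$, which depends only on $q$. There is no real obstacle here; the only subtle point is the denominator bookkeeping for $\uz$, which is handled by the factor $2N$ arising from the $\tfrac{1}{2}$-normalization of $q$ in relation to $b$.
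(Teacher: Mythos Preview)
Your proof is correct and follows essentially the same approach as the paper: both apply Lemma~\ref{lemma:xyz} and then clear denominators (the paper picks $m$ with $mq(\bZ^{n+1})\subseteq\bZ$, you pick $2N$ with $NA\in M_{n+1}(\bZ)$) so that an integer multiple of $\uz$ is a nonzero lattice point, forcing $\|\uz\|$ to be bounded below by a constant depending only on $q$.
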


\begin{proof}
Choose an integer $m\ge 1$ such that
$mq(\bZ^{n+1})\subseteq\bZ$.  Then $m\uz$ is a non-zero
integer point, so $\norm{\uz}\ge 1/m$ and therefore
$\norm{\uy}\le 2m\norm{q}\norm{\ux\wedge\uy}^2$.
\end{proof}

We can now state and prove the main result of this section.
In a corollary below, we will show that it implies
Theorem \ref{results:thm2} (iv).

\begin{proposition}
\label{proofiv:prop}
Let $Z=Z(q_\bR)\subseteq\bP^n(\bR)$.
Suppose that $q$ has Witt index $m\le 1$ and that
a point $\xi\in\Zli$ has $\lambdahat(\xi)>1/2$. Then, we have
$D_\xi(\ux)\gg \norm{\ux}^{-1/2}$ for each
non-zero $\ux\in\bZ^{n+1}$ with $q(\ux)=0$.
\end{proposition}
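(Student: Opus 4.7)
Plan. The idea is to apply Corollary~\ref{proofiv:height} to a pair $(\uv,\ux)$, where $\uv$ is a well-chosen rational approximation of $\xi$: after relabelling the variables of that corollary so that $\ux$ plays the role of the isotropic point, this gives $\norm{\ux}\ll\norm{\uv\wedge\ux}^2$ provided $\uv,\ux$ are linearly independent and span a non-totally-isotropic plane. Combining with the triangle inequality $\norm{\uv\wedge\ux}\le\norm{\ux}D_\xi(\uv)+\norm{\uv}D_\xi(\ux)$ (immediate from $\dist([\uv],[\ux])\le\dist(\xi,[\uv])+\dist(\xi,[\ux])$) and a good approximation furnished by $\lambdahat(\xi)>1/2$, I expect to deduce the required lower bound on $D_\xi(\ux)$.

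First I dispose of the case $\ux\in\ker q$: then $\ux$ lies in the proper rational subspace $\ker q\otimes_\bQ\bR$ of $\bR^{n+1}$, which $\uxi$ avoids because its coordinates are $\bQ$-linearly independent. The angle between $\bR\ux$ and $\bR\uxi$ is therefore bounded below by a positive constant depending only on $\xi$ and $q$, giving $D_\xi(\ux)\gg\norm{\ux}$, far stronger than required.

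Assume from now on $\ux\notin\ker q$. Using $m\le 1$, the non-degenerate form induced by $q$ on $\{\ux\}^\perp/(\bQ\ux+\ker q)$ has Witt index at most $m-1\le 0$, hence is anisotropic; consequently the set $T(\ux):=\{\uv:q(\uv)=b(\uv,\ux)=0\}$ of $\uv$ for which $\mathrm{span}(\uv,\ux)$ is totally isotropic equals $\bQ\ux+\ker q$, a proper rational subspace of $\bQ^{n+1}$. Fix $\lambda\in(1/2,\lambdahat(\xi))$, assume $\ux$ primitive (a routine reduction), and set $X_0:=\norm{\ux}$. For each large $X\le X_0-1$ the hypothesis gives a primitive $\uv\in\bZ^{n+1}$ with $\norm{\uv}\le X$ and $D_\xi(\uv)\le X^{-\lambda}$; since $\norm{\uv}<X_0$, $\uv$ cannot equal $\pm\ux$, so $\uv$ and $\ux$ are linearly independent. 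Provided $\uv\notin T(\ux)$, Corollary~\ref{proofiv:height} yields
\[
 X_0 \le c_1 \bigl(X_0 X^{-\lambda} + X\,D_\xi(\ux)\bigr)^2.
\]
Choosing $X\asymp X_0$, either $X\,D_\xi(\ux)\ge X_0 X^{-\lambda}$ (whence $X_0\ll X_0^2 D_\xi(\ux)^2$ and thus $D_\xi(\ux)\gg X_0^{-1/2}$), or $D_\xi(\ux)<X_0^{-\lambda}$ (forcing $X_0\ll X_0^{2-2\lambda}$, impossible for $X_0$ large since $\lambda>1/2$). The finitely many cases with $X_0$ bounded cause no problem because $D_\xi$ is strictly positive on $\bZ^{n+1}\setminus\{0\}$.

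The principal obstacle is guaranteeing that $\uv$ can be chosen outside $T(\ux)=\bQ\ux+\ker q$. The angle between $\bR\uxi$ and $T(\ux)\otimes\bR$ can in principle be as small as $D_\xi(\ux)/X_0$---exactly the regime we wish to exclude---so a direct angle-to-subspace argument is insufficient. I expect this point to be handled by decomposing any hypothetical $\uv=a\ux+\uw\in T(\ux)$ with $a\in\bQ$ and $\uw\in\ker q$, and exploiting the $\ux$-independent positive angle between $\ker q\otimes\bR$ and $\bR\uxi$ to control $\norm{\uw}$, then $|a|$, and hence $\norm{\uv}$ against $D_\xi(\uv)$, ultimately forcing a contradiction with $\norm{\uv}\le X$ and $D_\xi(\uv)\le X^{-\lambda}$; alternatively, by considering approximations at slightly different scales and using that the proper rational subspace $T(\ux)$ cannot absorb all of them.
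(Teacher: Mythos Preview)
Your argument in the non-degenerate case is correct and matches the paper's: there, totally isotropic subspaces have dimension at most $m\le 1$, so \emph{any} two linearly independent vectors automatically span a non-totally-isotropic plane, and Corollary~\ref{proofiv:height} together with the triangle inequality and a good approximant $\uv$ gives the bound exactly as you describe. (Your side analysis of $T(\ux)$ is not even needed there: linear independence of $\uv$ and $\ux$ is already enough.)

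The genuine gap is the degenerate case, which you correctly flag but do not close. Your identification $T(\ux)=\bQ\ux+\ker q$ is right, yet ensuring $\uv\notin T(\ux)$ is a real obstacle, and neither of your sketched fixes is carried out. The paper sidesteps the issue entirely by \emph{reducing to the non-degenerate case}. Concretely, choose $T\in\GL_{n+1}(\bQ)$ with $T(\bZ^{n+1})=\bZ^{n+1}$ and $T^{-1}(\ker q)=\{0\}^{r+1}\times\bQ^{n-r}$, so that $q\circ T$ depends only on the first $r+1$ coordinates through a non-degenerate form $\tq$ on $\bQ^{r+1}$. Writing $\ueta=T_\bR^{-1}(\uxi)$ and $\tueta$ for its first $r+1$ coordinates, one has $[\tueta]\in\tZli$ with $\lambdahat([\tueta])\ge\lambdahat(\xi)>1/2$, and the already-established non-degenerate case gives $\norm{\tuy\wedge\tueta}\gg\norm{\tuy}^{-1/2}$ for every non-zero $\tuy\in\bZ^{r+1}$ with $\tq(\tuy)=0$. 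For a general isotropic $\ux\in\bZ^{n+1}$, set $\uy=T^{-1}(\ux)$ and let $\tuy$ be its first $r+1$ coordinates; if $\norm{\ux\wedge\uxi}$ is small enough then $\tuy\neq 0$, $\norm{\tuy}\ll\norm{\ux}$ and $\norm{\tuy\wedge\tueta}\ll\norm{\ux\wedge\uxi}$, and the conclusion follows. This reduction simultaneously absorbs your separate treatment of $\ux\in\ker q$. Your proposed direct attack---decomposing $\uv=a\ux+\uw$ with $\uw\in\ker q$ and exploiting the positive angle between $\ker q\otimes\bR$ and $\bR\uxi$---can be made to work, but it amounts to re-deriving this projection argument by hand; the paper's route is cleaner because in the projected picture the obstacle simply does not exist.
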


\begin{proof}
Suppose first that $q$ is non-degenerate.  Then, the
maximal totally isotropic subspaces of $\bQ^{n+1}$
have dimension $m\le 1$.  Choose a
representative $\uxi$ of $\xi$ in $\bR^{n+1}$ with
$\norm{\uxi}=1$, a real number $\lambda$ with
$1/2<\lambda<\lambdahat(\xi)$, and $X_0\ge 1$ such
that $\cD_\xi(X)\le X^{-\lambda}$ for each $X\ge X_0$.
Suppose that $\uy$ is a primitive point of $\bZ^{n+1}$
with $q(\uy)=0$ and $\norm{\uy}\ge 2X_0$.  There
exists a non-zero point $\ux\in\bZ^{n+1}$ with
$\norm{\ux} \le \norm{\uy}/2$ and $D_\xi(\ux)
= \norm{\ux\wedge\uxi} \le (\norm{\uy}/2)^{-\lambda}$.
Then $\langle\ux,\uy\rangle_\bQ$ is a subspace
of $\bQ^{n+1}$ of dimension $2$, and so it is not
totally isotropic.  Since $q(\uy)=0$, Corollary
\ref{proofiv:height} gives
\[
 \norm{\uy} \le c\norm{\ux\wedge\uy}^2
\]
with $c=c(q)>0$. On the other hand, the triangle
inequality $\dist(\ux,\uy)\le \dist(\ux,\uxi)+\dist(\uy,\uxi)$
yields
\[
 \norm{\ux\wedge\uy}
  \le \norm{\ux}\,\norm{\uy\wedge\uxi}
      + \norm{\uy}\,\norm{\ux\wedge\uxi}
  \le \frac{1}{2}\norm{\uy}\,\norm{\uy\wedge\uxi}
      + 2^\lambda\,\norm{\uy}^{1-\lambda}.
\]
Altogether, this implies that
\[
 \norm{\uy\wedge\uxi}
 \ge 2\big(c^{-1/2}\norm{\uy}^{1/2}
           -2^\lambda\norm{\uy}^{1-\lambda}\big)
      \norm{\uy}^{-1}
\]
and thus $D_\xi(\uy)\ge c^{-1/2}\norm{\uy}^{-1/2}$ if
$\norm{\uy}$ is large enough.  We conclude that
$D_\xi(\uy)\gg \norm{\uy}^{-1/2}$ for any primitive
point $\uy$ of $\bZ^{n+1}$, and thus for any non-zero
$\uy\in\bZ^{n+1}$.

In general, $q$ has rank $r+1$ for an integer $r$ with
$1\le r\le n$ (we have $r\ge 1$ since $\Zli\neq\emptyset$).
Choose a representative $\uxi$ of
$\xi$ in $\bR^{n+1}$, and a $\bQ$-linear automorphism
$T$ of $\bQ^{n+1}$ such that $T(\bZ^{n+1})=\bZ^{n+1}$ and
\[
 \{0\}^{r+1}\times\bQ^{n-r} = T^{-1}(\ker(q)) = \ker(q\circ T).
\]
Then there is a non-degenerate quadratic form
$\tq\colon\bQ^{r+1}\to\bQ$ such that
\[
 (q\circ T)(x_0,\dots,x_n)=\tq(x_0,\dots,x_r)
\]
for each $(x_0,\dots,x_n)\in\bQ^{n+1}$.
Set $\ueta=T_\bR^{-1}(\uxi)$ and write
$\ueta=(\eta_0,\dots,\eta_n)$.  Then $\ueta$ has linearly
independent coordinates over $\bQ$ and the point
$\eta=[\ueta]$ satisfies $\lambdahat(\eta)=\lambdahat(\xi)$.
Moreover $\tueta=(\eta_0,\dots,\eta_r)$ is a zero of
$\tq_\bR$.  Thus $\teta=[\tueta]$ belongs to $\tZli$ where
$\tZ=Z(\tq_\bR)$ is the quadratic hypersurface of $\bP^r(\bR)$
associated to $\tq$.  We also have $\lambdahat(\teta)
\ge \lambdahat(\eta)$, thus $\lambdahat(\teta)>1/2$.  By the
above, this implies that $\norm{\tuy\wedge\tueta}\gg
\norm{\tuy}^{-1/2}$ for any non-zero $\tuy\in\bZ^{r+1}$.
Now, let $\ux\in\bZ^{n+1}\setminus\{0\}$.
Set $\uy=T_\bR^{-1}(\ux)$ and write $\uy=(y_0,\dots,y_n)$.
We have $\norm{\uy\wedge\ueta}\asymp\norm{\ux\wedge\uxi}$.
So, if $\norm{\ux\wedge\uxi}$ is small enough, the point
$\tuy=(y_0,\dots,y_r)$ is non-zero with
\[
 \norm{\tuy}\le\norm{\uy}\ll\norm{\ux}
 \et
 \norm{\tuy\wedge\tueta}
   \le \norm{\uy\wedge\ueta}
   \ll \norm{\ux\wedge\uxi},
\]
thus $\norm{\ux\wedge\uxi}\gg \norm{\ux}^{-1/2}$.
\end{proof}

\begin{cor}
Let $E=\bP^n(\bQ)\setminus Z(\bQ)$.  With the hypotheses
of the previous proposition, we have $\cD_\xi(X)=\cD_\xi(X;E)$
for any sufficiently large $X$.
\end{cor}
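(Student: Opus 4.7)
The plan is to deduce the corollary directly from Proposition~\ref{proofiv:prop} by showing that, for all sufficiently large $X$, every integer point of norm $\le X$ achieving the minimum $\cD_\xi(X)$ must lie outside $Z(\bQ)$. Since the inequality $\cD_\xi(X)\le\cD_\xi(X;E)$ is immediate from the inclusion of admissible sets, equality will follow at once.

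First, choose $\lambda$ with $1/2<\lambda<\lambdahat(\xi)$. By definition of $\lambdahat(\xi)$, there exists $X_0\ge 1$ such that $\cD_\xi(X)\le X^{-\lambda}$ for every $X\ge X_0$. On the other hand, Proposition~\ref{proofiv:prop} provides a constant $c>0$ (depending only on $\xi$ and $q$) such that
\[
 D_\xi(\uy)\ge c\,\norm{\uy}^{-1/2}
 \quad\text{for every non-zero } \uy\in\bZ^{n+1}\text{ with }q(\uy)=0.
\]

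Now fix $X\ge X_0$ and let $\ux^*\in\bZ^{n+1}\setminus\{0\}$ with $\norm{\ux^*}\le X$ attain the minimum in the definition of $\cD_\xi(X)$. Suppose, for contradiction, that $q(\ux^*)=0$. Combining the two estimates above yields
\[
 c\,X^{-1/2}\le c\,\norm{\ux^*}^{-1/2}\le D_\xi(\ux^*)=\cD_\xi(X)\le X^{-\lambda},
\]
so that $X^{\lambda-1/2}\le 1/c$. Since $\lambda-1/2>0$, this bounds $X$ by the explicit constant $c^{-1/(\lambda-1/2)}$, and therefore fails once $X$ exceeds this bound.

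Hence, for all sufficiently large $X$, the minimizer $\ux^*$ satisfies $q(\ux^*)\neq 0$, i.e.\ $[\ux^*]\in E$. Consequently $\cD_\xi(X;E)\le D_\xi(\ux^*)=\cD_\xi(X)$, which together with the reverse inequality gives $\cD_\xi(X)=\cD_\xi(X;E)$, as required. The argument is entirely routine once Proposition~\ref{proofiv:prop} is available, so there is no real obstacle to overcome.
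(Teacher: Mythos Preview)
Your proof is correct and follows essentially the same route as the paper: both use Proposition~\ref{proofiv:prop} to get $D_\xi(\uy)\gg\norm{\uy}^{-1/2}\ge X^{-1/2}$ for integer zeros $\uy$ of $q$ with $\norm{\uy}\le X$, then compare with $\cD_\xi(X)\le X^{-\lambda}$ for some $\lambda>1/2$ to rule out such points as minimizers when $X$ is large. The paper phrases this as $\cD_\xi(X;Z(\bQ))\gg X^{-1/2}>\cD_\xi(X)$, while you argue by contradiction on the minimizer; the content is identical.
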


\begin{proof}
By Proposition \ref{proofiv:prop}, we have
$\cD_\xi(X;Z(\bQ))\gg X^{-1/2}$ for each $X\ge 1$.  Since
$\lambdahat(\xi)>1/2$, this yields $\cD_\xi(X;Z(\bQ)) >
\cD_\xi(X)$ and so $\cD_\xi(X)=\cD_\xi(X;E)$, for
each large enough $X$.
\end{proof}

%
%

\section{Quadratic forms of higher Witt index}
\label{sec:proofv}

In this last section, we assume that $q$ is a quadratic form on
$\bQ^{n+1}$ of Witt index $m\ge 2$.  Then the rank of $q$ is at
least $2m\ge 4$ and so we have $n\ge 3$.
We denote by $K=\ker(q)$ the kernel of $q$, by $Z=Z(q_\bR)$
the associated quadratic hypersurface of $\bP^n(\bR)$, and by
$Z(\bQ)$ the set of rational points of $Z$.  We start by proving
the first assertion of Theorem \ref{results:thm2} (v).

\begin{proposition}
Suppose that a point $\xi\in\Zli$ has $\lambdahat(\xi)>1/\rho_n$.
Then for each sufficiently large $X$ we have
$\cD_\xi(X)=\cD_\xi(X;Z(\bQ))$.
\end{proposition}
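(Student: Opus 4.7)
The plan is to argue by contradiction. I will suppose that for some sequence $X_k\to\infty$ the overall minimum $\cD_\xi(X_k)$ is attained by a primitive integer point $\ux_k$ with $q(\ux_k)\ne 0$, so that $\cD_\xi(X_k;Z(\bQ))>\cD_\xi(X_k)$, and derive an inconsistency with $\lambdahat(\xi)>1/\rho_n$. The first step is to fix $\lambda\in(1/\rho_n,\lambdahat(\xi))$, which guarantees $\cD_\xi(X)\le X^{-\lambda}$ for all $X\ge X_0$. Along the bad sequence the points $\ux_k$ then lie in $S:=\{\uy\in\bZ^{n+1}\setminus\{0\}:q(\uy)\ne 0\}$ with $L_\uxi(\ux_k)\le X_k^{-\lambda}$, and since $q(\ux_k)$ is a non-zero integer (after clearing denominators in $q$), Lemma \ref{est:lemma}(ii) forces $\norm{\ux_k}\gg X_k^\lambda$.

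The endgame will be Theorem \ref{NPR:thm} applied to $S$ with exponents $\alpha=\lambda$ and $\beta=1$. Lemma \ref{outside:lemma} supplies the lower bound $\cL_\uxi(X;S)\ge bX^{-1}$ (the $\beta$ side), and the bad approximants $\ux_k$ supply the matching upper bound $\cL_\uxi(X_k;S)\le X_k^{-\lambda}$ along the sparse sequence (the $\alpha$ side). If I can upgrade this upper bound so that it holds for \emph{every} sufficiently large $X$, the theorem will force
\[
 \lambda+\lambda^2+\cdots+\lambda^n\le 1,
\]
hence $\lambda\le 1/\rho_n$, which contradicts the choice of $\lambda$.

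Bridging the gap between the sparse subsequence and all large $X$ is the crux of the proof, and it is precisely here that the hypothesis $m\ge 2$ must be used. My plan is to propagate the $\ux_k$'s to every intermediate scale through the polynomial map $\psi$ of Lemma \ref{forms:lemma:psi}: because $q(\psi(\ux,\uy))=q(\ux)^2q(\uy)\ne 0$ whenever $\ux,\uy\in S$, I can iterate $\psi$ in the spirit of Proposition \ref{construction:prop:asymp} to generate a dense family of non-isotropic approximants. The rank and Witt index hypotheses of Proposition \ref{equiv:prop:evitement}, guaranteed by $m\ge 2$ (which yields rank at least $4$), will let me choose auxiliary points $\uy$ avoiding prescribed finite unions of proper subspaces, preserving both the linear independence of consecutive iterates and the sharp size and quality estimates of Lemma \ref{est:lemma} along the iteration. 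The main obstacle is carrying through this propagation quantitatively so as to obtain, at every scale, the uniform bound $\cL_\uxi(X;S)\le c'X^{-\lambda}$ demanded by Theorem \ref{NPR:thm}: the chain of $\psi$-estimates must be tuned so that the growth of norms and the decay of $L_\uxi$-values match the exponent $\lambda$ at every step. Once this uniform bound is in hand, Theorem \ref{NPR:thm} closes the argument at once.
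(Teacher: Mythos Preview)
Your proposal has a genuine gap at the bridging step, which you yourself flag as the crux. The proposed mechanism---iterating $\psi$ on the $\ux_k$---does not work as stated. First, you invoke Proposition~\ref{equiv:prop:evitement} to supply auxiliary points avoiding prescribed subspaces, but that proposition produces \emph{zeros} of $q$, while you need non-zeros (points of $S$); this is a misapplication, and the hypothesis $m\ge2$ enters nowhere in any argument about non-isotropic approximants. Second, the estimates of Lemma~\ref{est:lemma} for $\norm{\psi(\ux,\uy)}$ and $L_\uxi(\psi(\ux,\uy))$ give no evident way to manufacture $S$-points at \emph{every} intermediate scale with quality $\ll X^{-\lambda}$: a quick check shows that applying $\psi$ to two of your $\ux_k$'s typically yields a point whose ratio $L_\uxi\cdot\norm{\,\cdot\,}^{\lambda}$ blows up. In fact, by Part~(i) of Theorem~\ref{results:thm2} (already proved), the uniform bound $\cL_\uxi(X;S)\le c'X^{-\lambda}$ you aim for is \emph{false} whenever $\lambda>1/\rho_n$; so any valid bridging would have to lean essentially on the contradiction hypothesis, and you give no such link.

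The paper takes a different and much shorter route. It cites Part~(i) directly: since $\limsup X^{1/\rho_n}\cD_\xi(X;E)>0$ while $\cD_\xi(X)\le X^{-\lambda}$ with $\lambda>1/\rho_n$, one gets $\cD_\xi(X)<\cD_\xi(X;E)$, hence $\cD_\xi(X)=\cD_\xi(X;Z(\bQ))$, for \emph{arbitrarily large} $X$. The remaining task is a local bridge: if the equality holds at some large $Y$, show it holds on all of $[Y/2,Y]$. Let $\uy$ be the best isotropic approximant at scale $Y$ and $\ux$ the best overall approximant at scale $X\in[Y/2,Y]$. If $\norm{\uy}\le Y/2$ then $\uy$ already works at scale $X$. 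Otherwise, either $\Qspan{\ux,\uy}$ is totally isotropic (forcing $q(\ux)=0$), or Corollary~\ref{proofiv:height} gives $Y/2\le\norm{\uy}\ll\norm{\ux\wedge\uy}^2\ll Y^{2(1-\lambda)}$, bounding $Y$ since $\lambda>1/2$. Thus the bridging uses a height inequality for isotropic points, not $\psi$-propagation in $S$, and your endgame---the NPR inequality with $\alpha=\lambda$---is already packaged inside Part~(i), which the paper simply quotes.
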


\begin{proof}
Set $E=\bP^n(\bQ)\setminus Z$, and choose $\lambda\in\bR$ with
$1/\rho_n<\lambda<\lambdahat(\xi)$.
Then we have $\cD_\xi(X)\le X^{-\lambda}$ for all sufficiently large $X$
while Theorem \ref{results:thm2} (i) gives $\cD_\xi(X;E)> X^{-\lambda}$
for arbitrarily large values of $X$.  Altogether, this means that
$\cD_\xi(X)=\cD_\xi(X;Z(\bQ))$ for arbitrarily large values of $X$.
Now, suppose that $\cD_\xi(Y)=\cD_\xi(Y;Z(\bQ))$ for some $Y\ge 2$.
To conclude it suffices to show that, if $Y$ is large enough,
we also have $\cD_\xi(X)=\cD_\xi(X;Z(\bQ))$ for each $X\in[Y/2,Y]$.

To prove this claim, choose a zero $\uy$ of $q$ in
$\bZ^{n+1}\setminus\{0\}$ with $\norm{\uy}\le Y$ and
$D_\xi(\uy)=\cD_\xi(Y;Z(\bQ))$. Then $\uy$ is a primitive point
of $\bZ^{n+1}$.  Choose also $X\in[Y/2,Y]$ and
a point $\ux\in\bZ^{n+1}\setminus\{0\}$ with $\norm{\ux}\le X$
and $D_\xi(\ux)=\cD_\xi(X)$.  If $\norm{\uy} \le Y/2$, we may
take $\ux=\uy$ and thus $\cD_\xi(X)=\cD_\xi(X;Z(\bQ))$.
So, we may assume that $\norm{\uy} \ge Y/2$.  If
$\Qspan{\ux,\uy}$ has dimension $2$ and is not totally
isotropic, then using Corollary \ref{proofiv:height} we obtain
\[
 \frac{Y}{2}
   \le \norm{\uy}
   \ll \norm{\ux\wedge\uy}^2
   \ll \big(\norm{\ux}D_\xi(\uy)+\norm{\uy}D_\xi(\ux)\big)^2
   \ll Y^{2(1-\lambda)}
\]
and so $Y$ is bounded from above because $\lambda>1/\rho_n>1/2$.
Otherwise, we have $q(\ux)=0$ and so $\cD_\xi(X)=\cD_\xi(X;Z(\bQ))$.
\end{proof}

Under the hypotheses of the proposition, it can also be shown
by a simple adaptation of the proof that, if $(\ux_i)_{i\ge 1}$
is a sequence of minimal points for $\xi$ with respect
to $\bZ^{n+1}$ (as defined for example in \cite[\S 2]{NPR}),
then $\Qspan{\ux_i,\ux_{i-1}}$ is a totally isotropic subspace
of dimension $2$ for each sufficiently large $i$.  Moreover,
since $\xi\in\Zli$, we also have $\ux_i\notin K$ for all
large enough $i$.  We mention this to motivate the construction
that we undertake below to prove the second assertion
of Theorem \ref{results:thm2} (v). However, before we proceed,
we need the following crucial fact (a separate argument will be
required for the case of rank $4$).

\begin{lemma}
\label{proofv:lemma:evitement}
Suppose that $q$ has rank at least $5$.
Let $\ux\in\bQ^{n+1}$ be a zero of $q$ outside of $K$ and let
$V=\langle \ux\rangle_\bQ^\perp$.   Then, for each finite
set of proper subspaces of $\bQ^{n+1}$not containing $V$,
there is a zero of $q$ in $V$ which lies outside of each of
these subspaces.
\end{lemma}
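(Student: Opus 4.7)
The plan is to apply Proposition \ref{equiv:prop:evitement} to the restriction $q|_V$ on $V$. Observe first that each $U_i$ in the given finite family of proper subspaces of $\bQ^{n+1}$ intersects $V$ in a proper subspace of $V$, precisely because $V\not\subseteq U_i$ by hypothesis. Thus any zero of $q|_V$ in $V$ lying outside $\bigcup_i(U_i\cap V)$ is automatically outside $\bigcup_i U_i$. So it suffices to prove that $q|_V$ has rank $\ge 3$ and Witt index $\ge 1$, so that Proposition \ref{equiv:prop:evitement} applies within $V$.

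For the rank and Witt index of $q|_V$, I would pass to the non-degenerate quotient $\bQ^{n+1}/K$, which has dimension equal to the rank $r\ge 5$ of $q$ and Witt index $m\ge 2$. The hypothesis $\ux\notin K$ means that $\bar\ux:=\ux+K$ is a non-zero isotropic vector in this quotient, and that $V$ is a hyperplane of $\bQ^{n+1}$ containing both $K$ and $\ux$ (since $K$ is orthogonal to everything and $q(\ux)=0$ forces $b(\ux,\ux)=0$). A quick dimension count in the non-degenerate quotient gives $V^\perp=K\oplus\langle\ux\rangle$ and hence
\[
 \ker(q|_V)=V\cap V^\perp = K\oplus\langle\ux\rangle,
\]
of dimension $n+2-r$, so $q|_V$ has rank $r-2\ge 3$. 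To compute its Witt index, I would invoke the standard consequence of Witt's extension theorem to pick a Witt decomposition
\[
 \bQ^{n+1}/K = H_1'\perp H_2'\perp\cdots\perp H_m'\perp W_0'
\]
with $\bar\ux$ one of the two isotropic generators of the hyperbolic plane $H_1'$. Then $\bar\ux^\perp\cap H_1'=\langle\bar\ux\rangle$ gives $V/K=\langle\bar\ux\rangle\perp H_2'\perp\cdots\perp H_m'\perp W_0'$, so modulo its kernel $q|_V$ becomes isometric to $H_2'\perp\cdots\perp H_m'\perp W_0'$, whose Witt index is $(m-1)+\text{Witt index}(W_0')\ge m-1\ge 1$.

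The main obstacle is really this bookkeeping for the rank and Witt index of $q|_V$: keeping track of the kernel when passing to and from the quotient by $K$, and isolating a hyperbolic plane containing the class $\bar\ux$ inside $\bQ^{n+1}/K$. Once the two inequalities above are in place, Proposition \ref{equiv:prop:evitement} applied to the quadratic space $(V,q|_V)$ yields a zero of $q$ in $V$ avoiding any prescribed finite union of proper subspaces of $V$, which concludes the proof via the reduction of the first paragraph.
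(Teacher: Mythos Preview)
Your proposal is correct and follows essentially the same route as the paper: both arguments show that $q|_V$ has rank at least $3$ and Witt index at least $1$ by isolating a hyperbolic plane through $\ux$ (or through its image in $\bQ^{n+1}/K$) in a Witt decomposition, and then invoke Proposition~\ref{equiv:prop:evitement}. The paper works directly in $\bQ^{n+1}$, writing $\bQ^{n+1}=H_1\perp H_2\perp W\perp K$ with $\ux\in H_1$ and observing that the non-degenerate piece $H_2\perp W$ of dimension $r-2\ge 3$ lies inside $V$; you instead pass to the non-degenerate quotient and compute $\ker(q|_V)$ explicitly, but this is a presentational difference rather than a genuinely different method.
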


\begin{proof}
Since $\ux$ is a zero of $q$ not in $K$, it is contained
in a hyperbolic plane $H_1$.  Since $q$ has Witt index at
least $2$ and rank $r\ge 5$, we may write
$\bQ^{n+1}=H_1\perp H_2\perp W \perp K$ where $H_2$ is
another hyperbolic plane and $W$ is a non-degenerate
subspace of dimension $r-4\ge 1$.  Then, $V$ contains the
non-degenerate subspace $H_2\perp W$ of dimension $r-2\ge 3$.
So the restriction of $q$ to $V$ has rank at least $3$ and
Witt index at least $1$.  The conclusion follows by applying
Proposition \ref{equiv:prop:evitement}.
\end{proof}

\begin{lemma}
\label{proofv:lemma:recursion}
Suppose that a finite non-empty sequence $(\ux_1,\dots,\ux_k)$ of zeros of $q$
(in $\bQ^{n+1}$) satisfies the following conditions for $i=1,\dots,k$:
\begin{itemize}
 \item[(i)] $\ux_i\notin K$;
 \item[(ii)] $\Qspan{\ux_i,\ux_{i-1}}$ is totally
  isotropic if $i\ge 2$;
 \item[(iii)] $\ux_i \notin \Qspan{\ux_j,\dots,\ux_{i-1}}$
  where $j=\max\{1,i-n\}$ if $i\ge 2$;
 \item[(iv)] $\ux_i\notin \Qspan{\ux_{i-(n-1)},\dots,\ux_{i-1}}^\perp$
  if $i\ge n$.
\end{itemize}
Then there exists a zero $\ux_{k+1}\in\bQ^{n+1}$ of $q$ which satisfies the
same conditions for $i=k+1$.  Moreover, if $\ux_{k+1}$ is such a point, then
$a(\ux_{k+1}+b\ux_k)$ also satisfies these conditions for any $a\in\Qmult$
and any $b\in\bQ$ except possibly for one value of $b$.
\end{lemma}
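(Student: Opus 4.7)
The plan is to construct $\ux_{k+1}$ as a zero of $q$ lying in the hyperplane $V:=\Qspan{\ux_k}^\perp$ and avoiding a short list of proper subspaces dictated by conditions (i), (iii), (iv) for $i=k+1$. Since $q(\ux_{k+1})=q(\ux_k)=0$, condition (ii) is equivalent to $b(\ux_{k+1},\ux_k)=0$, i.e.\ $\ux_{k+1}\in V$; the remaining conditions translate into the avoidance of $K$ (for (i)), of $W_{(\mathrm{iii})}:=\Qspan{\ux_{\max\{1,k+1-n\}},\dots,\ux_k}$ (for (iii)), and, when $k\ge n-1$, of $W_{(\mathrm{iv})}^\perp$ where $W_{(\mathrm{iv})}:=\Qspan{\ux_{k-n+2},\dots,\ux_k}$ (for (iv)). The existence of such a point would then follow from Lemma \ref{proofv:lemma:evitement} applied to the zero $\ux_k\notin K$, provided none of the three forbidden subspaces contains $V$.

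I would verify the non-containment assertions in turn. For $K$: since $K\subseteq V$ and $\dim K\le n-3<n=\dim V$ (using that the rank of $q$ is at least $2m\ge 4$), one has $K\ne V$. For $W_{(\mathrm{iii})}$: if $k<n$ a trivial dimension count suffices; if $k\ge n$, hypothesis (iii) applied inductively to $i=2,\dots,k$ shows that $\ux_{k-n},\dots,\ux_k$ are linearly independent, so $\dim W_{(\mathrm{iii})}=n$, and an equality $W_{(\mathrm{iii})}=V$ would force each generator to lie in $V$, hence $\ux_k$ to be orthogonal to $\ux_{k-n+1},\dots,\ux_{k-1}$, contradicting hypothesis (iv) at $i=k$. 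For $W_{(\mathrm{iv})}^\perp$: a direct computation using $\ux_k\notin K$ identifies $V^\perp=\bQ\ux_k+K$, of dimension $1+\dim K\le n-2$; since $\dim W_{(\mathrm{iv})}=n-1$ (by the same linear-independence argument), the inclusion $W_{(\mathrm{iv})}\subseteq V^\perp$ fails, giving $V\not\subseteq W_{(\mathrm{iv})}^\perp$.

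For the \emph{moreover} assertion, fix a valid $\ux_{k+1}$ and consider the affine line $L:=\ux_{k+1}+\bQ\ux_k$. Since $\ux_k\in V$, we have $L\subseteq V$, so (ii) is automatic; using $q(\ux_k)=0=b(\ux_{k+1},\ux_k)$, every point of $L$ remains isotropic. Since $\ux_k\in W_{(\mathrm{iii})}$ while $\ux_{k+1}\notin W_{(\mathrm{iii})}$, the line $L$ is disjoint from $W_{(\mathrm{iii})}$, preserving (iii) for every $b$. When (iv) applies, $W_{(\mathrm{iv})}^\perp$ is a proper subspace of $\bQ^{n+1}$ not containing $\ux_{k+1}$, so $L\cap W_{(\mathrm{iv})}^\perp$ consists of at most one point; moreover, because $K\subseteq W_{(\mathrm{iv})}^\perp$ (being the radical), every $b$ violating (i) already coincides with the one violating (iv), so (i) introduces no additional exception. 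When (iv) does not apply (the case $k+1<n$), the line $L$ meets the proper subspace $K$ in at most one point since $\ux_k\notin K$. Scaling by $a\in\Qmult$ preserves all four conditions, which concludes the argument.

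The main obstacle is verifying the non-containment $V\not\subseteq W_{(\mathrm{iii})}$ in the case $k\ge n$, where the naive dimension count produces equality and one must invoke hypothesis (iv) at index $k$ precisely to rule out $W_{(\mathrm{iii})}=V$; a secondary difficulty is the rank $4$ case (not covered by Lemma \ref{proofv:lemma:evitement}), which requires a separate construction exploiting the explicit hyperbolic structure of $q$ modulo its kernel.
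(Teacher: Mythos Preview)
Your proposal follows the paper's proof essentially line for line: the same subspaces $V$, $W_1=W_{(\mathrm{iii})}$, $W_2=W_{(\mathrm{iv})}$, the same three non-containment checks (including the key use of hypothesis (iv) at $i=k$ to rule out $V=W_1$), and the same appeal to Lemma~\ref{proofv:lemma:evitement}. Your treatment of the ``moreover'' part is in fact a bit more explicit than the paper's---you spell out why condition (i) never introduces a second exceptional value of $b$ (via $K\subseteq W_{(\mathrm{iv})}^\perp$ when $k\ge n-1$, and via $\ux_k\notin K$ when $k\le n-2$), whereas the paper compresses this into one sentence.

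The one genuine gap is the rank~$4$ case, which you flag but do not carry out. This is not a minor detail: Lemma~\ref{proofv:lemma:evitement} fails there because the restriction $q|_V$ then has rank only $2$, and the zero set of $q$ in $V$ is just a union $U_1\cup U_2$ of two maximal totally isotropic subspaces of dimension $n-1$. The paper's argument shows that at least one of $U_1,U_2$ avoids all of $K$, $W_1$, $W_2^\perp$, by a short but non-obvious case analysis (if $U_1\subseteq W_1$ and $U_2\subseteq W_2^\perp$ then $W_2=U_2\subseteq W_1$, contradicting $U_2\nsubseteq W_1$). Without this paragraph your proof is incomplete for the case $m=2$, rank $4$.
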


In this statement, the properties that matter for us are (i),
(ii) which we motivated above, and the technical condition (iii)
which implies that any $n+1$ consecutive points among
$(\ux_1,\dots,\ux_k)$ are linearly independent over $\bQ$.  However,
it can be checked that, when $k\ge n$, the existence of a point
$\ux_{k+1}\in\bQ^{n+1}$ satisfying (i), (ii) and (iii) for
$i=k+1$ requires that (iv) hold for $i=k$. So, we need (iv) as
well for the recurrence step.

\begin{proof}
Set $V = \Qspan{\ux_k}^\perp$ and
$W_1 = \Qspan{\ux_j,\dots,\ux_{k}}$ where $j=\max\{1,k+1-n\}$.
Set also $W_2 = \Qspan{\ux_{k+2-n},\dots,\ux_{k}}$ if $k\ge n-1$.
We need to show that there exists a zero of $q$ in $V\setminus(K\cup W_1)$
if $k\le n-2$, in $V\setminus(K\cup W_1\cup W_2^\perp)$ if $k\ge n-1$.

We first note that $\dim_\bQ(V)=n$ since $\ux_k\notin K$, that
$\dim_\bQ(W_1)=k-j+1$ by Condition (iii), and that
$\dim_\bQ(W_2)=n-1$ if $k\ge n-1$ by the same condition.  We deduce that
$V\nsubseteq W_1$ because otherwise we would have $k\ge n$ and $V=W_1$
(by comparing dimensions), thus $\ux_k \in W_1^\perp \subseteq \langle
\ux_{k+1-n},\dots,\ux_{k-1}\rangle_\bQ^\perp$, which contradicts
Condition (iv) for $i=k$.  If $k\ge n-1$, we also have $V\nsubseteq W_2^\perp$
because otherwise we would get $W_2\subseteq V^\perp=K+\bQ\ux_k$ which is
impossible since $\dim_\bQ(K)\le n-3$.  For the same reason, we also have
$V\nsubseteq K$.  We conclude that $V \nsubseteq (K\cup W_1)$ if
$k\le n-2$, and $V \nsubseteq (K\cup W_1\cup W_2^\perp)$ if $k\ge n-1$.

By the above, Lemma \ref{proofv:lemma:evitement} yields a zero $\ux_{k+1}$
of $q$ with the requested properties if the rank of $q$ is at least $5$.
Suppose now that $q$ has rank $4$.  Then its Witt index is $m=2$.  Since
$\ux_k$ does not belong to $K$, it belongs to an hyperbolic plane
$H=\Qspan{\ux_k,\uy}$ for some zero $\uy$ of $q$ with $b(\ux_k,\uy)\neq 0$.
Write $\bQ^{n+1} = K \perp H \perp H'$ where $H'=\Qspan{\ux',\uy'}$
is another hyperbolic plane generated by zeros $\ux',\uy'$ of $q$ with
$b(\ux',\uy')\neq 0$.  This decomposition yields $V=K'\perp H'$ where
$K'= K + \bQ\ux_k$ is the kernel of the restriction $q|_V$ of $q$ to $V$.
So, $q|_V$ has rank $2$ and Witt index $1$.  Thus $V$ admits exactly two
maximal totally isotropic subspaces $U_1=K'+\bQ\ux'$ and $U_2=K'+\bQ\uy'$,
of dimension $n-1$, and the set of zeros of $q$ in $V$ is $U_1\cup U_2$.
So, we need to show that $U_1\cup U_2$ is not contained in $K\cup W_1$
if $k\le n-2$, and not contained in $K\cup W_1\cup W_2^\perp$ if
$k\ge n-1$.  For $k\le n-2$, this is clear since $K$ and $W_1$ have
dimensions strictly smaller than $n-1$.  Thus we may assume that
$k\ge n-1$.  Since $U_1+U_2=V$ is not contained in $W_1$ nor in
$W_2^\perp$, there is at most one index $r\in\{1,2\}$ such that
$U_r\subseteq W_1$ and at most one index $s\in\{1,2\}$ such that
$U_s\subseteq W_2^\perp$.  If such $r$ and $s$ exist and are distinct,
we may assume that $r=1$ and $s=2$ by permuting $\ux'$ and $\uy'$ is
necessary. Then, we have $U_1\subseteq W_1$ and $W_2\subseteq U_2^\perp=U_2$,
so $W_2$ and $U_2$ coincide since they have the same dimension $n-1$.
This is impossible because $W_1$ contains $W_2$ but not $U_2$.  Thus either
$r$ or $s$ does not exist or they are equal.  This means that, for some
$t\in\{1,2\}$, we have both $U_t\nsubseteq W_1$ and $U_t\nsubseteq W_2^\perp$,
so $U_t \nsubseteq K\cup W_1\cup W_2^\perp$ (since $\dim_\bQ(K)<n-1$), and we
are done.

Finally, suppose that a zero $\ux_{k+1}$ of $q$ satisfies Conditions
(i) to (iv) for $i=k+1$.  Then the point $a(\ux_{k+1}+b\ux_{k})$ is a zero
of $q$ which satisfies the same conditions for all $a\in\Qmult$ and all $b\in\bQ$
such that $\ux_{k+1}+b\ux_{k}\notin W_2^\perp$ if $k\ge n-1$.  Since
$\ux_{k+1}\notin W_2^\perp$, this excludes at most one value of $b$.
\end{proof}

We conclude with the proof of the following assertion from
Theorem \ref{results:thm2} (v).

\begin{proposition}
Let $\varphi\colon[1,\infty)\to(0,1]$ be a monotonically decreasing
function with
\[
 \lim_{X\to\infty} \varphi(X)=0
 \et
 \lim_{X\to\infty}X\varphi(X)=\infty.
\]
Then there exist
uncountably many points $\xi\in\Zli$ which satisfy $\cD_\xi(X;Z(\bQ))
\le \varphi(X)$ for all sufficiently large $X$.
\end{proposition}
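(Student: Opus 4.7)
The plan is to build a full binary tree of integer sequences $(\ux_k^\epsilon)_{k\ge 1}$ of zeros of $q$, indexed by $\epsilon\in\{0,1\}^{\bN}$, such that each sequence converges in $\bP^n(\bR)$ to a point $\xi^\epsilon\in\Zli$ satisfying the required approximation bound, and distinct branches yield distinct $\xi^\epsilon$.  Lemma \ref{proofv:lemma:recursion} supplies both the legality of the construction and the branching flexibility: at each step $k$, after extracting a candidate $\uv_{k+1}$ from the lemma, we are free to replace it by $a\uv_{k+1}+b\ux_k$ for any $a\in\Qmult$ and any $b\in\bQ$ outside at most one exceptional value.

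First I would fix any base point $\ux_1\in\bZ^{n+1}\setminus K$ with $q(\ux_1)=0$ (which exists because the Witt index $m\ge 2$ yields a hyperbolic plane).  Assume inductively that $\ux_1,\dots,\ux_k\in\bZ^{n+1}$ satisfy conditions (i)--(iv) of Lemma \ref{proofv:lemma:recursion}.  Applying that lemma and clearing denominators, I would extract an integer point $\uv_{k+1}\in\bZ^{n+1}$ such that for all integers $b$ outside at most one value, the choice $\ux_{k+1}:=\uv_{k+1}+b\,\ux_k$ satisfies (i)--(iv) at position $k+1$.  Since $\ux_k$ and $\uv_{k+1}$ span a totally isotropic plane, $\ux_k\wedge\ux_{k+1}=\ux_k\wedge\uv_{k+1}$ depends only on $\ux_1,\dots,\ux_k,\uv_{k+1}$, while $N_{k+1}:=\norm{\ux_{k+1}}\asymp|b|\,N_k$ for $|b|$ large. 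In particular,
\[
 \dist([\ux_k],[\ux_{k+1}])
   \;\asymp\;\frac{\norm{\ux_k\wedge\uv_{k+1}}}{|b|\,N_k^{\,2}}
\]
can be made as small as desired by enlarging $|b|$. The constraints I would impose are
\[
 \dist([\ux_k],[\ux_{k+1}])\le\frac{\varphi(N_{k+1})}{4\,N_k}
 \quad\text{and}\quad
 \dist([\ux_k],[\ux_{k+1}])\le\tfrac{1}{2}\dist([\ux_{k-1}],[\ux_k]);
\]
the first rewrites as $\norm{\ux_k\wedge\uv_{k+1}}\ll N_{k+1}\varphi(N_{k+1})$, satisfiable for all large $|b|$ precisely because the hypothesis $X\varphi(X)\to\infty$ makes the right-hand side grow unboundedly while the left-hand side is a fixed constant.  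Hence the admissible $b$'s form a cofinite subset of $\bZ$, and I would select two of them, $b_k^{(0)}<b_k^{(1)}$ with $b_k^{(1)}\ge 2b_k^{(0)}$, so that the choice $\epsilon_k\in\{0,1\}$ produces a genuine binary branching.

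For each branch $\epsilon$ the geometric decay forces $([\ux_k^\epsilon])_k$ to be Cauchy, hence convergent to some $\xi^\epsilon\in\bP^n(\bR)$.  Continuity of $q_\bR$ gives $\xi^\epsilon\in Z$; condition (iii) implies that any $n+1$ consecutive $\ux_k^\epsilon$'s are linearly independent, so every tail spans $\bQ^{n+1}$, and combined with $D_{\xi^\epsilon}(\ux_k^\epsilon)\to 0$, Lemma \ref{est:lemma:li} yields $\xi^\epsilon\in\Zli$.  Summing the geometrically-decaying distances,
\[
 \dist([\ux_k^\epsilon],\xi^\epsilon)
 \le 2\dist([\ux_k^\epsilon],[\ux_{k+1}^\epsilon])
 \le\frac{\varphi(N_{k+1}^\epsilon)}{2\,N_k^\epsilon},
\]
so $D_{\xi^\epsilon}(\ux_k^\epsilon)\le\tfrac{1}{2}\varphi(N_{k+1}^\epsilon)$.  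For each sufficiently large $X$, picking $k$ with $N_k^\epsilon\le X<N_{k+1}^\epsilon$ and using monotonicity of $\varphi$ then yields
\[
 \cD_{\xi^\epsilon}(X;Z(\bQ))\le D_{\xi^\epsilon}(\ux_k^\epsilon)
 \le\tfrac{1}{2}\varphi(N_{k+1}^\epsilon)\le\varphi(X).
\]

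Finally I would verify that $\epsilon\mapsto\xi^\epsilon$ is injective. If $k$ is the first disagreement index of $\epsilon\ne\epsilon'$, then $\ux_{k+1}^\epsilon-\ux_{k+1}^{\epsilon'}=(b_k^{(\epsilon_k)}-b_k^{(\epsilon'_k)})\ux_k$, so $\norm{\ux_{k+1}^\epsilon\wedge\ux_{k+1}^{\epsilon'}}=|b_k^{(1)}-b_k^{(0)}|\,\norm{\ux_k\wedge\uv_{k+1}}$; the separation $b_k^{(1)}\ge 2b_k^{(0)}$ forces $\dist([\ux_{k+1}^\epsilon],[\ux_{k+1}^{\epsilon'}])$ to be of the same order as $\dist([\ux_k],[\ux_{k+1}^\epsilon])$, while super-geometric decay makes the tails $\dist([\ux_{k+1}^\epsilon],\xi^\epsilon)$ and $\dist([\ux_{k+1}^{\epsilon'}],\xi^{\epsilon'})$ much smaller.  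Hence $\xi^\epsilon\ne\xi^{\epsilon'}$, and the uncountability of $\{0,1\}^{\bN}$ produces the desired uncountable family.  The main obstacle is the simultaneous satisfaction of the two constraints on $b$ (one absolute, ensuring the approximation bound, one relative, ensuring geometric decay) together with the need to leave at least two admissible integer values of $b$ at every large step; the hypothesis $X\varphi(X)\to\infty$ is exactly what guarantees this persistence and keeps the binary tree alive throughout the recursion.
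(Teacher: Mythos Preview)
Your construction follows the same skeleton as the paper's: invoke Lemma~\ref{proofv:lemma:recursion} to obtain a seed $\uv_{k+1}$, then exploit the freedom to replace it by $\uv_{k+1}+b\,\ux_k$ with $|b|$ large, using $X\varphi(X)\to\infty$ to force the approximation constraint.  The convergence, the verification that $\xi\in\Zli$ via Lemma~\ref{est:lemma:li}, and the final bound on $\cD_\xi(X;Z(\bQ))$ all match the paper.  Where you diverge is in the uncountability step: the paper argues by diagonalisation (given any sequence $\xi_1,\xi_2,\dots$ of such points, impose the extra constraint $\dist(\ux_i,\ux_{i-1})<\tfrac{2}{3}\min_{j\le i}\dist(\xi_j,[\ux_{i-1}])$ and check that the new limit avoids them all), whereas you try to build a full binary tree.

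The tree idea is fine in principle, but your injectivity argument does not close with the constants you chose.  With decay factor $\tfrac12$, summing the geometric tail only yields $\dist([\ux_{k+1}^\epsilon],\xi^\epsilon)\le\dist([\ux_k],[\ux_{k+1}^\epsilon])$, not something ``much smaller''.  Hence, to separate the two limits you would need
\[
 \dist([\ux_{k+1}^\epsilon],[\ux_{k+1}^{\epsilon'}])
   >\dist([\ux_k],[\ux_{k+1}^\epsilon])+\dist([\ux_k],[\ux_{k+1}^{\epsilon'}]),
\]
which is impossible by the triangle inequality since both $[\ux_{k+1}^\epsilon]$ and $[\ux_{k+1}^{\epsilon'}]$ lie on the line through $[\ux_k]$.  (Concretely, $\dist([\ux_{k+1}^\epsilon],[\ux_{k+1}^{\epsilon'}])=\frac{|b_k^{(1)}-b_k^{(0)}|\,\norm{\ux_k\wedge\uv_{k+1}}}{N_{k+1}^\epsilon N_{k+1}^{\epsilon'}}$, and with $b_k^{(1)}\ge 2b_k^{(0)}$ this is only comparable to the larger of the two step distances, never larger than their sum.)  The fix is easy but must be made explicit: at step $k+2$ impose the additional sibling-separation constraint
\[
 \dist([\ux_{k+1}^\epsilon],[\ux_{k+2}^\epsilon])
    \le\tfrac{1}{3}\,\dist([\ux_{k+1}^{(0)}],[\ux_{k+1}^{(1)}]),
\]
which is again satisfiable for all large $|b|$ and makes the tree a genuine Cantor system; or equivalently shrink the decay factor and control the ratio $b_k^{(1)}/b_k^{(0)}$ from above as well as below.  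The paper's diagonalisation bypasses this bookkeeping entirely, which is its main advantage; your tree, once repaired, has the merit of exhibiting the continuum of points explicitly.
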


\begin{proof}
Starting with a zero $\ux_1$ of $q$ in $\bZ^{n+1}\setminus K$,
Lemma \ref{proofv:lemma:recursion} allows us to construct recursively
a sequence $(\ux_i)_{i\ge 1}$ of zeros of $q$ in $\bZ^{n+1}$ which satisfy
Conditions (i) to (iv) of that lemma for each $i\ge 1$, such that,
upon setting $X_i=\norm{\ux_i}$ for each $i\ge 1$, we also have
 \begin{itemize}
 \item[(v)] $X_i>X_{i-1}$ when $i\ge 2$;
 \item[(vi)] $\disp \dist(\ux_{i},\ux_{i-1})
   \le \frac{1}{3} \min\big\{ 2X_{i-1}^{-1}\varphi(X_i),\,
          \dist(\ux_{i-1},\ux_{i-2}) \big\}$
   when $i\ge 3$.
\end{itemize}
Indeed, suppose that $\ux_1,\dots,\ux_{i-1}$ have been constructed for
some $i\ge 2$.  Then Lemma \ref{proofv:lemma:recursion} provides a zero
$\ux_i$ of $q$ satisfying Conditions (i) to (iv) of that lemma.  Upon
multiplying it by a suitable positive integer, we may assume that
$\ux_i\in\bZ^{n+1}$.  Let $\tux_i$ denote this particular zero.
By Lemma \ref{proofv:lemma:recursion}, the point $\ux_i=\tux_i+b\ux_{i-1}$
also satisfies these conditions for all but at most one value of $b$.
We find
\[
 \dist(\ux_{i},\ux_{i-1}) = \frac{\norm{\tux_i\wedge\ux_{i-1}}}{X_{i-1}X_i}
\]
with a numerator that is independent of the choice of $b$. As both
$X_i=\norm{\ux_i}$ and $X_i\varphi(X_i)$ go to infinity with $|b|$,
Conditions (v) and (vi) are fulfilled for $|b|$ large enough.

In $\bP^{n}(\bR)$, the image $([\ux_i])_{i\ge 1}$ of such a sequence
converges to a point $\xi$ with
\begin{equation}
 \label{proofv:prop:eq1}
 \dist(\xi,[\ux_{i-1}])
   \le \sum_{j=i}^\infty \dist(\ux_j,\ux_{j-1})
   \le \dist(\ux_i,\ux_{i-1}) \sum_{j=0}^\infty 3^{-j}
   = \frac{3}{2}\dist(\ux_i,\ux_{i-1})
\end{equation}
for each $i\ge 2$.  Since $[\ux_i]\in Z(\bQ)\subset Z$ for each $i\ge 1$
and since $Z$ is a closed subset of $\bP^n(\bR)$, the point $\xi$ belongs
to $Z$.  When $i\ge 3$, Condition (vi) combined with \eqref{proofv:prop:eq1}
yields
\begin{equation}
 \label{proofv:prop:eq2}
 D_\xi(\ux_{i-1})
  = X_{i-1}\dist(\xi,[\ux_{i-1}])
  \le \frac{3}{2}X_{i-1}\dist(\ux_i,\ux_{i-1})
  \le \varphi(X_i).
\end{equation}
In particular, we have $\lim_{i\to\infty} D_\xi(\ux_{i})=0$,
and so Lemma \ref{est:lemma:li} implies that $\xi\in\Zli$
because any $n+1$ consecutive points $\ux_i,\dots,\ux_{i+n}$ form
a basis of $\bQ^{n+1}$. Finally, for any $X\ge X_2$, there
exists an index $i\ge 3$ such that $X_{i-1}\le X < X_i$
and using \eqref{proofv:prop:eq2} we obtain
\[
  \cD_\xi(X;Z(\bQ))
   \le \cD_\xi(X_{i-1};Z(\bQ))
   \le D_\xi(\ux_{i-1})
   \le \varphi(X_i)\le \varphi(X).
\]

Thus, any sequence $(\ux_i)_{i\ge 1}$ as above yields a point
$\xi\in\Zli$ with the requested property.  To show that there
are uncountably many such points, consider any sequence
$\xi_1,\xi_2,\dots$ of these.  Then choose
$(\ux_i)_{i\ge 1}$ in $\bZ^{n+1}$ satisfying Conditions
(i) to (vi) as well as
\begin{equation}
 \label{proofv:prop:eq3}
 \dist(\ux_{i},\ux_{i-1})
   < \frac{2}{3} \min_{1\le j\le i} \dist(\xi_j,[\ux_{i-1}])
\end{equation}
for each $i\ge 2$.  This is possible since none of the points
$\xi_j$ is rational and therefore the right hand
side of the last inequality is non-zero. Let $\xi=\lim_{i\to\infty}[\ux_i]
\in\bP^n(\bR)$.  Combining \eqref{proofv:prop:eq1} and
\eqref{proofv:prop:eq3} for an arbitrary $i\ge 3$, we obtain
\[
 \dist(\xi,[\ux_{i-1}])
   < \min_{1\le j\le i} \dist(\xi_j,[\ux_{i-1}]),
\]
thus $\xi\notin\{\xi_1,\dots,\xi_i\}$.  So $\xi$ does not belong to
the sequence and therefore the set of points constructed in
this way is uncountable.
\end{proof}

%
%

\appendix

\section{An approximation lemma}
\label{sec:appA}

The following lemma is needed in Step 2 of Section \ref{sec:construction}.
It certainly occurs in the literature in various forms.  By lack of
an appropriate reference, we provide a short proof below.

\begin{lemme}
\label{lemme:approx}
Let $V$ be a real or complex inner product space of finite positive
dimension, let $T\colon V\to V$ be a linear operator on $V$, and
let $(\uv_i)_{i\ge 1}$ be a sequence in $V$.  Suppose that the
minimal polynomial of\/ $T$ admits a simple root $\alpha$ with
$|\alpha|>1$ and that any other root $\beta\in\bC$ of this
polynomial either has $|\beta|<1$ or is simple with $|\beta|=1$.
Suppose also that
\[
 \sum_{i=1}^\infty \|\uv_{i+1}-T(\uv_i)\| <\infty
\]
for the norm $\|\ \|$ of $V$.  Then there exists a vector $\uv\in V$
and a constant $C>0$ satisfying $T(\uv)=\alpha\uv$ and
$\|\uv_i-\alpha^i\uv\|\le C$ for each $i\ge 1$.
\end{lemme}

Recall that the minimal polynomial of $T$ is the monic polynomial
$m_T(x)$ of $\bC[x]$ of smallest degree such that $m_T(T)=0$.
It is a divisor of the characteristic polynomial of $T$ with the
same set of roots in $\bC$.

\begin{proof}
Upon extending $T$ by linearity to $\bC\otimes_\bR V$
in the case where $V$ is real, we reduce to the situation
where $V$ is complex.  As $V$ decomposes as a direct sum
of irreducible $T$-invariant subspaces, it suffices
to prove the lemma when $V$ itself is irreducible.
Then $T$ admits a single eigenvalue $\beta$ and its
minimal polynomial is $(x-\beta)^n$ for some $n>0$.
Put
\[
 \uu_1=\uv_1 \et \uu_{k+1}=\uv_{k+1}-T(\uv_k)
 \quad\text{for each $k\ge 1$.}
\]
By hypothesis, the sum $c_1=\sum_{k\ge 1}\|\uu_k\|$
is finite.  Moreover, we have
\begin{equation}
 \label{eq:U_k}
 \uv_i=\sum_{k=1}^i T^{i-k}(\uu_k) \quad (i\ge 1),
\end{equation}
If $|\beta|<1$, the operator norm $\|T^k\|$ tends
to $0$ as $k\to\infty$.  Thus there exists a constant
$c_2\ge 1$ such that $\|T^k\| \le c_2$ for all $k\ge 0$.
If $|\beta|=1$, we have $n=1$ thus $\|T^k\|=1$ for
each $k\ge 0$, and we may simply take $c_2=1$.  In both
cases, this yields
\[
 \|\uv_i\|\le c_2\sum_{k=1}^i \|\uu_k\|\le c_1c_2
 \quad (i\ge 1)
\]
and so the conclusion holds with $\uv=0$.  Finally if $|\beta|>1$,
we have $\beta=\alpha$ and $n=1$.  In this case the series
$\uv=\sum_{k=1}^\infty\alpha^{-k}\uu_k$ converges in $V$.
It satisfies $T(\uv)=\alpha\uv$ since $T=\alpha I$ and,
by \eqref{eq:U_k}, we find as requested
\[
 \|\uv_i-\alpha^i\uv\|
   =\Big\|\sum_{k=i+1}^\infty \alpha^{i-k}\uu_k\Big\|
   \le \sum_{k=i+1}^\infty\|\uu_k\|
   \le c_1
 \quad (i\ge 1).
 \qedhere
\]
\end{proof}


\end{document}